\documentclass[12pt]{amsart}
\usepackage{amsmath,amssymb,amsthm,graphics,amscd,mathrsfs}
\usepackage{amscd, amsfonts, amssymb, amsthm}
\usepackage{mathtools}
\usepackage{fullpage, verbatim}
\usepackage[colorlinks, breaklinks, linkcolor=blue]{hyperref}
\usepackage{array}
\usepackage{latexsym}
\usepackage[shortlabels]{enumitem}
\setlist[enumerate]{label=(\roman*)}
\usepackage[english]{babel}
\usepackage{csquotes}

\usepackage{xcolor}
\usepackage{subcaption}
\usepackage{tikz-cd}
\usepackage{tikz}
\usepackage{cleveref}
\usetikzlibrary{arrows}
\usetikzlibrary{shapes}
\usepackage{tkz-graph}
\usetikzlibrary{decorations}
\usetikzlibrary{arrows,decorations.markings}
\usepackage{multirow}
\tikzstyle{v} = [circle, draw, inner sep=2pt, minimum size=3pt, fill=black]
\tikzstyle{l} = [rectangle, draw, rounded corners]

\usepackage{caption}
\usetikzlibrary{calc}

\newcommand\BBC{{\mathbb C}}
\newcommand\BBH{{\mathbb H}}

\newcommand\BBR{{\mathbb R}}
\newcommand\BBZ{{\mathbb Z}}

\newcommand\Fix{{\operatorname{Fix}}}

\newcommand\GL{\operatorname{GL}}
\newcommand\GU{\operatorname{GU}}

\newcommand\rk{\operatorname{rk}}
\newcommand\SL{\operatorname{SL}}

\newcommand\Sp{\operatorname{Sp}}

\newcommand\diag{{\operatorname{diag}}}
\newcommand\ord{{\operatorname{ord}}}

\newcommand\id{{\operatorname{id}}}

\newcommand\bfi{\mathbf{i}}
\newcommand\bfj{\mathbf{j}}
\newcommand\bfk{\mathbf{k}}
\renewcommand\phi{\varphi}
\renewcommand\epsilon{\varepsilon}

\numberwithin{equation}{section}

\theoremstyle{plain}
\newtheorem{lemma}[equation]{Lemma}
\newtheorem{theorem}[equation]{Theorem}

\newtheorem{prop}[equation]{Proposition}
\theoremstyle{definition}
\newtheorem{defn}[equation]{Definition}
\newtheorem{remark}[equation]{Remark}

\newtheorem{example}[equation]{Example}

\newtheorem{notation}[equation]{Notation}

\subjclass[2010]{}

\begin{document}

\title[On Normalizers of Parabolic Subgroups of Quaternionic Reflection Groups]
{On Normalizers of Parabolic Subgroups of Quaternionic Reflection Groups}

\author[G. Röhrle]{Gerhard Röhrle}
\address
{Fakult\"at f\"ur Mathematik,
Ruhr-Universit\"at Bochum,
D-44780 Bochum, Germany}
\email{gerhard.roehrle@rub.de}

\author[J. Schmitt]{Johannes Schmitt}
\address
{Fakult\"at f\"ur Mathematik,
Ruhr-Universit\"at Bochum,
D-44780 Bochum, Germany}
\email{johannes.schmitt@rub.de}

\allowdisplaybreaks

\begin{abstract}
  By work of Howlett and Muraleedaran--Taylor, a parabolic subgroup of a real or complex reflection group always admits a complement in its normalizer.
  In this note, we investigate this phenomenon for quaternionic reflection groups.
  Here, in contrast to the real and complex setting, we find that complements of parabolic subgroups do not exist in general.
  Indeed, there are infinitely many examples of quaternionic reflection groups in arbitrary rank greater than 2 with a parabolic subgroup that does not admit a complement in its normalizer.
  We give a full classification of parabolic subgroups of irreducible quaternionic reflection groups and describe their complements, if the latter exist.
\end{abstract}

\maketitle

\tableofcontents

\section{Introduction}

For $G$ a finite Coxeter group, the structure of the normalizer $N_G(P)$ of a parabolic subgroup $P$ of $G$ was studied by Howlett \cite{How80} and Brink--Howlett \cite{BH99}.
They in particular show that there is always a complement to $P$ in $N_G(P)$, that is, there is a subgroup \(C\leq N_G(P)\) with \(N_G(P) = P\rtimes C\).
This investigation was extended to complex reflection groups $G$ by Muraleedaran--Taylor \cite{MT18}.
There again a complement to $P$ in $N_G(P)$ always exists.
Owing to Steinberg's theorem, parabolic subgroups of complex reflection groups are themselves complex reflection groups \cite{Ste64}.

Quaternionic reflection groups form a more general class of groups encompassing the class of complex reflection groups \cite{Coh80}.
In this note, we carry out the analogous investigation for the normalizer $N_G(P)$ of a parabolic subgroup $P$ of a quaternionic reflection group $G$.
In \cite[Thm.~1.1]{BST23}, Bellamy, Thiel and the second author showed that the analogue of Steinberg's theorem mentioned above also holds for quaternionic reflection groups.
However, we find that for quaternionic reflection groups a parabolic subgroup need not admit a complement in its normalizer.
Our results may be summarized as follows.

\begin{theorem}
  \label{thm:intro}
  Let \(G\) be an irreducible quaternionic reflection group and let \(P\leq G\) be a parabolic subgroup.
  Then \(P\) has a complement in its normalizer \(N_G(P)\) except if the pair \((G, P)\) is one of the following:
  \begin{enumerate}
    \item \label{thm:intro:1}\(G = G_n(K, H)\) is an imprimitive group with \[(K, H)\in \{(\mathsf D_d, \mathsf C_d), (\mathsf D_d, \mathsf C_{2d}), (\mathsf D_d, \mathsf D_{d/2}), (\mathsf O, \mathsf T)\},\] where \(d\) is even, and \(P = P_\lambda\) with \(\lambda = 1^{b_1}\cdots m^{b_m}\vdash m < n\) and \(\sum_{k\text{ odd}}b_k > n - m\).
    \item \label{thm:intro:2}Five cases among the exceptional groups, namely \[(W(Q), G(4, 2, 2)),\ (W(R), \mathsf C_2),\ (W(S_3), \mathsf C_2),\ (W(U), \mathsf C_2\times \mathsf C_2),\ (W(U), \mathsf C_2\times\mathsf C_2\times\mathsf C_2).\]
  \end{enumerate}
\end{theorem}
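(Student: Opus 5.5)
The theorem is a summary of a classification, so the proof will go through Cohen's classification of irreducible quaternionic reflection groups \cite{Coh80}. We split into three families: those that are (up to conjugacy) complex reflection groups, the imprimitive groups $G_n(K,H)$, and the finitely many primitive exceptional groups.

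\emph{Complex groups.} If $G$ is conjugate in $\GL_n(\BBH)$ to a complex reflection group, its parabolic subgroups are the fixers of subspaces. They correspond to parabolic subgroups of the complex group, and normalizers correspond as well, so \cite{MT18} supplies a complement. This disposes of all $G$ that are not genuinely quaternionic.

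\emph{Imprimitive groups.} Write $G=G_n(K,H)$, with $K$ a finite subgroup of $\BBH^\times$ and $H\trianglelefteq K$ such that $K/H$ is abelian; the diagonal part $A(K,H)$ is extended by $S_n$. I would first classify parabolic subgroups up to conjugacy, using \cite[Thm.~1.1]{BST23} to know that they are generated by reflections. A point-stabilizer decomposes the coordinates into blocks according to a partition $\lambda=1^{b_1}\cdots m^{b_m}\vdash m$ together with $n-m$ free coordinates, which gives the $P_\lambda\cong G_{n-m}(K,H)$-type factor times products of symmetric groups. Next I would compute $N_G(P_\lambda)$. It permutes blocks of equal size and acts diagonally by elements of $K$ on each block. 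It also involves the free part through the determinant-type condition defining $A(K,H)$ (the product of entries lying in $H$ modulo $[K,K]$).

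The natural candidate complement is the block-permuting wreath product $\prod_k (N_K\text{-part})\wr S_{b_k}$. The obstruction comes from blocks of odd size $k$. To stay in $G$, scaling such a block by $x\in K$ must be compensated on the free coordinates, and the compensation requires a central-type lift. The quotient $K/H$ has exponent $2$ issues precisely for $(K,H)\in\{(\mathsf D_d,\mathsf C_d),(\mathsf D_d,\mathsf C_{2d}),(\mathsf D_d,\mathsf D_{d/2}),(\mathsf O,\mathsf T)\}$ with $d$ even. In these cases an element whose square lies in $P$ has no lift of order $2$ outside $P$.

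I would prove non-splitting by exhibiting an element $g\in N_G(P)$ whose image in $N_G(P)/P$ has order $2$ while every element of $gP$ has order $4$. When $\sum_{k\text{ odd}}b_k>n-m$ there are too many odd blocks to absorb the determinant into the free part, and this forces $g^2\in P\setminus\{1\}$ for all choices. In the remaining cases I would write down the complement explicitly. This case analysis, especially pinning down exactly the inequality $\sum_{k \text{ odd}} b_k>n-m$, is the main obstacle.

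\emph{Exceptional groups.} For the finitely many primitive quaternionic groups, I would enumerate parabolic subgroups up to conjugacy by computer (e.g.\ in Magma or GAP), as stabilizers of intersections of reflecting hyperplanes. For each, I would compute $N_G(P)$ and test whether $P$ has a complement, using complement and cohomology routines, i.e.\ whether $H^2(N/P, Z(P))$ classes vanish. This should yield exactly the five listed pairs.
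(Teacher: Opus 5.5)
\textbf{Gap 1: your case split misses two infinite families.} Besides the complex groups, the $G_n(K,H)$ and the $W(X)$, Cohen's list contains two further families.
\begin{itemize}
\item The rank-$2$ imprimitive groups $G(K,H,\phi)$ with $[K,K]\not\le H$. These are not of the form $G_2(K,H)$, so your standing assumption that $K/H$ is abelian excludes them. They include the infinite series $G(\mathsf D_m,\mathsf C_l,\psi_r)$.
\item The primitive groups with imprimitive complexification $E(\mu_d H_0)$, $H_0\in\{G_5,G_7,\dots,G_{22}\}$. These are infinitely many because $d$ varies.
\end{itemize}
So ``the finitely many primitive quaternionic groups'' is false, and no computer enumeration settles these cases. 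They need uniform arguments, which the paper supplies:
\begin{itemize}
\item For $G(K,H,\phi)$, $P_{(1)}=H\times 1$ has a complement iff $\phi$ lifts to a homomorphism $K\to K$, which is checked for every entry of Table~\ref{tab:imprimrank2}. The normalizer of $P_{(2)}^\alpha$ is the centralizer of its generating involution, which gives an explicit complement.
\item For $E(H)$, a type (a) parabolic gets a complement in $N_H(P)$ from \cite{MT18}. This complement is enlarged by some $gs\in N_G(P)\setminus H$, using $(gs)^2=-I_2$ (Lemma~\ref{lem:gscomp}).
\item For a type (b) parabolic one has $N_G(\langle(\zeta I_2)s\rangle)=\langle(\zeta I_2)s\rangle\rtimes H_1$, where $H_1$ is the determinant-one subgroup of $H$.
\end{itemize}
You also never treat the parabolics $P_\lambda^\alpha$ with $\lambda\vdash n$ in $G_n(K,H)$, which must be shown to split.

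\textbf{Gap 2: the proposed non-splitting mechanism does not exist.} Take $G=G_3(\mathsf D_d,\mathsf C_{2d})$ with $d$ even and $\lambda=(1,1)$. This case is on the exceptional list, since $b_1=2>1=n-m$.
\begin{itemize}
\item Here $P=H\times 1\times 1$, $N_G(P)=\{\diag(a,A): a\in K,\ A\in K\wr S_2,\ a\,\delta_2(A)\in H\}$, and $N_G(P)/P\cong\mathsf D_d\wr S_2$.
\item Since $-1$ is the only involution of $\mathsf D_d$ and $-1\in H$, every involution of the quotient has the form $((\pm1,\pm1),\id)$ or $((x,x^{-1}),(1\,2))$. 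Each has $\delta_2=1$, so each lifts to the involution $\diag(1,A)$.
\end{itemize}
Hence no coset $gP$ consists of elements of order $4$, and the obstruction is invisible on single elements. It is global instead. A complement amounts to a homomorphism $\Gamma_\lambda\to G_{n-m}(K,K)$ compatible with the $\delta$'s (Proposition~\ref{prop:compviadiagram}). One must then show that no homomorphism $K^p\to G_q(K,K)$ with $\delta_q\circ\phi=\delta_1^p$ exists when $q<p$. In the example this says that two homomorphisms $K\to K$ with commuting images cannot both induce the identity on $K/H$.

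The paper proves this in Lemma~\ref{lem:nosuchmap}. It tracks the images of $\zeta_{2d}\bfj$ and $\bfi$ from each factor. It shows each factor needs a coordinate $j$ fixed by both permutation parts and carrying an entry $\equiv\zeta_{2d}\bfj$. The pigeonhole principle then puts two factors on the same $j$, and commutation between factors gives $-1=y_j^2=1$.

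For the converse you also need an actual construction:
\begin{itemize}
\item when $\sum_{k\text{ odd}}b_k\le n-m$, embed $\prod_{k\text{ odd}}G_{b_k}(K,K)$ diagonally into $G_{n-m}(K,K)$;
\item for the other pairs, use a section of $K\to K/H$ or of $K\to K/[K,K]$.
\end{itemize}
Note that ``$K/H$ of exponent $2$'' is not the dividing line. The pair $(\mathsf D_d,\mathsf C_{2d})$ with $d$ odd has $K/H\cong\mathsf C_2$, yet it always splits, because $[K,K]$ has a complement in $K$.
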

The theorem is proved in a case by case fashion in Theorem~\ref{thm:imprim} and Propositions~\ref{prop:dihedralparasrank2}, \ref{prop:imprimrank2excep}, \ref{prop:compp1}, \ref{prop:compp2} and \ref{prop:primprim}.
For the notation used in Theorem~\ref{thm:intro}, see Section~\ref{sec:classify} and the following sections.
Observe that the groups in \ref{thm:intro:1} constitute infinitely many groups \(G\) in arbitrary rank greater or equal 3.

\begin{remark}
  Let \(P\leq G\) be a parabolic subgroup and assume that \(P\) admits a complement \(C\) in the normalizer \(N_G(P)\).
  As for real and complex reflection groups, the group \(C\) acts as a reflection group on the fixed space \(\Fix_V(P)\) in many cases.
  If this is not the case, then \(P\) is among the parabolic subgroups in Theorem~\ref{thm:imprim}~\ref{thm:imprim:2} or \(G\) is one of the exceptional groups, see the tables in Section~\ref{sec:tabsprim}.
\end{remark}

\begin{remark}
  \label{rem:namikawaweyl}
  The description of the normalizers of parabolic subgroups in this article has the following application.
  Given a quaternionic reflection group \(G\leq \GL_n(\BBH)\), there is a faithful embedding of \(G\) into \(\Sp_{2n}(\BBC)\), the symplectic group over \(\BBC\).
  The corresponding linear quotient \(X = \BBC^{2n}/G\) is a \emph{symplectic quotient singularity} \cite{Bea00}.
  As such, \(X\) has a rich birational geometry and one important invariant in this context is the \emph{Namikawa--Weyl group} \cite{Nam10}, which describes isomorphisms between different birational models of \(X\).
  The Namikawa--Weyl group of \(X\) is closely related to the minimal parabolic subgroups of \(G\) and their normalizers, see \cite{Bel16}, \cite{BST18} and the references therein for details.
  Building on this article, we give a complete description of the Namikawa--Weyl groups of symplectic quotient singularities by symplectic reflection groups in \cite{BRS26}.
\end{remark}

The manuscript is organized as follows.
After some preliminaries in Section~\ref{sec:prelim}, we follow the classification of quaternionic reflection groups from \cite{Coh80}.
That is, we consider the imprimitive groups in Section~\ref{sec:imprim}, the primitive groups with imprimitive complexification in Section~\ref{sec:primimprim} and finally the primitive groups with primitive complexification in Section~\ref{sec:primprim}.
To be able to prove Theorem~\ref{thm:intro}, we give complete lists of parabolic subgroups for all irreducible quaternionic reflection groups.

\subsection*{Acknowledgements}
We thank Gwyn Bellamy for pointing out the connection to the Namikawa--Weyl group to us (Remark~\ref{rem:namikawaweyl}).

\section{Preliminaries}
\label{sec:prelim}

\subsection{Parabolic subgroups}
Let \(\BBH\) be the skew-field of quaternions.
We write \(\{1, \bfi, \bfj, \bfk\}\) for the standard basis of \(\BBH\) over \(\BBR\) with \(\BBC = \BBR(\bfi)\).
Let \(V\) be a finite-dimensional right \(\BBH\)-vector space.
Let \(\GL(V)\) be the group of all invertible linear transformations of \(V\).
We agree that \(\GL(V)\) acts on \(V\) from the left.

\begin{defn}
  An element \(g\in \GL(V)\) of finite order is a \emph{quaternionic reflection} (or just \emph{reflection}), if \(\rk(1 - g) = 1\), that is, \(g\) fixes a subspace of codimension 1 in \(V\).
  A finite group \(G\leq\GL(V)\) is a \emph{quaternionic reflection group}, if \(G\) is generated by quaternionic reflections.
\end{defn}

One may consider the quaternionic vector space \(V\) as a complex representation of \(G\) by restriction of scalars.
This gives rise to an operation called \emph{complexification} in \cite{Coh80} which embeds \(G\) into the symplectic group \(\Sp(V|_\BBC)\).
By this point of view, quaternionic reflection groups are also called \emph{symplectic reflection groups}.

\begin{defn}
  Let \(G\leq\GL(V)\) be a quaternionic reflection group. Let $X \subseteq V$. The pointwise stabilizer of $X$ in $G$, $G_X = \{g \in G \mid g \cdot x = x \ \text {for every } x \in X \}$, is called a \emph{parabolic subgroup} of $G$.
\end{defn}

The following is the main result from \cite[Thm.~1.1]{BST23}, generalizing Steinberg's theorem
for complex reflection groups \cite{Ste64} to the setting of quaternionic reflection groups.

\begin{theorem}
  \label{thm:steinberg}
  Let \(G\leq\GL(V)\) be a quaternionic reflection group and let $P$ be a parabolic subgroup of $G$. Then $P$ is itself a quaternionic reflection group, generated by the reflections it contains.
\end{theorem}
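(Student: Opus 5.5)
The plan is to reduce to stabilizers of single vectors and then verify reflection generation along Cohen's classification \cite{Coh80}; I do not see a uniform invariant-theoretic argument, and Section~\ref{sec:classify} suggests the paper also works through that classification.

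\textbf{Reduction to one vector.} Let \(X\subseteq V\). The pointwise stabilizer depends only on the \(\BBH\)-span of \(X\), so fix a basis \(x_1,\dots,x_k\) of that span. Then
\[
G_X=\bigl(\cdots\bigl(G_{x_1}\bigr)_{x_2}\cdots\bigr)_{x_k}.
\]
Suppose we know that for every quaternionic reflection group \(G\) and every \(v\in V\), the stabilizer \(G_v\) is generated by reflections. Apply this first to \(G\) and \(x_1\), then to the reflection group \(G_{x_1}\) and \(x_2\), and so on; each step needs the previous stabilizer to be a reflection group. Any reflection of \(G_{x_1}\) is a reflection of \(G\), so the final group is generated by reflections of \(G\) fixing \(X\).

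A quaternionic reflection group is a direct product of irreducible ones acting on an orthogonal decomposition of \(V\) with respect to an invariant hermitian form. The stabilizer of \(v\) is then the product of the stabilizers of its components, so we may also assume \(G\) irreducible.

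\textbf{Reduction to flats.} Let \(R\le G_v\) be the subgroup generated by the reflections of \(G\) fixing \(v\), and let \(F\) be the intersection of their reflecting hyperplanes. Then \(v\in F\), and \(G_v\) stabilizes \(F\), since \(G_v\) permutes those reflections by conjugation. It suffices to show \(G_F=R\) and \(G_v=G_F\).

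For the second equality, the natural tool is a positivity (Cartan-type) argument. Use the invariant hermitian form to show that if \(g\in G_v\) moves some point of \(F\), one can find a reflection in \(G_v\) whose hyperplane does not contain \(F\). This contradicts the definition of \(F\). Steinberg's lemma for real groups works like this, with \(G\)-orbit averaging replacing chambers.

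I am not confident that this step works uniformly over \(\BBH\), because there is no chamber structure. That is why I would fall back on the classification.

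\textbf{Classification check.} For the imprimitive groups \(G_n(K,H)\), with \(H\trianglelefteq K\) finite subgroups of \(\BBH^\times\) and \(G=H^n\rtimes\ldots\) extended by \(K\)-diagonal-type elements and \(S_n\), compute \(G_v\) explicitly.
\begin{itemize}
\item Group the coordinates of \(v\) into blocks: zero coordinates, and nonzero coordinates related to each other by multiplication by elements of \(K\).
\item Check that \(G_v\) is generated by transpositions-type reflections inside blocks together with \(G_m(K,H)\) acting on the zero block.
\item The delicate point is the ``determinant'' condition defining \(G_n(K,H)\) inside \(K\wr S_n\). Here one must check that on the zero block the diagonal part is still realized by reflections. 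This works because \(K/H\) is abelian and the relevant twisting can be absorbed by diagonal reflections \(\diag(k,1,\dots,1)\) with \(k\in K\), which fix \(v\) when the corresponding coordinate is zero.
\end{itemize}

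For the finitely many primitive groups, the check is finite: enumerate the reflections and the intersections of their hyperplanes. For each flat \(F\), compare \(|G_F|\) with the order of the subgroup generated by reflections containing \(F\), and compare \(G_F\) with \(G_v\) for \(v\in F\) generic. This can be done by computer, using that there are only finitely many \(G\)-orbits of flats.

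\textbf{Main obstacle.} I expect the hardest part to be the equality \(G_v=G_F\), i.e.\ ruling out elements fixing \(v\) but acting nontrivially on \(F\). Outside the explicit imprimitive computation, I would handle this by the exhaustive finite check rather than by a conceptual argument.
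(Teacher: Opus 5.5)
The paper does not prove this itself; it quotes \cite[Thm.~1.1]{BST23}, which is proved by running through Cohen's classification. So your plan is the right kind of argument: reduce to single vectors and to irreducible $G$, then check case by case. Both reductions are fine. The case check, however, has holes.

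\emph{Your list of groups is incomplete.} The primitive groups are not finitely many: the groups $E(H)$ of Section~\ref{sec:primimprim} form an infinite family. Also, the rank-$2$ imprimitive groups $G(K,H,\phi)$ with $[K,K]\not\leq H$, such as the series $G(\mathsf D_m,\mathsf C_l,\psi_r)$, are not of the form $G_n(K,H)$, so neither of your cases covers them. All of these have rank $2$. There the statement is trivial, but you must say so: for $v\neq 0$, every $1\neq g\in G_v$ fixes the line $v\BBH$, so $\rk(1-g)=1$ and $g$ is a reflection. This is the argument used in the proofs of Lemmas~\ref{lem:imprimrank2} and~\ref{lem:paratype}.

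\emph{Your imprimitive ``delicate point'' uses elements not in $G$.} The element $\diag(k,1,\dots,1)$ lies in $G_n(K,H)$ only for $k\in H$. In fact nothing needs absorbing. Write the nonzero coordinates of a block as $v_i=a_iw$ with $a_i\in K$, and use the convention $(gv)_i=x_iv_{\sigma^{-1}(i)}$. Then an element $((x_1,\dots,x_n),\sigma)$ of $G_v$ has forced entries $x_i=a_ia_{\sigma^{-1}(i)}^{-1}$ on that block, and their product lies in $[K,K]\leq H$. Hence $G_v$ is the direct product of $G_{|Z|}(K,H)$ on the zero coordinates with the twisted symmetric groups of the blocks. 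Each factor is generated by reflections fixing $v$.

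\emph{The more serious gap is your finite check for the primitive groups.} It does not test the equality $G_v=G_F$ that you yourself identify as the crux. For $v$ generic in a flat $F$ one \emph{always} has $G_v=G_F$. Indeed, each $\Fix(g)\cap F$ with $F\not\subseteq\Fix(g)$ is a proper subspace of $F$, and finitely many of these cannot cover $F$. The dangerous vectors are non-generic ones: $v$ whose smallest flat is $F$ but which lie in $\Fix(g)$ for some $g\notin G_F$. Such $v$ exist exactly when some $\Fix(g)$ is not a flat. Comparing $|G_F|$ with the order of the reflection subgroup, run over flats only, never detects such a $g$.

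A correct finite criterion is this: $G$ satisfies the theorem if and only if every $g\in G$ lies in the subgroup generated by the reflections $r\in G$ with $\Fix(r)\supseteq\Fix(g)$. If this holds, any $g\in G_X$ is a product of reflections fixing $X$, since $\Fix(g)\supseteq X$. Conversely, apply the theorem to $X=\Fix(g)$. The check runs over conjugacy classes of elements, i.e.\ over the fixed spaces of all elements rather than the flats of the reflection arrangement. It is what must be verified for $W(Q)$, $W(R)$, $W(S_1)$, $W(S_2)$, $W(S_3)$, $W(T)$ and $W(U)$. The complex groups in the list are covered by Steinberg's theorem \cite{Ste64}.
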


\subsection{Reduction to the irreducible case}
We call a quaternionic reflection group \(G\leq\GL(V)\) \emph{(quaternionic) irreducible}, if there is no \(G\)-invariant decomposition \(V = V_1\oplus V_2\) into right \(\BBH\)-vector spaces with \(V_i \neq \{0\}\).
Otherwise, the group \(G\) is called \emph{(quaternionic) reducible}.

Let \(G\) be a group acting reducibly on \(V = V_1\oplus V_2\) as \(G = G_1\times G_2\) with \(G_i\leq\GL(V_i)\) and let \(P\leq G\) be a parabolic subgroup.
Then \cite[Thm.~2.1]{MT18} carries over to the quaternionic case verbatim.
That is, we have \(P = P_1\times P_2\) with \(P_i = (G_i)_{U_i}\) where \(U_i = \Fix_{V_i}(P)\) and \(N_G(P) = N_{G_1}(P_1)\times N_{G_2}(P_2)\).
Further, \(P\) has a complement in \(N_G(P)\) if and only if \(P_1\) and \(P_2\) have complements in \(N_{G_1}(P_1)\) and \(N_{G_2}(P_2)\), respectively.
We may hence restrict our attention to quaternionic irreducible groups.

\subsection{Complex reflection groups}
Let \(G\leq \GL(W)\) be an irreducible complex reflection group for a complex vector space \(W\).
Then we may consider \(G\) as an irreducible quaternionic reflection group acting on \(W\otimes_\BBC\BBH\) by extension of scalars.
The action of \(G\) on \((W\otimes_\BBC\BBH)|_\BBC\) is (complex) reducible and we consequently may call a complex reflection group considered as a quaternionic group a \emph{complex reducible} quaternionic reflection group.

A parabolic subgroup \(P\) of \(G\) as a complex reflection group is the same thing as a parabolic subgroup viewed as a quaternionic reflection group, see also the proof of \cite[Prop.~3.3]{BST23}.
By Muraleedaran--Taylor \cite{MT18}, the group \(P\) has a complement in its normalizer \(N_G(P)\) and this fact is again independent of whether we consider \(G\) as a complex or quaternionic group.
Hence, from now on, we only consider quaternionic reflection groups \(G\leq\GL(V)\) that act irreducibly on \(V|_\BBC\).

\subsection{The classification of quaternionic reflection groups}
\label{sec:classify}
The irreducible quaternionic reflection groups are classified by Cohen \cite{Coh80} with recent amendments by Taylor \cite{Tay25} and Waldron \cite{Wal25}.
We give a short overview of this classification; more details follow in the next sections.

As explained above, the irreducible complex reflection groups as classified by Shephard and Todd \cite{ST54} are naturally included in Cohen's classification.
The remaining irreducible quaternionic reflection groups are divided in \emph{imprimitive} and \emph{primitive} groups.
Here, the group \(G\) is called imprimitive if there is a decomposition \(V = V_1\oplus \cdots\oplus V_k\), \(k \geq 2\), into non-trivial \(\BBH\)-spaces \(V_i\) such that the action of every \(g\in G\) on \(V\) permutes the summands \(V_i\).
The primitive groups do not admit such a decomposition.
The imprimitive groups consist mostly of wreath products \(K\wr S_n\), where \(K\leq \BBH^\times\) is a finite group, and normal subgroups of such products, as a direct generalization of the situation for complex reflection groups.
However, for \(\dim(V) = 2\) there are additional imprimitive groups that do not fit into this pattern.
We consider the imprimitive groups in Section~\ref{sec:imprim}.

The primitive groups are further divided depending on whether \(G\) acts primitively on the complex space \(V|_\BBC\) or not.
We say that \(G\) is a group with \emph{(im)primitive complexification}, respectively.
The primitive groups with imprimitive complexification are of rank at most 2 and there are infinitely many of such; we study these groups in Section~\ref{sec:primimprim}.
The remaining groups with primitive complexification are 16 groups in rank 1 to 5.
These groups are discussed in Section~\ref{sec:primprim}.

\subsection{The Kleinian groups}
Although the study of parabolic subgroups is uninteresting for the reflection groups of rank 1, they are important for what follows as they often appear as building blocks of the groups of higher rank.
The reflection groups of rank 1 are the finite subgroups of \(\BBH^\times\), that is, the well-known Kleinian groups.
We list them here in detail to establish notation and fix generators which we require for computations in the next sections.
\begin{notation}
  \label{not:kleinian}
  Every finite subgroup of \(\BBH^\times\) is \(\BBH^\times\)-conjugate to one of the following.
  \begin{enumerate}
    \item The cyclic groups \(\mathsf C_d = \langle \zeta_d\rangle\) with \(d\geq 1\) and \(\zeta_d\) a primitive \(d\)-th root of unity in \(\BBC^\times\).
    \item The binary dihedral groups \(\mathsf D_d = \langle \zeta_{2d}, \bfj\rangle\) with \(d\geq 2\).
    \item The binary tetrahedral group \(\mathsf T = \langle \mathsf D_2, \omega\rangle\) where \(\omega = \frac{1}{2}(-1 + \bfi + \bfj + \bfk)\in \BBH^\times\) is an element of order 3.
    \item The binary octahedral group \(\mathsf O = \langle \mathsf D_4, \omega\rangle\) with \(\omega\) as above.
    \item The binary icosahedral group \(\mathsf I = \langle \mathsf D_2, \sigma\rangle\) with \(\sigma = \frac{1}{2}(\tau^{-1} + \bfi + \tau \bfj)\in\BBH^\times\) an element of order 5, where \(\tau = \frac{1}{2}(1 + \sqrt{5})\).
  \end{enumerate}
\end{notation}
Note that the quaternion group \(Q_8\) is identical to \(\mathsf D_2\) in this list.
With the above terminology, the cyclic groups are complex reducible, the binary dihedral groups are primitive with imprimitive complexification and the remaining groups are primitive with primitive complexification.

\section{Imprimitive quaternionic reflection groups}
\label{sec:imprim}

Let \(V\) be a finite-dimensional right vector space over \(\BBH\) of dimension \(n\geq 2\) and let \(G\leq \GL(V)\) be a reflection group.
Recall that \(G\) is called \emph{imprimitive} if there is a decomposition \(V = V_1\oplus \cdots\oplus V_k\), \(k \geq 2\), into non-trivial \(\BBH\)-spaces \(V_i\) such that the action of every \(g\in G\) on \(V\) permutes the summands \(V_i\).
By \cite[Thm.~2.9]{Coh80}, the irreducible, imprimitive quaternionic reflection groups of rank at least 3 are given by normal subgroups of certain wreath products.
More precisely, let \(K, H\leq \BBH^\times\) be finite groups with \[[K,K]\leq H\trianglelefteq K,\] where \([K,K]\) denotes the commutator subgroup of \(K\).
Let \[A_n(K, H) := \left\{\left(\begin{smallmatrix}x_1&&\\&\ddots&\\&&x_n\end{smallmatrix}\right)\;\middle|\;x_i\in K,\ x_1\cdots x_n\in H\right\}\leq\GL_n(\BBH).\]
Then every irreducible, imprimitive quaternionic reflection group \(G\) of rank \(n\) is conjugate in \(\GL(V)\) to a group of the form \[G_n(K, H) := A_n(K, H) \rtimes S_n,\] where the symmetric group \(S_n\) acts on an element of \(A_n(K, H)\) by permuting the entries on the diagonal in the natural way.
The group \(G_n(K, H)\) is a normal subgroup of the wreath product \(G_n(K, K) = K\wr S_n\).
To occasionally simplify the notation, we allow \(n = 0\) and denote by \(G_0(K, H)\) the trivial group acting on \(V = \{0\}\).

The options for the pairs \((K, H)\) are as follows \cite[Ch.~20]{DuV64}:
\begin{enumerate}
  \item \(K = \mathsf C_d\) and \(H = \mathsf C_e\) with \(e \mid d\);
  \item \(K = \mathsf D_d\) and \(H \in \{\mathsf C_d, \mathsf C_{2d}, \mathsf D_d\}\); if \(d\) is even, we also have \(H = \mathsf D_{d/2}\);
  \item \(K = \mathsf T\) and \(H\in\{\mathsf D_2, \mathsf T\}\);
  \item \(K = \mathsf O\) and \(H\in\{\mathsf T, \mathsf O\}\);
  \item \(K = \mathsf I\) and \(H = \mathsf I\).
\end{enumerate}
For \(K = \mathsf C_d\) cyclic, the group \(G_n(\mathsf C_d, \mathsf C_e)\) can be identified with a complex reflection group; we have the equality \(G_n(\mathsf C_d, \mathsf C_e) = G(d, d/e, n)\) with the notation from \cite{ST54}.
For \(n = 2\), there are further imprimitive groups in addition to the groups \(G_2(K, H)\); we consider these groups in Section~\ref{sec:imprimexcep}.

\subsection{Complements of parabolic subgroups}
In the following, we fix subgroups \(K, H\leq \BBH^\times\) with \([K, K]\leq H\leq K\) as well as \(n\geq 2\).
Let \(G = G_n(K, H)\).

The parabolic subgroups of \(G\) are described in \cite[Sect.~4]{GRS25}; we summarize the results.
We construct representatives of the conjugacy classes of parabolic subgroups as follows.
Let \(e_1,\dots, e_n\) be the standard basis of \(\BBH^n\).
Let \(m\in\{1,\dots, n\}\) and let \(\lambda = (\lambda_1,\dots,\lambda_k)\) be a partition of \(m\).
Write \(n_0 = n - m\) and let \(\alpha\in K\).
Define \[P_\lambda^\alpha := P_0 \times P_1 \times \cdots \times P_k,\] where \(P_0\) is the quaternionic reflection group \(G_{n_0}(K, H)\) acting on the space spanned by \(\{e_1,\dots, e_{n_0}\}\), \(P_1\) is the group \(S_{\lambda_1}\) permuting the vectors \(\{\alpha e_{n_0 + 1}, e_{n_0 + 2},\dots, e_{n_0 + \lambda_1}\}\) and, for \(2\leq i\leq k\), \(P_i\) is the group \(S_{\lambda_i}\) permuting the vectors \(\{e_j\mid \sum_{k = 1}^{i - 1}\lambda_k < j \leq \sum_{k = 1}^i\lambda_k\}\).
If \(\alpha = 1\), we abbreviate \(P_\lambda := P_\lambda^1\).
By \cite[Thm.~4.12]{GRS25}, any parabolic subgroup of \(G\) is conjugate to either \(P_\lambda\) with \(\lambda \vdash m < n\) or \(P_\lambda^\alpha\) with \(\lambda \vdash n\).

If \(H = \{1\}\) is trivial, then \(K\) is cyclic, because \([K, K]\leq H\) (compare Notation~\ref{not:kleinian}), and then \(G\) is a complex reflection group.
In the next lemma, we may thus assume that \(H\neq \{1\}\).
\begin{lemma}
  \label{lem:ordersimprim}
  Let \(m \leq n\) and \(\lambda = 1^{b_1}2^{b_2}\cdots m^{b_m} \vdash m\).
  Let \(k = b_1 + \cdots + b_m\) and \[b_\lambda := b_1!b_2!\cdots b_m!(1!)^{b_1}(2!)^{b_2}\cdots (m!)^{b_m}.\]
  Assume \(H\neq \{1\}\).
  \begin{enumerate}
    \item\label{lem:ordersimprim:1} For fixed \(\lambda\), the union of the conjugacy classes of the parabolic subgroups of the form \(P^\alpha_\lambda\) contains \[\frac{n!|{K}|^{m - k}}{(n - m)!b_\lambda}\] groups.
    \item\label{lem:ordersimprim:2} For \(P = P_\lambda^\alpha\) and \(N = N_G(P)\) we have
      \begin{align*}
        |{N/P}| = \begin{cases}
          \prod_{i = 1}^m(|{K}|^{b_i}b_i!), & m < n, \\
          \frac{e}{[K:H]}\prod_{i = 1}^m(|{K}|^{b_i}b_i!), & m = n,
        \end{cases}
      \end{align*}
      with
      \begin{align*}
        e = \begin{cases}
          \gcd([K:H],\lambda_1,\dots, \lambda_k), & K/H\text{ is cyclic},\\
          [K:H], & K/H\text{ is not cyclic and }2\mid \gcd(\lambda_1,\dots,\lambda_k),\\
          1, & \text{otherwise,}
          \end{cases}
      \end{align*}
      where \(\lambda = (\lambda_1,\dots, \lambda_k)\).
  \end{enumerate}
\end{lemma}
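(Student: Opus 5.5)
Both parts reduce to understanding the fixed space of \(P = P_\lambda^\alpha\) and its stabilizer in \(G\). We first identify \(P\) with the pointwise stabilizer of \(X := \Fix_V(P)\), and then use \(N_G(P) = \operatorname{Stab}_G(X)\). Every conjugate of \(P\) is a parabolic subgroup \(G_{gX}\), and the map \(P\mapsto \Fix_V(P)\) is injective on parabolic subgroups. Hence counting conjugates of the \(P^\alpha_\lambda\) amounts to counting the subspaces \(gX\), and \(|N/P| = |\operatorname{Stab}_G(X)|/|G_X|\).

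\emph{Fixed space.} For each part \(\lambda_i\) acting on a block of basis vectors, the fixed space is the line spanned by the sum of the vectors in the block (twisted by \(\alpha\) in the first block). The block \(P_0 = G_{n_0}(K,H)\) fixes nothing, since \(H\neq\{1\}\) and \(G_{n_0}(K,H)\) is irreducible or of rank \(\leq 1\) with \(H\neq 1\) acting nontrivially. So \(X\) has dimension \(k\) and is spanned by vectors \(v_i\), each supported on a block of size \(\lambda_i\).

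\emph{Part (i).} The image \(gX\) is determined by three pieces of data:
\begin{itemize}
\item the ordered choice of the \(m\) moved coordinates, partitioned into blocks of the shapes prescribed by \(\lambda\);
\item for each block, the coefficients of \(v_i\) up to a common right scalar;
\item the \(n-m\) untouched coordinates.
\end{itemize}
Since \(m<n\), or more generally since we take the union over all \(\alpha\), the coefficients range over \(K^{\lambda_i}\) modulo an overall right multiplication. Diagonal entries with product in \(H\) can be adjusted freely on the untouched coordinates, or via \(\alpha\) when \(m=n\). Each block therefore contributes \(|K|^{\lambda_i-1}\) choices. The number of set partitions of an \(m\)-subset of \(\{1,\dots,n\}\) of type \(\lambda\) is \(\frac{n!}{(n-m)!\,b_\lambda}\). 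Multiplying gives the stated count \(\frac{n!|K|^{m-k}}{(n-m)!b_\lambda}\). We must check that conjugation realizes all these choices, using the fact that \(S_n\) and \(A_n(K,H)\) act, with the untouched block absorbing the determinant-type constraint. The point of taking the union over \(\alpha\) is exactly that when \(m=n\) the \(H\)-constraint splits the orbit into several classes.

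\emph{Part (ii), case \(m<n\).} We compute \(|N| = |G|\cdot |P|/(\#\text{conjugates})\) directly. The stabilizer of \(X\) consists of two kinds of elements. The first permutes blocks of equal size, giving \(\prod b_i!\) choices, and may permute within blocks, which lies in \(P\). The second scales each \(v_i\) by an element of \(K\): an element \(x\in K\) acting diagonally by \(x\) on block \(i\) sends \(v_i\mapsto v_i x\). Such scalings with arbitrary \(x_i\in K\) are possible because any mismatch of the product in \(H\) can be compensated on the \(n_0\geq 1\) untouched coordinates by an element of \(P_0\)-type. Modulo \(P\), this gives \(\prod |K|^{b_i} b_i!\).

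\emph{Part (ii), case \(m=n\).} Here there is no free coordinate, so the scaling \((x_1,\dots,x_k)\) together with a block permutation \(\pi\) is allowed exactly when the product of the diagonal entries lies in \(H\). This product is \(\prod x_i^{\lambda_i}\) up to ordering, times the sign-free permutation contribution. Modulo \(H\) we are asking for the size of the image of the map \(K^k\to K/H\), \((x_i)\mapsto \prod x_i^{\lambda_i}H\). Note that \(K/H\) is abelian because \([K,K]\le H\). If \(K/H\) is cyclic of order \(c=[K:H]\), the image is the subgroup of index \(\gcd(c,\lambda_1,\dots,\lambda_k)\), so the kernel has proportion \(e/c\) with \(e=\gcd\). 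If \(K/H\) is not cyclic, the list of pairs shows \(K/H\cong \mathsf C_2\times\mathsf C_2\), namely in the cases \(\mathsf D_d/\mathsf C_d\) and \(\mathsf D_d/\mathsf D_{d/2}\) with \(d\) even. In that case squaring is trivial, so the image is trivial when all \(\lambda_i\) are even, giving \(e=[K:H]\), and is all of \(K/H\) otherwise, giving \(e=1\). This yields the factor \(e/[K:H]\).

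\emph{Main obstacle.} The delicate step is the case \(m=n\). We must verify that the permutation part of the normalizer (block permutations combined with reorderings) does not contribute an extra element of \(K/H\), since the products are noncommutative but \(K/H\) is abelian. We must also check that the count is independent of \(\alpha\). I would handle both by working in the abelianization \(K/H\), where order does not matter and \(\alpha\) cancels after conjugation.
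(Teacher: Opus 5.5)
Your proposal is correct. For part (i) it is essentially the standard count the paper cites from \cite{MT18} and \cite{OT92}: you count the possible fixed spaces $gX$, and the untouched coordinate (when $m<n$) or the twist $\alpha$ (when $m=n$) absorbs the condition $\prod g_j\in H$.

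For part (ii) you take a genuinely different route. The paper never describes $N$ at this point. It applies orbit--stabilizer, $|N| = |G|/|P^G|$, using three inputs: the explicit orders of $G$ and $P$, the total count from (i), and \cite[Thm.~4.12]{GRS25}. The last says the union in (i) is a single class for $m<n$ and splits into $e$ conjugacy classes for $m=n$.

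You instead compute $N=\operatorname{Stab}_G(X)$ directly. It consists of block permutations, within-block permutations and block scalars $x_i\in K$; for $m=n$ it carries the constraint $\prod x_i^{\lambda_i}\in H$, where the $\alpha$-twists cancel in the abelian quotient $K/H$. You then read off $e/[K:H]$ as the proportion of $K^k$ lying in the kernel of the homomorphism $K^k\to K/H$, $(x_i)\mapsto \prod x_i^{\lambda_i}H$.

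Your route is self-contained and does not need the classification of conjugacy classes from \cite{GRS25}. Because it shows $|N|$ is independent of $\alpha$, combining it with (i) actually recovers that there are exactly $e$ classes, all of equal length. The paper's division of the count in (i) by $e$ uses that equal length tacitly. Your route also produces the explicit description of $N_G(P_\lambda)$ that the paper only writes down later in \eqref{eq:compiso:ng}. The paper's route is shorter, given the citations.

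There are a few slips, none of which affects the argument:
\begin{itemize}
\item The formula $|N| = |G|\cdot|P|/(\#\text{conjugates})$ should read $|N|=|G|/|P^G|$. You do not actually use it.
\item $\mathsf D_d/\mathsf D_{d/2}$ is cyclic of order $2$. The only pair with non-cyclic $K/H$ is $(\mathsf D_d,\mathsf C_d)$ with $d$ even, where indeed $K/H\cong\mathsf C_2\times\mathsf C_2$, so your computation there stands.
\item The element compensating on the untouched coordinates lies in $G_{n_0}(K,K)$, not in $P_0=G_{n_0}(K,H)$.
\end{itemize}
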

\begin{proof}
  \begin{enumerate}
    \item This follows as in \cite[Lem.~3.5]{MT18}, \cite[Prop.~6.75]{OT92}.
    \item We have \(|{G}| = |{K}|^{n - 1}|{H}|n!\) and
      \begin{align*}
        |{P}| = \begin{cases}
          |{K}|^{n - m - 1}|{H}|(n - m)!\prod_{i = 1}^k(\lambda_i!), & m < n\\
          \prod_{i = 1}^k(\lambda_i!), & m = n.
        \end{cases}
      \end{align*}
      If \(m = n\), \cite[Thm.~4.12]{GRS25} implies that the groups \(P_\lambda^\alpha\) partition into \(e\) conjugacy classes.
      In all other cases, there is one conjugacy class corresponding to the partition \(\lambda\).
      Together with the length of these classes from part \ref{lem:ordersimprim:1}, this gives the claimed orders of groups.\qedhere
  \end{enumerate}
\end{proof}

For matrices \(A_1,\dots, A_k\), we write \(\diag(A_1,\dots, A_k)\) to denote the block diagonal matrix with the \(A_i\) on the diagonal.
If \(\sigma\in S_k\), \(\diag(A_1,\dots,A_k).\sigma\) denotes the matrix with the corresponding blocks of columns permuted.
We abbreviate \(\diag(g_1,\dots,g_n).\sigma\in G_n(K, H)\) by \[((g_1,\dots, g_n), \sigma)\] with \(g_i\in K\) and \(\sigma\in S_n\).
\begin{notation}
  \label{not:gamma}
  \begin{enumerate}
    \item We define the map \[\delta_n: G_n(K, K) \to K/H,\ ((g_1,\dots,g_n), \sigma)\mapsto g_1\cdots g_n H.\]
      Notice that \(\delta_n\) is a group homomorphism because \(K/H\) is abelian by construction.
    \item Let \(\lambda = 1^{b_1}\cdots m^{b_m}\vdash m \leq n\).
      Set \[\Gamma_\lambda \coloneqq \prod_kG_{b_k}(K, K),\] where \(G_0(K, K)\) is the trivial group as before.
      We have an embedding \[\iota_\lambda:\Gamma_\lambda \to G_m(K, K)\] by setting \[\iota_\lambda((g_1,\dots,g_{b_k}), \tau) = \diag(I_{b_1 + \cdots + (k - 1) b_{k - 1}}, \diag(g_1I_k,\dots,g_{b_k}I_k).\tau, I_{(k + 1)b_{k + 1} + \cdots + mb_m})\] for \(((g_1,\dots,g_{b_k}), \tau)\in G_{b_k}(K, K)\).
    \item We have \(N_G(P_\lambda)\leq G_{n - m}(K, K)\times G_m(K, K)\).
      Let \(\pi_{n - m}: N_G(P_\lambda) \to G_{n - m}(K, K)\) and \(\pi_m: N_G(P_\lambda) \to G_m(K, K)\) be the corresponding projections.
  \end{enumerate}
\end{notation}

\begin{lemma}
  \label{lem:compiso}
  With the above notation, if either \(m < n\) or \(K = H\), there is a surjective map \[\psi:N_G(P_\lambda)\to \Gamma_\lambda\] such that \[\delta_m(\pi_m(x)) = \prod_{k = 1}^m\delta_{b_k}(\psi(x)_k)^k\] for all \(x\in N_G(P_\lambda)\).
  Furthermore, if \(C\leq N_G(P_\lambda)\) is a complement of \(P_\lambda\), then the restriction \(\psi|_C\) is an isomorphism onto \(\Gamma_\lambda\).
\end{lemma}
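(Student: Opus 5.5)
We construct \(\psi\) by analysing how an element of \(N_G(P_\lambda)\) acts on the fixed space and on the blocks of \(P_\lambda\). The fixed space is \(U=\Fix_V(P_\lambda)\). If \(m<n\), it is spanned by the vectors \(f_B=\sum_{j\in B} e_j\), one for each block \(B\) of size \(\lambda_i\) in the last \(m\) coordinates. (The factor \(P_0=G_{n-m}(K,H)\) fixes nothing in the first \(n-m\) coordinates, since \(H\neq\{1\}\) or \(n-m\ge 2\).)

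Any \(x\in N_G(P_\lambda)\) stabilizes \(U\) and also the orthogonal decomposition into the first \(n-m\) and the last \(m\) coordinates, which is why \(N_G(P_\lambda)\le G_{n-m}(K,K)\times G_m(K,K)\). Write \(\pi_m(x)=((g_1,\dots,g_m),\sigma)\). Normalizing \(\prod_i S_{\lambda_i}\) forces \(\sigma\) to permute the blocks \(B\), sending blocks only to blocks of the same size. Moreover \(x\) must map each \(f_B\) into \(U\), so \(\pi_m(x)f_B=f_{\sigma(B)}\,c_B\) for a single scalar \(c_B\in K\). Hence \(g_j=c_B\) is constant for all \(j\in B\).

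So \(\pi_m(x)\) equals \(\iota_\lambda(y)\cdot p\), where \(y\in\Gamma_\lambda\) records, for each size \(k\), the permutation of the \(b_k\) blocks of size \(k\) together with the constants \(c_B\), and \(p\in\prod S_{\lambda_i}\) is the within-block permutation. We set \(\psi(x)=y\). This is a homomorphism because \(\iota_\lambda(\Gamma_\lambda)\) normalizes \(\prod S_{\lambda_i}\) and meets it trivially. Its kernel is \(\{x:\pi_m(x)\in\prod S_{\lambda_i}\}\), and this equals \(P_\lambda\): the \(G_{n-m}(K,K)\)-part must then have determinant class in \(H\), so it lies in \(G_{n-m}(K,H)=P_0\).

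The displayed identity follows directly from the definitions. \(\delta_m\) kills permutations, and \(\delta_m(\iota_\lambda(y))=\prod_k\delta_{b_k}(y_k)^k\), because each constant \(c_B\) appears \(k\) times on the diagonal and \(K/H\) is abelian.

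Surjectivity is the main point, and it is here that the hypothesis enters. Given \(y\in\Gamma_\lambda\), consider \(\iota_\lambda(y)\in G_m(K,K)\).
\begin{itemize}
\item If \(m<n\), choose \(z=\diag(a,1,\dots,1)\in G_{n-m}(K,K)\) with \(a\in K\) such that \(a\,\delta_m(\iota_\lambda(y))\in H\). Then \((z,\iota_\lambda(y))\in G_n(K,H)\), and it normalizes \(P_\lambda\) because \(z\) normalizes \(P_0\trianglelefteq G_{n-m}(K,K)\).
\item If \(K=H\), then \(\iota_\lambda(y)\) already lies in \(G\).
\end{itemize}
One must check that \(\iota_\lambda(y)\) normalizes \(\prod S_{\lambda_i}\); this holds because conjugating a permutation matrix by block-scalar diagonals and block permutations gives another permutation matrix of the same block type.

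Finally, let \(C\) be a complement of \(P_\lambda\). Since \(\ker\psi=P_\lambda\) and \(C\cap P_\lambda=1\), the restriction \(\psi|_C\) is injective. As \(N_G(P_\lambda)=P_\lambda C\) and \(\psi\) is surjective, \(\psi|_C\) is also onto \(\Gamma_\lambda\). I expect the hardest step to be the careful verification that \(\ker\psi=P_\lambda\), in particular the determinant bookkeeping in \(K/H\) between the two factors when \(m<n\); surjectivity is comparatively direct.
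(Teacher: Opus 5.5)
Your proposal is correct and follows essentially the paper's route: write \(N_G(P_\lambda)\) as block-diagonal elements whose \(m\)-part is a block-scalar matrix times a block permutation times within-block permutations, read off \(\psi\), check \(\ker\psi = P_\lambda\), and get surjectivity by correcting one entry in the first \(n-m\) coordinates when \(m<n\), or trivially when \(K=H\). The differences are cosmetic: you justify the shape of the normalizer via \(\Fix_V(P_\lambda)\), where the paper just says ``one checks'', and you deduce that \(\psi|_C\) is injective from \(\ker\psi = P_\lambda\) rather than from the order count in Lemma~\ref{lem:ordersimprim}.
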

\begin{proof}
  We have \(P_\lambda = G_{n - m}(K, H)\times \prod_k S_k^{b_k}\) and one checks that \[N_{G_{kb_k}(K, K)}(S_k^{b_k}) = \left\{\!\!\begin{pmatrix} g_1 \sigma_1& & \\ & \ddots & \\ & & g_{b_k}\sigma_{b_k}\end{pmatrix}\!\!.\tau\ \middle|\ g_i\in K, \sigma_i\in S_k, \tau\in S_{b_k}\!\right\},\] where \(\sigma_i\) denotes the corresponding \(k\times k\) permutation matrix and \(\tau\) acts on the matrix by permuting the blocks of \(k\) columns.
  Furthermore, there is a surjective map \[\psi_k: N_{G_{kb_k}(K, K)}(S_k^{b_k}) \to G_{b_k}(K, K),\ \diag(g_1\sigma_1,\dots, g_{b_k}\sigma_{b_k}).\tau \mapsto ((g_1,\dots, g_{b_k}),\tau).\]
  We obtain
  \begin{align}
    \label{eq:compiso:ng}
    N_G(P_\lambda) = \left\{\!\!\begin{pmatrix} A_0 & & \\ & \ddots & \\ & & A_m\end{pmatrix}\ \middle| \begin{array}{l}A_0\in G_{n - m}(K, K), A_k\in N_{G_{kb_k}(K, K)}(S_k^{b_k}) \text{ for } k\geq 1,\\ \delta_{n - m}(A_0) \delta_{b_1}(A_1) \cdots \delta_{mb_m}(A_m) = 1\end{array}\!\!\!\right\},
  \end{align}
  where we assume here and in the following that the entry \(A_0\) is not present if \(n = m\) and the entry \(A_k\) is not present if \(b_k = 0\).
    Because we assume \(m < n\) or \(K = H\), it follows from \eqref{eq:compiso:ng} that the map \[\psi: N_G(P_\lambda)\to \Gamma_\lambda,\ \diag(A_0,\dots, A_m)\mapsto (\psi_1(A_1),\dots, \psi_m(A_m))\] is surjective with kernel \(\ker\psi = P_\lambda\) and that \(\psi\) commutes with the maps \(\delta_k\) as claimed.
  If \(C\leq N_G(P_\lambda)\) is a complement to \(P_\lambda\), so that \(N_G(P_\lambda) = P_\lambda C\), then the restriction \(\psi|_C\) must be surjective as well.
  By Lemma~\ref{lem:ordersimprim}, we conclude that \(\psi|_C\) is an isomorphism.
\end{proof}

\begin{prop}
  \label{prop:compviadiagram}
  Let \(\lambda = 1^{b_1}\cdots m^{b_m}\vdash m < n\).
  The parabolic subgroup \(P_\lambda\) has a complement in \(N_G(P_\lambda)\) if and only if there is a morphism \(\phi:\Gamma_\lambda\to G_{n - m}(K, K)\) such that the diagram
  \begin{equation}
    \label{eq:compviadiagram}
    \begin{tikzcd}
      \Gamma_\lambda\arrow["\phi"]{r} \arrow["\delta_{b_1}\delta_{b_2}^2\cdots \delta_{b_m}^m"']{d}& G_{n - m}(K, K) \arrow["\delta_{n - m}"]{d}\\
      K/H \arrow["\operatorname{inv}"]{r}&K/H
    \end{tikzcd}
  \end{equation}
  commutes, where \(\operatorname{inv}\) is inversion in \(K/H\) (this is a morphism as \(K/H\) is abelian).
\end{prop}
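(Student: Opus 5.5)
We prove both directions using the explicit description \eqref{eq:compiso:ng} of \(N_G(P_\lambda)\) and the map \(\psi\) of Lemma~\ref{lem:compiso}. Throughout, \(m<n\). We write \(N = N_G(P_\lambda)\) and set \(\Delta := \delta_{b_1}\delta_{b_2}^2\cdots\delta_{b_m}^m\colon\Gamma_\lambda\to K/H\).

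For the forward direction, suppose \(C\leq N\) is a complement to \(P_\lambda\). By Lemma~\ref{lem:compiso}, \(\psi|_C\colon C\to\Gamma_\lambda\) is an isomorphism. We define
\[\phi := \pi_{n-m}\circ(\psi|_C)^{-1}\colon\Gamma_\lambda\to G_{n-m}(K,K).\]
For \(x\in C\), write \(x=\diag(A_0,A_1,\dots,A_m)\). The defining condition in \eqref{eq:compiso:ng} says
\[\delta_{n-m}(A_0)\,\delta_m(\pi_m(x)) = 1 \quad\text{in } K/H,\]
where we use that the product of the \(\delta_{kb_k}(A_k)\) equals \(\delta_m(\pi_m(x))\). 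Lemma~\ref{lem:compiso} gives \(\delta_m(\pi_m(x)) = \Delta(\psi(x))\). Hence
\[\delta_{n-m}(\phi(\psi(x))) = \Delta(\psi(x))^{-1},\]
which is exactly commutativity of \eqref{eq:compviadiagram}.

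For the converse, let \(\phi\) make the diagram commute. The key input is an explicit section of \(\psi\) into \(G_m(K,K)\), namely the embedding \(\iota_\lambda\) of Notation~\ref{not:gamma}. One checks from the definitions that:
\begin{enumerate}
  \item \(\iota_\lambda(\Gamma_\lambda)\) normalizes \(\prod_k S_k^{b_k}\): a block \(g_iI_k\) is scalar, and \(\tau\) permutes whole \(k\)-blocks;
  \item \(\psi_k\circ\iota_\lambda\) is the identity on each factor \(G_{b_k}(K,K)\);
  \item \(\delta_m(\iota_\lambda(\gamma)) = \Delta(\gamma)\), because the entry \(g_i\) appears \(k\) times in the block \(g_iI_k\) and \(K/H\) is abelian.
\end{enumerate}
We then define
\[\theta\colon\Gamma_\lambda\to\GL_n(\BBH),\qquad \gamma\mapsto\diag\bigl(\phi(\gamma),\iota_\lambda(\gamma)\bigr),\]
which is an injective homomorphism. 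Its image lies in \(G=G_n(K,H)\), since
\[\delta_n(\theta(\gamma)) = \delta_{n-m}(\phi(\gamma))\,\Delta(\gamma) = 1\]
by the commutative diagram. The image also normalizes \(P_\lambda\): the first block normalizes \(G_{n-m}(K,H)\), since that group is normal in \(G_{n-m}(K,K)\), and the second block normalizes \(\prod_k S_k^{b_k}\) by (i). So \(\theta(\Gamma_\lambda)\) lies in the group described by \eqref{eq:compiso:ng}.

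Let \(C=\theta(\Gamma_\lambda)\). By (ii), \(\psi\circ\theta=\id\), so \(\psi|_C\) is an isomorphism onto \(\Gamma_\lambda\). Since \(\ker\psi = P_\lambda\), this gives \(C\cap P_\lambda=1\) and \(N=P_\lambda C\), so \(C\) is a complement.

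The main obstacle is the bookkeeping in (ii) and (iii). We must confirm that \(\psi_k\) applied to \(\diag(g_1I_k,\dots,g_{b_k}I_k).\tau\) returns \(((g_1,\dots,g_{b_k}),\tau)\), with the convention that \(\tau\) acts on blocks of columns, and that the \(\delta\)-values multiply as claimed. This relies on \(K/H\) being abelian, so that the order of the factors and the permutation part do not matter.
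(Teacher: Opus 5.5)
Your proof is correct and follows the paper's argument. In the forward direction you set \(\phi=\pi_{n-m}\circ(\psi|_C)^{-1}\) and read off the diagram from \eqref{eq:compiso:ng} and Lemma~\ref{lem:compiso}; in the converse you use exactly the paper's complement \(C_\phi=\{\diag(\phi(x),\iota_\lambda(x))\mid x\in\Gamma_\lambda\}\), and your identity \(\psi\circ\theta=\id\) spells out the paper's ``clearly'' for \(C_\phi\cap P_\lambda=1\) and \(C_\phi P_\lambda=N_G(P_\lambda)\) (one wording nit: \(g_iI_k\) is not scalar over \(\BBH\), but it commutes with permutation matrices because their entries are real, which is all that (i) needs).
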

\begin{proof}
  Assume that \(C\leq N_G(P_\lambda)\) is a complement of \(P_\lambda\).
  For every \(x\in C\) we have \(\delta_{n - m}(\pi_{n - m}(x))\delta_m(\pi_m(x)) = 1\).
  By Lemma~\ref{lem:compiso}, there is an isomorphism \(\psi_C:C\to \Gamma_\lambda\) with \(\delta_m(\pi_m(x)) = \prod_k \delta_{b_k}(\psi(x)_k)^k\) for all \(x\in C\).
  Then \(\phi \coloneqq \pi_{n - m}\circ \psi_C^{-1}\) is as desired.

  Assume now that a morphism \(\phi:\Gamma_\lambda\to G_{n - m}(K, K)\) as in the diagram \eqref{eq:compviadiagram} exists.
  Consider the set of matrices in \(G_{n - m}(K, K) \times G_m(K, K)\) defined by \[C_\phi\coloneqq\left\{\!\!\begin{pmatrix} \phi(x) & \\ & \iota_\lambda(x) \end{pmatrix}\ \middle|\ x \in\Gamma_\lambda\right\}\] with the map \(\iota_\lambda\) as in Notation~\ref{not:gamma}.
  By the commutativity of the diagram and the explicit description of \(N_G(P_\lambda)\) in \eqref{eq:compiso:ng}, we have \(C_\phi\subseteq N_G(P_\lambda)\).
  Because \(\phi\) is a homomorphism, \(C_\phi\) is a group.
  Clearly, \(C_\phi\cap P_\lambda = \{1\}\) and \(C_\phi P_\lambda = N_G(P_\lambda)\), so \(C_\phi\) is a complement of \(P_\lambda\).
\end{proof}

Proposition~\ref{prop:compviadiagram} enables us to prove the existence of a complement in many cases.
However, not every parabolic subgroup admits a complement in its normalizer due to the following lemma.

\begin{lemma}
  \label{lem:imprimspecial}
  Let either \(K = \mathsf D_d\) with \(d\) even and \(H\in \{\mathsf C_d, \mathsf C_{2d}, \mathsf D_{d/2}\}\) or \(K = \mathsf O\) and \(H = \mathsf T\).
  Let \(\lambda = 1^{b_1}\cdots m^{b_m}\vdash m < n\).
  Then the parabolic subgroup \(P_\lambda\) of \(G = G_n(K, H)\) has a complement in \(N_G(P_\lambda)\) if and only if \[\sum_{k\text{ odd}}b_k \leq n - m.\]
\end{lemma}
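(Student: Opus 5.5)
We apply Proposition~\ref{prop:compviadiagram} in both directions. First we compute \(K/H\) in each case. For \(K = \mathsf D_d\) with \(d\) even we have \(\bfj^2 = -1 = \zeta_{2d}^d\in \mathsf C_d\). Hence \(\mathsf D_d/\mathsf C_d\cong \mathsf C_2\times\mathsf C_2\), while \(\mathsf D_d/\mathsf C_{2d}\), \(\mathsf D_d/\mathsf D_{d/2}\) and \(\mathsf O/\mathsf T\) are all \(\mathsf C_2\). So in every case \(K/H\) is an elementary abelian \(2\)-group. Consequently the map \(\operatorname{inv}\) is the identity, and \(\delta_{b_k}^k\) is trivial for even \(k\). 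The left vertical map in \eqref{eq:compviadiagram} therefore reduces to \(\prod_{k\text{ odd}}\delta_{b_k}\), and only the factors \(G_{b_k}(K,K)\) of \(\Gamma_\lambda\) with \(k\) odd matter. Write \(B=\sum_{k\text{ odd}}b_k\).

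\emph{Sufficiency.} Assume \(B\le n-m\). Choose pairwise disjoint sets of \(b_k\) coordinates among the \(n-m\) coordinates, one set for each odd \(k\). Define \(\phi\) on each \(G_{b_k}(K,K)\) with \(k\) odd as the tautological embedding onto these coordinates, and let \(\phi\) be trivial on the factors with \(k\) even. These images commute, so \(\phi\) is a homomorphism \(\Gamma_\lambda\to G_{n-m}(K,K)\). Its composite with \(\delta_{n-m}\) is \(\prod_{k\text{ odd}}\delta_{b_k}\), so the diagram commutes and a complement exists by Proposition~\ref{prop:compviadiagram}.

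\emph{Necessity.} Assume \(B>n-m\) and suppose \(\phi\) exists. Restrict \(\phi\) to the abelian-by-coordinate subgroup \(K^B\) of diagonal matrices in the odd factors, with coordinates \(x_1,\dots,x_B\in K\). We then need a homomorphism \(f:K^B\to K\wr S_{n-m}\) such that \(\delta_{n-m}\circ f\) equals the product of the classes \(x_iH\). The essential arithmetic fact is that \(-1\) is the unique involution of \(K\) and \(-1\in H\). Hence no element of \(K\setminus H\) has order \(2\), and the quotient \(K\to K/H\) does not split, even on cyclic subgroups.

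Fix \(t\in K\setminus H\) (for instance \(\bfj\), or an element of order \(8\) in \(\mathsf O\)), and let \(t_i\) denote \(t\) placed in coordinate \(i\). Then the \(t_i\) pairwise commute, \(t_i^{\ord t}=1\), and \(t_i^{\ord(t)/2} = z_i\) is \(-1\) in coordinate \(i\). For \(g=((g_1,\dots,g_r),\sigma)\in K\wr S_r\), the class \(\delta(g)\) is the product over the cycles of \(\sigma\) of the cycle products. I would show: if \(\delta(g)\notin H\), then some cycle of \(\sigma\) has cycle product \(c\notin H\), and then a suitable power of \(g\) acts on that cycle's coordinates by the scalar \(-1\) (using that \(c\notin H\) forces \(c\) to have order divisible by \(4\), with \(c^{\ord(c)/2}=-1\)).

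Applying this to \(f(t_i)\), and to the products \(f(t_it_j)\), which also have nontrivial \(\delta\), I would attach to each \(i\) a nonempty set \(S_i\) of coordinates on which \(f(z_i)\) acts as \(-1\). The step I expect to be the main obstacle is showing that these sets are pairwise disjoint. For this I would use that \(f(z_i)f(z_j)=f(z_iz_j)\) must again be "odd" in the above sense, so it cannot cancel on a common cycle. I would also use commutativity to reduce to the case where all \(f(t_i)\) lie in a common base group \(K^{n-m}\) after passing to suitable powers. Disjointness gives \(B\le n-m\), a contradiction. If the disjointness argument via the \(z_i\) proves awkward, the fallback is a counting argument on the \(2\)-rank of the images of the elementary abelian group \(\langle z_1,\dots,z_B\rangle\) inside \(K\wr S_{n-m}\), tracked through \(\delta\).
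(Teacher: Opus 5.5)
Your sufficiency argument is the same as the paper's and is fine. The necessity part, however, has a genuine gap. Every object your argument actually uses, namely \(t_i\), \(z_i\) and \(t_it_j\), lies in the abelian subgroup \(A=\langle t_1,\dots,t_B\rangle\cong\mathsf C_{\ord t}^B\) of \(K^B\). On \(A\), the constraint \(\delta\circ f=\prod_i x_iH\) can be met inside a single coordinate, by \(t_1^{a_1}\cdots t_B^{a_B}\mapsto((t^{a_1+\cdots+a_B},1,\dots,1),\id)\). For this map, \(f(z_i)\) is \(-1\) in coordinate \(1\) for every \(i\), so all your sets \(S_i\) equal \(\{1\}\). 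Hence no argument using only \(f|_A\) can give disjoint \(S_i\). Your fallback via the \(2\)-rank of \(f(\langle z_1,\dots,z_B\rangle)\) fails for the same reason: here that rank is \(1\).

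The specific mechanism you propose also rests on false claims. Since \(K/H\) has exponent \(2\), \(\delta(t_it_j)=t^2H=H\) is trivial. Since \(-1\in H\), the elements \(z_i\) and \(z_iz_j\) also have trivial \(\delta\), so nothing forces \(f(z_iz_j)\) to be ``odd''. As a sanity check, all your ingredients hold verbatim for \(K=\mathsf C_4\), \(H=\mathsf C_2\): the unique involution \(-1\) lies in \(H\), \(K\to K/H\) does not split, and the cycle-product analysis goes through. But there multiplication \(K^B\to K=G_1(K,K)\) is a homomorphism of the required kind, and complements always exist (Theorem~\ref{thm:imprim}).

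What is missing is the non-commutativity of \(K\) inside a single factor. The paper's Lemma~\ref{lem:nosuchmap} takes \(\xi=\zeta_{2d}\bfj\notin H\) together with \(\bfi\), where \(\xi\bfi\xi^{-1}=\bfi^{-1}\) and \(\xi^2=\bfi^2=-1\). Note that \(\bfj\in\mathsf D_{d/2}=\langle\zeta_d,\bfj\rangle\), so your example \(t=\bfj\) is not always admissible, whereas \(\xi\) works uniformly.

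Write \(\phi_i(\xi)=((x_1,\dots,x_q),\sigma)\) and \(\phi_i(\bfi)=((y_1,\dots,y_q),\tau)\). The paper runs a cycle analysis using \(\xi^4=1\) and \(\sigma\tau=\tau^{-1}\sigma\). This gives, for each \(i\), a coordinate \(j\) fixed by both \(\sigma\) and \(\tau\) with \(x_j\in\xi H\), hence \(x_j^2=-1\).

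If \(q<p\), two indices \(i\neq i'\) share such a \(j\); write \(z_j\) for the corresponding entry of \(\phi_{i'}(\xi)\). The argument then closes in three steps:
\begin{enumerate}
\item Commutation of \(\phi_{i'}(\xi)\) with \(\phi_i(\xi)\) forces \(z_j=\pm x_j\).
\item Commutation of \(\phi_{i'}(\xi)\) with \(\phi_i(\bfi)\) then gives \(x_jy_j=y_jx_j\).
\item Combined with \(x_jy_j=y_j^{-1}x_j\), this yields \(y_j=\pm1\), contradicting \(y_j^2=x_j^2=-1\).
\end{enumerate}

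To repair your approach, you would need such a second element \(u\) in each factor with \(tut^{-1}=u^{-1}\), and you would need to exploit how \(f(u_i)\) commutes with \(f(t_{i'})\). With \(t\) alone, disjointness of the \(S_i\) is simply not provable.
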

\begin{proof}
  If \(\sum_{k\text{ odd}} b_k \leq n - m\), we may embed the group \(\prod_{k\text{ odd}}G_{b_k}(K, K)\) into \(G_{n - m}(K, K)\) diagonally.
  We may extend this embedding to a map \(\phi: \Gamma_\lambda\to G_{n - m}(K, K)\) by mapping every factor \(G_{b_k}(K, K)\) with \(k\) even to 1.
  For the considered groups, the quotient \(K/H\) is of exponent 2.
  Hence \(\phi\) fits into the diagram in \eqref{eq:compviadiagram} because for \(k\) even we have \(\delta_{b_k}^k(x) = 1\) for all \(x\in G_{b_k}(K, K)\) on the left arrow and the inversion map \(\operatorname{inv}\) is the identity.
  So \(P_\lambda\) has a complement in the normalizer in this case.

  On the other hand, if \(P_\lambda\) does have a complement, then there must be a map \(\phi':\prod_{k\text{ odd}}G_{b_k}(K, K)\to G_{n - m}(K, K)\) such that the diagram
  \[\begin{tikzcd}
    \prod_{k\text{ odd}}G_{b_k}(K, K)\arrow["\phi'"]{rr} \arrow["\prod_{k\text{ odd}}\delta_{b_k}"']{dr}&& G_{n - m}(K, K) \arrow["\delta_{n - m}"]{dl}\\
    & K/H&
  \end{tikzcd}\]
  commutes by the same argument.
  Any such \(\phi'\) restricts to a map \(\phi'|_{K^p}:K^p \to G_{n - m}(K, K)\) that commutes with the maps \(\delta_k\), where \(p = \sum_{k\text{ odd}}b_k\).
  In Lemma~\ref{lem:nosuchmap} below, we prove that the existence of such a map implies \(p \leq n - m\) as claimed.
\end{proof}

\begin{lemma}
  \label{lem:nosuchmap}
  Let \(p, q\geq 1\) and let either \(K = \mathsf D_d\) with \(d\) even and \(H\in \{\mathsf C_d, \mathsf C_{2d}, \mathsf D_{d/2}\}\) or \(K = \mathsf O\) and \(H = \mathsf T\).
  By abuse of notation, we write \(\delta_1^p\) for the map \[\delta_1^p:K^p\to K/H, (x_1,\dots,x_p) \mapsto \delta_1(x_1)\cdots \delta_1(x_p).\]
  Assume there is a homomorphism \(\phi: K^p \to G_q(K, K)\) with \(\delta_q \circ \phi = \delta_1^p\).
  Then \(q\geq p\).
\end{lemma}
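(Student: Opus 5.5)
The plan is to prove the lemma by decomposing $\phi$ along the orbits of its image in $S_q$, and then counting how many factors of $K^p$ each orbit can account for. Throughout I use two facts about the groups in the statement: the quotient $K/H$ is an elementary abelian $2$-group (it is $\mathsf C_2$, or $\mathsf C_2\times\mathsf C_2$ when $H=\mathsf C_d$), and $K$ contains an element $x\notin H$ of order $4$ with $x^2=-1$. Concretely, one can take $x=\bfj$ for $H\in\{\mathsf C_d,\mathsf C_{2d}\}$, $x=\zeta_{2d}\bfj$ for $H=\mathsf D_{d/2}$, and $x=(\bfi+\bfj)/\sqrt2\in\mathsf O\setminus\mathsf T$.

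Write $K^p=K_1\times\cdots\times K_p$ and let $\pi:G_q(K,K)\to S_q$ be the projection. Let $O_1,\dots,O_r$ be the orbits of $\pi(\phi(K^p))$ on $\{1,\dots,q\}$. After conjugating by a permutation matrix, which does not change $\delta_q$, every $\phi(g)$ is block diagonal with blocks in $G_{|O_j|}(K,K)$. This gives homomorphisms $\phi_j:K^p\to G_{|O_j|}(K,K)$ with
\[\delta_1^p=\delta_q\circ\phi=\prod_{j=1}^r\delta_{|O_j|}\circ\phi_j.\]
Each $\chi_j:=\delta_{|O_j|}\circ\phi_j$ is a homomorphism $K^p\to K/H$. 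Call a factor $K_i$ \emph{detected} by the block $j$ if $\chi_j|_{K_i}$ is nontrivial. Since $\delta_1^p|_{K_i}=\delta_1$ is nontrivial for every $i$, each of the $p$ factors is detected by at least one block. It therefore suffices to prove the following key claim: a transitive block of size $s$ detects at most $s$ factors. Summing over the blocks then gives $p\le\sum_j|O_j|=q$.

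For the key claim, fix a block of size $s$ and suppose it detects $t$ factors $K_{i_1},\dots,K_{i_t}$. The images $\phi_j(K_{i_l})$ commute pairwise. I would argue in two cases.

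First, suppose $\pi(\phi_j(K_{i_l}))$ is trivial for some detected factor. Then $\phi_j(K_{i_l})$ lies in the diagonal group $K^s$. By transitivity, the commuting images of the other factors permute the diagonal coordinates, so these coordinates are related by conjugation. Using that $x^2=-1$ is central and that $\phi_j(x)$ must have a coordinate outside $H$, I expect to show that two different detected factors with trivial permutation part would force, in some coordinate, a pair of commuting elements outside $H$ with independent classes in $K/H$. I would then rule this out, or bound it, by the explicit structure of $K$; for example, in $\mathsf D_d$ an element commuting with $\bfj$ lies in $\langle\bfj\rangle$.

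Second, the factors with nontrivial permutation part give pairwise commuting subgroups of $S_s$ that generate a transitive group. I would bound their number by $s$ using the fact that the centralizer of a transitive group acts semiregularly.

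The main obstacle is exactly this key claim, namely controlling commuting images of $K$ inside a single transitive block. If the direct case analysis becomes unwieldy, I would instead restrict $\phi$ to $\langle x\rangle^p\cong\mathsf C_4^p$, whose image is abelian. I would then show that an abelian subgroup of $G_s(K,K)$ acting transitively has the property that its image under $\delta_s$, restricted to elements of the form $\phi(x^{\epsilon_1},\dots,x^{\epsilon_p})$, has $\mathbb F_2$-rank bounded by $s$. Here the count uses that $\delta_1^p$ distinguishes all $p$ coordinates through the characters $(\epsilon_i)\mapsto\sum\epsilon_i$ on the subsets of detected factors.
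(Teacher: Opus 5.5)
Your reduction to transitive blocks is sound (each \(\chi_j\) is a homomorphism, \(\delta_1^p=\prod_j\chi_j\), and every factor is detected by some block). However, it carries none of the content: everything rests on the key claim, and none of the arguments you sketch for it works.

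\begin{itemize}
\item The fallback via \(\langle x\rangle^p\cong\mathsf C_4^p\) cannot succeed, because the statement is false there. The map \((x^{\epsilon_1},\dots,x^{\epsilon_p})\mapsto x^{\epsilon_1+\cdots+\epsilon_p}\in K=G_1(K,K)\) is a homomorphism compatible with \(\delta_1^p\), so a single block of size \(1\) detects all \(p\) factors. Also, \(\delta_1^p\) is one character, so it does not ``distinguish'' the coordinates.
\item Your second case fails as stated. For a nontrivial \(\lambda\colon K\to S_2\), the map \((g_i)_i\mapsto((1,1),\prod_i\lambda(g_i))\in G_2(K,K)\) is a transitive block in which all \(p\) factors have nontrivial permutation part. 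So semiregularity cannot bound their number; detection has to enter, and you do not say how.
\item Most importantly, the facts you isolate all hold for \((\mathsf D_d,\mathsf C_{2d})\) with \(d\) \emph{odd}, where the lemma is false. These facts are: \(K/H\) elementary abelian, some \(x\notin H\) with \(x^2=-1\), and \(C_{\mathsf D_d}(\bfj)=\langle\bfj\rangle\). For odd \(d\), \(\langle\bfj\rangle\cong\mathsf C_4\) is a complement of \(\mathsf C_d\), and the projection \(\theta\colon\mathsf D_d\to\langle\bfj\rangle\) satisfies \(\theta(g)\in g\,\mathsf C_d\). Then \((g_i)_i\mapsto\prod_i\theta(g_i)\) works with \(q=1\) for every \(p\). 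So an ingredient that genuinely uses the evenness of \(d\) (resp.\ \(K=\mathsf O\)) is missing.
\end{itemize}

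In the paper this ingredient is the element \(\bfi=\zeta_{2d}^{d/2}\in K\), available because \(d\) is even (and because \(\mathsf D_4\leq\mathsf O\)). With \(\xi=\zeta_{2d}\bfj\) it satisfies \(\xi\bfi\xi^{-1}=\bfi^{-1}\) and \(\bfi^2=\xi^2=-1\). No orbit decomposition is used.
\begin{itemize}
\item For each factor \(i\), take a parity count over the orbits of the permutation part \(\sigma\) of \(\phi_i(\xi)\). Follow it with an analysis of the orbits of the permutation part \(\tau\) of \(\phi_i(\bfi)\), using the two relations. This produces a coordinate \(j\) fixed by both \(\sigma\) and \(\tau\) at which the entry \(x_j\) of \(\phi_i(\xi)\) lies in \(\xi H\).
\item Two factors \(i\neq i'\) cannot share such a coordinate. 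The commuting \(j\)-th entries \(x_j\) of \(\phi_i(\xi)\) and \(z_j\) of \(\phi_{i'}(\xi)\) both have order \(4\), hence \(x_j=\pm z_j\). Then the \(j\)-th entry \(y_j\) of \(\phi_i(\bfi)\) is inverted by \(x_j\) and centralized by \(z_j\), so \(y_j=\pm1\). This contradicts \(y_j^2=x_j^2=-1\).
\end{itemize}
This injects the \(p\) factors into \(\{1,\dots,q\}\) directly. Your key claim is true, but the natural proof is this same argument run inside each block. It must also handle an arbitrary nontrivial \(\chi_j|_{K_i}\), which requires adapting the choice of \(\xi\) to the character. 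So the block decomposition adds work rather than saving it.
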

\begin{proof}
  Recall that \(\mathsf D_d = \langle \zeta_{2d}, \bfj\rangle\), \(\mathsf T = \langle \mathsf D_2, \omega\rangle\) and \(\mathsf O = \langle \mathsf D_4, \omega\rangle\), where \(\omega\) is an element of order 3.
  Let \((K, H)\) be one of the considered pairs of groups; if \(K = \mathsf O\), let \(d = 4\).
  Let \(\xi = \zeta_{2d}\bfj\) and notice that \(\xi^2 = -1\), \(\xi^4 = 1\) and \(\xi \bfi = \bfi^{-1} \xi\).
  Further, \(\xi\notin H\) and \(\xi^2\in H\).
  In the following, for \(a\in K\), we write \(a\equiv \xi\), if \(aH = \xi H\).
  Assume that a map \(\phi: K^p\to G_q(K, K)\) with \(\delta_q\circ\phi = \delta_1^p\) exists for \(q < p\).
  For \(1\leq i \leq p\), write \(\iota_i : K\to K^p\) for the natural embedding of \(K\) into the \(i\)-th component of \(K^p\) and \(\phi_i = \phi\circ\iota_i\) for the concatenation.

  For some \(i\), let \(\phi_i(\xi) = ((x_1,\dots,x_q), \sigma)\) with \(x_j\in K\) and \(\sigma\in S_q\).
  We have \(\delta_1^p(\phi_i(\xi)) = \xi H\), so \(x_1\cdots x_q \equiv \xi\).
  Hence \(x_j\equiv \xi\) for an odd number of indices \(j\).
  Let \(1\leq j\leq q\) with \(x_j\equiv \xi\) and let \(\Omega_\sigma(j) = \{\sigma^k(j)\mid k \geq 1\}\) be the orbit under \(\sigma\).
  We have \(\phi_i(1) = \phi_i(\xi)^4 = ((1,\dots,1), \id)\), so \(|{\Omega_\sigma(j)}| \leq 4\) and \(x_jx_{\sigma^{-1}(j)}x_{\sigma^{-2}(j)}x_{\sigma^{-3}(j)} = 1\).
  If \(|{\Omega_\sigma(j)}| = 4\), then the number \(|{\{x_{j'}\mid j'\in \Omega_\sigma(j)\text{ and } x_{j'}\equiv \xi\}}|\) must be even.
  If \(|{\Omega_\sigma(j)}| = 2\), then \((x_jx_{\sigma^{-1}(j)})^2 = 1\).
  Assuming \(x_{\sigma^{-1}(j)}\not\equiv \xi\) yields \(x_jx_{\sigma^{-1}(j)} \equiv\xi\), a contradiction.
  So \(x_{\sigma^{-1}(j)}\equiv\xi\) as well.
  Because the total number of \(j\) with \(x_j \equiv \xi\) must be odd, there must hence be \(1\leq j\leq q\) with \(\sigma(j) = j\) and \(x_j\equiv \xi\).
  Choose such an index \(j\).

  Let \(\phi_i(\bfi) = ((y_1,\dots,y_q), \tau)\).
  We have \(\sigma\tau = \tau^{-1}\sigma\) because \(\xi\bfi = \bfi^{-1}\xi\).
  Since \(\phi_i(\bfi)^4 = 1\), we again have \(|{\Omega_\tau(j)}|\leq 4\).
  If \(|{\Omega_\tau(j)}| = 4\), then \(\sigma(\tau^2(j)) = \tau^2(j)\) and \(\sigma(\tau(j)) = \tau^{-1}(j)\), that is, \(\sigma\) has the additional fixed point \(\tau^2(j)\) and acts as transposition on the remaining two elements of \(\Omega_\tau(j)\).
  We have \(\phi_i(\xi)\phi_i(\bfi)^2 = \phi_i(\bfi)^2\phi_i(\xi)\), so in particular \[x_jy_jy_{\tau^{-1}(j)} = y_jy_{\tau^{-1}(j)}x_{\tau^2(j)}.\]
  Hence \(x_{\tau^2(j)}\) is conjugate to \(x_j\) in \(K\), so \(x_{\tau^2(j)}\equiv\xi\).
  Similarly, if \(|{\Omega_\tau(j)}| = 2\), we have \(\sigma(\tau(j)) = \tau(j)\) and \[x_jy_j = y_{\tau(j)}^{-1}x_{\tau(j)}.\]
  Because \((y_jy_{\tau(j)})^2 = 1\), we conclude \(y_jy_{\tau(j)} = \pm1\), so again \(x_{\tau(j)} \equiv\xi\).
  In conclusion, if \(|{\Omega_\tau(j)}| > 1\), then the number \(|{\{x_{j'}\mid j'\in \Omega_\tau(j)\text{ and } x_{j'}\equiv \xi\}}|\) must be even.
  Hence, for every \(1\leq i\leq p\), there must be \(1\leq j \leq q\), such that for \(((x_1,\dots,x_q), \sigma) = \phi_i(\xi)\) and \(((y_1,\dots,y_q), \tau) = \phi_i(\bfi)\), we have \(\sigma(j) = \tau(j) = j\) and \(x_j\equiv\xi\).

  Because we assumed \(q < p\), there must be \(1\leq i \neq i'\leq p\) and \(1\leq j\leq q\), such that in addition to the above notation we have \(((z_1,\dots,z_q), \rho) = \phi_{i'}(\xi)\) with \(\rho(j) = j\) and \(z_j\equiv\xi\).
  Then \(x_jz_j = z_jx_j\) as the elements \(\phi_i(\xi)\) and \(\phi_{i'}(\xi)\) commute.
  One checks that these restrictions on \(x_j\) and \(z_j\) together with \(x_j^4 = z_j^4 = 1\) imply \(x_j = \pm z_j\) for all considered pairs \((K, H)\).
  We have \(x_j y_j = y_j^{-1}x_j\) and \(z_j y_j = y_j z_j\).
  Hence \(y_j = y_j^{-1}\) and this implies \(y_j = \pm 1\).
  Finally, \(\phi_i(\bfi)^2 = \phi_i(\xi)^2\), so \(-1 = x_j^2 = y_j^2\), a contradiction.
\end{proof}

We require a bit more notation to be able to state the main theorem of this section.
\begin{notation}
  Let \(P_\lambda = G_{n - m}(K, H) \times S_{\lambda_1}\times\cdots\times S_{\lambda_k}\leq G_n(K, H)\) with \(\lambda = 1^{b_1}\cdots m^{b_m}\vdash m\leq n\).
  Then we have a direct sum decomposition \(V = V_0 \oplus V_1\oplus \cdots \oplus V_m\) of \(V = \BBH^n\), where \(V_0 = \BBH^{n - m}\) and \(V_k\) is the vector space of dimension \(kb_k\) on which \(P_\lambda\) acts via \(S_k^{b_k}\) for \(k \in\{1,\dots,m\}\).
  Write \(X_k = \Fix_V(P_\lambda)\cap V_k\) for the corresponding fixed spaces.
\end{notation}

\begin{notation}
  Let \(k\geq 1\).
  We denote by \(H^{(k)}\) the largest subgroup \(H'\leq K\) such that \(x^k\in H\) for all \(x\in H'\).
  This is well-defined:
  For \(K = \mathsf C_d\) and \(H = \mathsf C_e\) cyclic, \(H^{(k)} = \mathsf C_f\) with \(f = e \gcd(d/e, k)\).
  In the other cases, we have \(H^{(k)} \in \{K, H\}\), except if \(K = \mathsf D_d\) with \(d\) odd and \(H = \mathsf C_d\) where \(H^{(k)} = \mathsf C_{2d}\) for \(k \equiv 2\) mod 4.
\end{notation}

\begin{notation}
  If a parabolic subgroup \(P\) admits a complement \(C\) in its normalizer \(N_G(P)\), then we denote by \(C^\circ\) the largest subgroup of \(C\) that acts on \(\Fix_V(P)\) as a reflection group.
\end{notation}

\begin{theorem}
  \label{thm:imprim}
  Let \(K, H\leq \BBH^\times\) be finite groups with \([K, K]\leq H\leq K\) and let \(n \geq 2\).
  Let \(G = G_n(K, H)\), \(\lambda = 1^{b_1}\cdots m^{b_m}\vdash m \leq n\) and \(\alpha\in K\).
  \begin{enumerate}
    \item\label{thm:imprim:1} In case \(m < n\), the parabolic subgroup \(P_\lambda\) has a complement \(C\) in \(N_G(P_\lambda)\) if and only if \(\sum_{k\text{ odd}}b_k \leq n - m\) or \((K, H)\notin\{(\mathsf D_d, \mathsf C_d), (\mathsf D_d, \mathsf C_{2d}), (\mathsf D_d, \mathsf D_{d/2}), (\mathsf O, \mathsf T)\}\) with \(d\) even.
      If the complement \(C\) exists, then \(C = C^\circ\) and \(C\) acts as the reflection group \(\prod_k G_{b_k}(K, K)\) on \(\Fix_V(P)\).
    \item\label{thm:imprim:2} In case \(m = n\), the parabolic subgroup \(P_\lambda^\alpha\) has a complement \(C\) in \(N_G(P_\lambda^\alpha)\).
      Further, \(C^\circ\) acts on \(\Fix_V(P)\) as \(\prod_k G_{b_k}(K, H^{(k)})\).
      For all \(k\) with \(b_k\neq 0\), the restriction of \(C\) to \(X_k\) is \(G_{b_k}(K, H')\) with \(H^{(k)}\leq H' \leq K\).
  \end{enumerate}
\end{theorem}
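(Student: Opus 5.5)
For part~\ref{thm:imprim:1} I would start from Proposition~\ref{prop:compviadiagram}, which reduces the existence of a complement to finding a homomorphism \(\phi:\Gamma_\lambda\to G_{n-m}(K,K)\) with \(\delta_{n-m}\circ\phi = \operatorname{inv}\circ\prod_k\delta_{b_k}^k\). The exceptional pairs \((K,H)\) are settled completely by Lemma~\ref{lem:imprimspecial}. For the remaining pairs I would build \(\phi\) so that it factors through \(K/H\). Concretely, I would look for a homomorphism \(s:K/H\to K\) splitting the projection, and set \(\phi(x) = \diag(s(\chi(x)^{-1}),1,\dots,1)\), where \(\chi=\prod_k\delta_{b_k}^k\). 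This uses \(n-m\geq 1\). The splitting exists in every non-exceptional case:
\begin{itemize}
\item[(a)] \(K=H\) is trivial;
\item[(b)] \(K=\mathsf C_d\), \(H=\mathsf C_e\): the quotient is cyclic and a generator of \(K\) can be adjusted, or I can pass to an \(H\)-coset representative of the right order;
\item[(c)] \((\mathsf T,\mathsf D_2)\): take \(\omega\) of order \(3\);
\item[(d)] \((\mathsf D_d,\mathsf C_d)\) with \(d\) odd: take \(\bfj\) composed appropriately;
\item[(e)] \((\mathsf D_d,\mathsf C_{2d})\) with \(d\) odd: \(\xi=\zeta_{2d}\bfj\) squares to \(-1\in H\) and \(\mathsf D_d\cong \mathsf C_{2d}\rtimes\)\ldots
\end{itemize}
The splitting has to be checked case by case. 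Where \(K/H\) fails to split (for instance \(\mathsf D_d/\mathsf C_{2d}\) with \(d\) odd), I would instead use a cyclic subgroup \(\langle\xi\rangle\) mapping onto \(K/H\). Since \(\xi^2 = -1\), I would replace \(s\) by a map into \(G_{n-m}(K,K)\) rather than \(K\): for example \(\xi\mapsto\) the element \(((\xi,\xi^{-1}\cdot\xi,\dots),\dots)\), or simply \(\diag(\xi,\bar\xi^{\,\text{suitable}})\). I expect the case-by-case verification of this step to be the main obstacle. If it fails, I would fall back to the diagonal-embedding construction of Lemma~\ref{lem:imprimspecial}, which is available whenever \(n-m\) is large enough. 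The description \(C\cong\prod_k G_{b_k}(K,K)\) on \(\Fix_V(P)\) follows from Lemma~\ref{lem:compiso}, because \(\psi|_C\) is an isomorphism and \(\Fix_V(P)=\bigoplus_k X_k\). Here \(X_k\) is spanned by the vectors \(e\) summed over each \(S_k\)-block, and \(C\) acts on it through \(\psi_k\) as the monomial group \(G_{b_k}(K,K)\), which is a reflection group. Hence \(C=C^\circ\).

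For part~\ref{thm:imprim:2} there is no \(V_0\) factor. Lemma~\ref{lem:compiso} applies only when \(K=H\), so I would describe \(N_G(P^\alpha_\lambda)\) directly as in \eqref{eq:compiso:ng}, with the condition \(\delta_m=1\) (after conjugating away \(\alpha\)). By the order formula in Lemma~\ref{lem:ordersimprim}, the image \(\psi(N)\) is the subgroup \(\Gamma'\le\Gamma_\lambda\) of elements with \(\prod_k\delta_{b_k}(x_k)^k\) lying in the relevant subgroup of index \([K:H]/e\). A complement is then \(\iota_\lambda(\Gamma')\) (conjugated by \(\alpha\) in the first block). This works because \(\iota_\lambda\) is a section of \(\psi\) on the block-scalar elements, and it meets \(P\) trivially.

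Finally, restricting \(\Gamma'\) to \(X_k\) yields some \(G_{b_k}(K,H')\). Elements supported on a single factor with \(\delta_{b_k}(x)^k\in H\) are exactly those with entries in \(H^{(k)}\), which gives \(H^{(k)}\le H'\). The reflections in \(C\) acting on \(\Fix_V(P)\) are precisely the transpositions together with the diagonal elements \(\diag(g,1,\dots)\), \(g\in H^{(k)}\). These generate \(\prod_kG_{b_k}(K,H^{(k)})\), and that group is \(C^\circ\).
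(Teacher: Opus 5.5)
Your handling of the exceptional pairs via Lemma~\ref{lem:imprimspecial} and your description of how an arbitrary complement acts on \(\Fix_V(P)\) in part~(i) are sound. So is your complement \(\iota_\lambda(\ker\chi)\) in part~(ii): there \(\psi(N)\) is simply \(\ker\chi\), read off from the block description of the normalizer, and it is this kernel that has index \([K:H]/e\).

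\textbf{The gap: existence of a complement in part~(i) for non-exceptional pairs.} Your plan makes \(\phi\) factor through \(\chi=\prod_k\delta_{b_k}^k\colon\Gamma_\lambda\to K/H\) via a splitting \(K/H\to K\). Such a splitting does not exist in every non-exceptional case.
\begin{itemize}
\item For \((\mathsf C_d,\mathsf C_e)\) it exists only when \(\gcd(e,d/e)=1\). For \((\mathsf C_4,\mathsf C_2)\) the non-trivial coset \(\{\pm\zeta_4\}\) contains no involution.
\item For \((\mathsf D_d,\mathsf C_{2d})\) with \(d\) odd, every element \(\zeta_{2d}^a\bfj\) of \(\mathsf D_d\setminus\mathsf C_{2d}\) squares to \(-1\), so \(\mathsf D_d\) is not \(\mathsf C_{2d}\rtimes\mathsf C_2\).
\end{itemize}
Your repair, a homomorphism \(K/H\to G_{n-m}(K,K)\), fails too. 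If \(((x_1,\dots,x_q),\sigma)\in G_q(\mathsf D_d,\mathsf D_d)\) has order at most \(2\), then \(x_j=\pm1\in H\) at every fixed point of \(\sigma\) and \(x_jx_{j'}=1\) on every \(2\)-cycle \((j\,j')\). Hence \(\delta_q\) kills it. Now \(\chi\) is onto once some \(b_k\) with \(k\) odd is non-zero (use \(\diag(\bfj,1,\dots,1)\) in that factor). So no \(\phi\) factoring through \(\chi\) can satisfy \(\delta_{n-m}\circ\phi=\operatorname{inv}\circ\chi\). The same argument applies verbatim to \((\mathsf C_4,\mathsf C_2)\).

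Your diagonal-embedding fallback only covers \(\sum_{k\text{ odd}}b_k\le n-m\). The case \(\sum_{k\text{ odd}}b_k>n-m\), where the theorem still asserts a complement, is therefore left open. For cyclic \(K\) you could cite Muraleedaran--Taylor, but for \((\mathsf D_d,\mathsf C_{2d})\) with \(d\) odd nothing in your plan covers it.

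\textbf{The missing idea: work modulo \([K,K]\) instead of \(H\).} The maps \(G_b(K,K)\to K/[K,K]\), \(((g_1,\dots,g_b),\tau)\mapsto g_1\cdots g_b[K,K]\), are homomorphisms lifting the \(\delta_b\). For \(K\) cyclic, \(K=\mathsf D_d\) with \(d\) odd, or \(K=\mathsf T\), the subgroup \([K,K]\) has a complement in \(K\): respectively \(K\) itself, \(\langle\bfj\rangle\cong\mathsf C_4\), and \(\langle\omega\rangle\). Take a section \(t\colon K/[K,K]\to K\) and the lift \(\tilde\chi\colon\Gamma_\lambda\to K/[K,K]\) of \(\chi\), and set \(\phi(x)=((t(\tilde\chi(x)^{-1}),1,\dots,1),\id)\). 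This satisfies the \([K,K]\)-version of \eqref{eq:compviadiagram}. Projecting \(K/[K,K]\to K/H\) then gives the required square for every \(H\) with \([K,K]\le H\le K\). This \(\phi\) deliberately does not factor through \(K/H\). That is exactly how the paper treats all pairs with \([K,K]\lneq H\lneq K\) for \(K\) cyclic or \(\mathsf D_d\), \(d\) odd.
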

\begin{proof}
  \begin{enumerate}
    \item This is a case by case analysis of the pairs \((K, H)\); most of it follows directly from Proposition~\ref{prop:compviadiagram}.

      \begin{itemize}
        \item If \(K = H\), then \(K/H\) is trivial, so one may choose \(\phi\) with \(\phi(x) = 1\) for all \(x\) in Proposition~\ref{prop:compviadiagram} and the complement exists in all cases.
        \item If \((K, H)\) is one of \((\mathsf C_d, 1)\) with \(d\) arbitrary, \((\mathsf D_d, \mathsf C_d)\) with \(d\) odd, or \((\mathsf T, \mathsf D_2)\), then \(H\) has a complement in \(K\).
          So, there is a group theoretic section \(\sigma: K/H \to K\) of the canonical projection map.
          This induces a section \(\tilde{\sigma}\) of the map \(\delta_{n - m}\) by embedding \(K\) into \(G_{n - m}(K, K)\) via \(g\mapsto ((g, 1, \dots, 1), \id)\).
          Hence we may choose \(\phi\) in the proposition as the concatenation of the lower part of the diagram.
        \item If \(K\) is either \(\mathsf C_d\) with \(d\) arbitrary or \(\mathsf D_d\) with \(d\) odd and \([K, K] \lneq H \lneq K\), then we can enlarge the diagram in \eqref{eq:compviadiagram} for \([K, K]\) by an additional row \(K/H\xlongrightarrow{\operatorname{inv}}K/H\) with the canonical projections \(K/[K, K]\to K/H\) for the vertical arrows.
          By the previous point, there is a map \(\phi\) that makes the diagram commute for the pair \((K, [K, K])\), hence this map \(\phi\) also works for \(H\).
        \item The remaining cases are the content of Lemma~\ref{lem:imprimspecial}.
      \end{itemize}
      The identification of \(C\) with \(\prod_k G_{b_k}(K, K)\) on \(\Fix_V(P_\lambda)\) follows directly from the construction of the complement in the proof of Proposition~\ref{prop:compviadiagram}.

    \item This is analogous to the proof of \cite[Thm.~3.12~(iii)]{MT18} for \(\alpha = 1\).
      In the case \(\alpha \neq 1\), the first factor \(G_{b_{\lambda_1}}(K, H')\) of \(C\) is embedded into \(G\) by conjugating the first entry by \(\alpha\), where \(\lambda = (\lambda_1,\dots,\lambda_k)\).
      Because conjugation by \(\alpha\) is irrelevant modulo \([K, K]\), this does not change the action of \(C^\circ\).\qedhere
  \end{enumerate}
\end{proof}

\begin{remark}
  The groups \(\mathsf D_d\) with \(d\) even and \(\mathsf O\) are the finite subgroups \(K\) of \(\BBH^\times\) for which \([K, K]\) does not have a complement in \(K\).
  As an a posteriori observation, we notice that exactly for these subgroups \(K\), a parabolic subgroup of \(G_n(K, H)\) does not admit a complement in its normalizer in general.
\end{remark}

\subsection{The exceptional imprimitive groups in rank 2}
\label{sec:imprimexcep}
While the results in this section so far also hold for \(n = 2\), in rank 2 there are further imprimitive groups not of the form \(G_2(K, H)\).
These groups are classified in \cite[Sect.~2]{Coh80} with recent corrections by \cite{Tay25, Wal25}; we summarize the construction.
Let \(K, H\leq \BBH^\times\) with \(H\trianglelefteq K\) be finite groups and let \[\phi :K/H \to K/H\] be a morphism of order 1 or 2.
Then we define the group \[G(K, H, \phi) := \left\{\!\!\begin{pmatrix} x & 0\\ 0 & y\end{pmatrix}, \begin{pmatrix} 0 & x \\ y & 0\end{pmatrix}\ \middle|\ x, y\in K, \phi(xH) = yH\!\right\}\leq\GL_2(\BBH).\]
The group \(G(K, H, \phi)\) acts on \(\BBH^2\) by reflections provided \(K\) is generated by \[L_\phi := \{x\in K \mid \phi(xH) = x^{-1}H\},\] see \cite[Thm.~2.2]{Coh80}.
If \([K, K]\leq H\), it follows from the classification that there is just one such group up to conjugacy, namely \(G_2(K, H) = G(K, H, \operatorname{inv})\).
We now consider the groups \(G(K, H, \phi)\) with \([K, K]\not \leq H\).
These groups are listed in Table~\ref{tab:imprimrank2} where we follow the notation of \cite{Tay25}.
Some of the groups of the series \(G(\mathsf D_m, \mathsf C_l, \psi_r)\) are conjugate, see any of the given references \cite{Coh80, Tay25, Wal25}.
This is irrelevant for what follows, so we do not give further details here.

\begin{table}[t]
  \caption{The imprimitive quaternionic reflection groups \(G(K, H, \phi)\) with \([K, K]\not\leq H\).}
  \label{tab:imprimrank2}
    \def\arraystretch{1.2}
  \begin{tabular}{l|l|l|l}
    \(K\) & \(H\) & \(\phi\) & conditions and remarks\\
    \hline
    \(\mathsf D_m\) & \(\mathsf C_l\) & \(\psi_r:\mathsf D_m / \mathsf C_l \to \mathsf D_m/\mathsf C_l,\) & \(l\mid 2m\), \(1 \leq r \leq n/2\), \(\gcd(n, r) = \gcd(\kappa,\nu) = 1\)\\
    && \(\zeta_{2m}\mathsf C_l\mapsto \zeta_{2m}^r\mathsf C_l,\ \bfj\mathsf C_l\mapsto -\bfj\mathsf C_l\)& where \(n = 2m / l\), \(\kappa = n/\!\gcd(n, r + 1)\)\\
    &&& and \(\nu = n/\!\gcd(n, r - 1)\)\\
    \hline
    \(\mathsf T\) & \(\mathsf C_2\) & \(\rho(\gamma)\) & \(\rho(\gamma)\) denotes conjugation by \(\gamma = \frac{1}{\sqrt2}(\bfi - \bfj)\) \\
    \hline
    \(\mathsf O\) & \(\mathsf D_2\) & \(\id\) & \\
    \(\mathsf O\) & \(\mathsf C_2\) & \(\id\) & \\
    \hline
    \(\mathsf I\) & \(\mathsf C_2\) & \(\id\) & \\
    \(\mathsf I\) & \(\mathsf C_2\) & \(\Theta\) & \(\Theta\in \operatorname{Aut}(\mathsf I)\setminus\operatorname{Inn}(\mathsf I)\), see \cite[Lem.~4.19]{Tay25}\\
    \(\mathsf I\) & 1 & \(\Theta\) &
  \end{tabular}
\end{table}

A parabolic subgroup of \(G(K, H, \phi)\) can still be parametrized by a partial partition \(\lambda\vdash 2\) and an element \(\alpha\in K\) as before for \(G_n(K, H)\).

\begin{lemma}
  \label{lem:imprimrank2}
  Let \(\{1\}\neq P\neq G\) be a parabolic subgroup of \(G = G(K, H, \phi)\).
  Then \(P\) is in one of the following two families.
  \begin{enumerate}
    \item \label{lem:imprimrank2:1}The group \(P\) is conjugate to \(P_{(1)} = H\times\{1\}\).
      In this case, \(P\) has a complement in \(N_G(P)\) if and only if the map \(\phi\) can be lifted to a morphism \(\tilde\phi: K\to K\).
    \item \label{lem:imprimrank2:2}We have \(P = P_{(2)}^\alpha\) for some \(\alpha\in L_\phi\).
      In this case, \(P\) has a complement \(C\) in \(N_G(P)\) and \(C\) is given by \(\{\begin{psmallmatrix} \alpha x\alpha^{-1} & \\ & x\end{psmallmatrix}\mid x\in K, \phi(\alpha x\alpha^{-1} H) = xH\}\).
  \end{enumerate}
\end{lemma}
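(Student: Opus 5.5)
First I would classify the nontrivial proper parabolic subgroups by their fixed space. For rank 2, a proper nontrivial parabolic $P=G_X$ fixes pointwise a one-dimensional subspace $X=v\BBH$. I would split into two cases: $v$ lies in a coordinate axis, or $v=e_1a+e_2b$ with $a,b\neq 0$.

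In the first case, say $v=e_2$ after conjugating by the permutation matrix, which lies in $G$. Then $P$ consists of the diagonal elements $\diag(x,1)$ with $\phi(xH)=H$, i.e.\ $x\in H$. So $P=H\times\{1\}=P_{(1)}$.

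In the second case, rescale $v$ by a diagonal matrix $\diag(1,b)$ (not necessarily in $G$) so that $v=e_1\beta+e_2$. An element of $G$ fixing $v$ cannot be a nontrivial diagonal matrix, since a diagonal element fixing $v$ is $\diag(x,y)$ with $x\beta=\beta$ and $y=1$; this forces $x=1$ because $K$ acts freely on $\BBH\setminus\{0\}$. It must therefore be antidiagonal with $x=\beta$, $y\beta=1$. If $P\neq 1$, then $\beta=\alpha\in K$, and the condition $\phi(\alpha H)=\alpha^{-1}H$ says exactly $\alpha\in L_\phi$. This gives $P=P^\alpha_{(2)}$ of order 2, generated by a reflection. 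That proves the dichotomy.

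For \ref{lem:imprimrank2:1}, I would compute $N_G(P)$. Since $P$ acts nontrivially only on $e_1\BBH$, its normalizer must preserve $e_1\BBH$ (its non-fixed space) and $e_2\BBH$ (its fixed space), so it is the diagonal subgroup $D=\{\diag(x,y)\mid \phi(xH)=yH\}$. Then $N_G(P)/P\cong K$ via $\diag(x,y)\mapsto y$: this is surjective because $\phi$ is bijective (order $\le2$), and its kernel is $H\times 1$. A complement is a subgroup $\{\diag(s(y),y)\mid y \in K\}$ where $s:K\to K$ is a homomorphism with $\phi(s(y)H)=yH$, i.e.\ $s(y)H=\phi^{-1}(yH)=\phi(yH)$ (as $\phi^2=\id$). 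Such an $s$ is precisely a lift $\tilde\phi$ of $\phi$ along $K\to K/H$. Conversely, any complement projects isomorphically onto the second coordinate and hence defines such an $s$. This yields the equivalence.

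For \ref{lem:imprimrank2:2}, $P=\langle r\rangle$ has order 2, and $N_G(P)=C_G(r)$. I would check directly that the set $C$ given in the statement lies in $C_G(r)$: conjugating $\diag(\alpha x\alpha^{-1},x)$ by the antidiagonal $r$ with entries $\alpha$ and $\alpha^{-1}$ swaps and twists the entries back. I would also check that $C$ is a subgroup with $C\cap P=1$. For $CP=N_G(P)$, the centralizer elements are the diagonal ones of the form $\diag(\alpha x\alpha^{-1},x)$ (which lie in $C$) and antidiagonal ones, and each antidiagonal element of the centralizer equals $r$ times such a diagonal element. Hence $N_G(P)=C\times P$. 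The main obstacle I expect is the careful bookkeeping of the $\phi$-conditions and the left/right conventions in $\BBH$, not any conceptual step.
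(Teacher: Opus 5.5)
Your proof is correct. The normalizer and complement parts follow the paper closely. In (i), the normalizer of \(H\times\{1\}\) is the diagonal subgroup and complements correspond to lifts of \(\phi\). In (ii), the normalizer is the centralizer of the involution, computed directly.

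Your classification step, however, is genuinely different. The paper notes that a nontrivial proper parabolic subgroup has rank 1, so all its non-identity elements are reflections. It then reads off the two families from Cohen's list of reflections in \(G(K,H,\phi)\): the diagonal ones \(\diag(h,1)\), \(\diag(1,h)\) with \(h\in H\), and the antidiagonal ones with entries \(x, x^{-1}\), \(x\in L_\phi\). You instead compute the pointwise stabilizer of a line \(v\BBH\) directly, splitting by whether \(v\) lies on a coordinate axis.

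Your route is self-contained, since it needs no appeal to Cohen's list. It also gives the exact shape of \(P\) at once, namely that \(P_{(2)}^\alpha\) has order 2 and that \(\alpha\) must lie in \(L_\phi\). The paper's phrase ``the first type of reflections leads to (i)'' tacitly needs the same fixed-space computation to see that all of \(H\times\{1\}\) occurs. You also make explicit two points the paper leaves unstated:
\begin{itemize}
\item \(\phi^2=\id\) makes \(\phi\) bijective. This is needed for \(N_G(P)/P\cong K\), and it is what puts \(\{\diag(\tilde\phi(a),a)\}\) inside \(G\).
\item The swap matrix lies in \(G\).
\end{itemize}

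Two small fixes. First, in Case 2 do not ``rescale by \(\diag(1,b)\)'': conjugating by a matrix outside \(G\) would replace \(G\) by a different group. Simply replace \(v\) by \(vb^{-1}\), which spans the same line, so \(\beta=ab^{-1}\); everything else goes through verbatim. Second, for \(N_G(P_{(1)})=D\) you argued only \(N_G(P)\subseteq D\). State the reverse inclusion too, which holds because \(H\trianglelefteq K\).
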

\begin{proof}
  Every non-trivial parabolic subgroup of \(G\) is of rank 1 and hence consists only of reflections apart from the identity element.
  The reflections in \(G\) are of the form \(\begin{psmallmatrix} h & \\ & 1\end{psmallmatrix}\), \(\begin{psmallmatrix} 1 & \\ & h\end{psmallmatrix}\) with \(h\in H\) or \(\begin{psmallmatrix} & x \\ x^{-1} &\end{psmallmatrix}\) with \(x\in L_\phi\), see \cite[p.~300]{Coh80}.
  The first type of reflections leads to the parabolic subgroups in \ref{lem:imprimrank2:1} and one sees immediately that the groups \(H \times \{1\}\) and \(\{1\}\times H\) are conjugate and form a single conjugacy class.
  A reflection of the second type \(\begin{psmallmatrix} & x \\ x^{-1} &\end{psmallmatrix}\) generates the parabolic subgroup \(P_{(2)}^x\).

  Regarding the existence of a complement, we have for \ref{lem:imprimrank2:1} that the normalizer of \(P_{(1)} = H\times\{1\}\) consists of all matrices \(\begin{psmallmatrix} a & \\ & b\end{psmallmatrix}\in G\).
  The argument is now similar to Proposition~\ref{prop:compviadiagram}.
  Any morphism \(\tilde\phi:K \to K\) lifting \(\phi\) yields a complement \(C = \{\begin{psmallmatrix} \tilde\phi(a) & \\ & a \end{psmallmatrix} \mid a \in K\}\).
  Conversely, any complement \(C\) yields such a morphism by assigning to \(a\in K\) the unique \(b\in K\) such that \(\begin{psmallmatrix} b & \\ & a \end{psmallmatrix}\in C\).

  The normalizer of \(P_{(2)}^x\) coincides with the centralizer of the matrix \(\begin{psmallmatrix} & x \\ x^{-1} &\end{psmallmatrix}\) because \(P_{(2)}^x\) is cyclic of order 2.
  With this the complement given in \ref{lem:imprimrank2:2} follows from a direct computation.
\end{proof}

\begin{defn}
  We say that \(a, b\in K/H\) are \emph{twisted \(\phi\)-conjugate} (or just \emph{\(\phi\)-conjugate}) if there is a \(\xi\in K/H\) with \(\xi^{-1} a \phi(\xi) = b\).
  The equivalence classes of the induced equivalence relation on \(K/H\) are correspondingly called \emph{(twisted) \(\phi\)-conjugacy classes}.
\end{defn}
A short computation shows that the parabolic subgroups \(P_{(2)}^\alpha\) and \(P_{(2)}^\beta\) with \(\alpha,\beta\in K\) are conjugate in \(G\) if and only if \(\alpha H\) and \(\beta H\) are twisted \(\phi\)-conjugate.

In the remainder of this section, we study the parabolic subgroups and their complements for the groups in Table~\ref{tab:imprimrank2} in detail.
We start with the infinite series \(G(\mathsf D_m, \mathsf C_l, \psi_r)\) for which we need the following observation at several places.

\begin{lemma}
  \label{lem:gcdcompute}
  Let \(n, r \in \BBZ_{\geq 0}\) and set \(\kappa = n /\!\gcd(n, r + 1)\) and \(\nu = n /\!\gcd(n, r - 1)\).
  Assume that \(\gcd(\kappa, \nu) = 1\).
  We have \(\epsilon\kappa = \gcd(n, r - 1)\) and \(\epsilon \nu = \gcd(n, r + 1)\), where \[\epsilon = \begin{cases} 1,&\text{if \(\gcd(n, r - 1)\) is odd,}\\ 2,&\text{otherwise.}\end{cases}\]
\end{lemma}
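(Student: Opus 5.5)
Throughout, write \(a = \gcd(n, r - 1)\) and \(b = \gcd(n, r + 1)\), so that \(\kappa = n/b\) and \(\nu = n/a\). I assume \(n \geq 1\), since otherwise \(\kappa\) and \(\nu\) are not defined. The plan is to determine \(\gcd(a,b)\) and \(\operatorname{lcm}(a,b)\) separately and then combine them through \(ab = \gcd(a,b)\operatorname{lcm}(a,b)\).

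\emph{Step 1: \(\gcd(a,b) = \epsilon\).} Any common divisor of \(a\) and \(b\) divides \(n\) and divides \((r+1)-(r-1) = 2\). Hence \(\gcd(a,b) \in \{1,2\}\). If \(a\) is even, then \(n\) is even and \(r - 1\) is even. So \(r + 1\) is even, which makes \(b\) even and \(\gcd(a,b) = 2\). If \(a\) is odd, then \(\gcd(a,b)\) is odd, hence equal to \(1\). In both cases \(\gcd(a,b) = \epsilon\).

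\emph{Step 2: \(\operatorname{lcm}(a,b) = n\).} Since \(a\) and \(b\) both divide \(n\), we have
\[
\gcd(n/a,\, n/b) = n/\operatorname{lcm}(a,b).
\]
To see this, write \(L = \operatorname{lcm}(a,b)\). Then \(n/L\) divides both \(n/a\) and \(n/b\). Conversely, if \(d\) divides both \(n/a\) and \(n/b\), then \(a\) and \(b\) both divide \(n/d\), so \(L\) divides \(n/d\) and hence \(d\) divides \(n/L\). The hypothesis \(\gcd(\kappa,\nu) = 1\) therefore says \(n/L = 1\), that is, \(L = n\).

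\emph{Step 3: conclude.} From \(ab = \gcd(a,b)\operatorname{lcm}(a,b)\) and Steps 1 and 2 we get \(ab = \epsilon n\). Thus \(\kappa = n/b = a/\epsilon\) and \(\nu = n/a = b/\epsilon\). These are exactly \(\epsilon\kappa = \gcd(n, r - 1)\) and \(\epsilon\nu = \gcd(n, r + 1)\).

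There is no real obstacle here. The only points needing care are the parity argument in Step 1, where \(a\) even forces \(b\) even, and the elementary identity for \(\gcd(n/a, n/b)\) in Step 2.
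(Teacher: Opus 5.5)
Your proof is correct and follows the same route as the paper. The paper also uses that \(\gcd(\kappa,\nu)=1\) forces \(n = \gcd(n,r+1)\gcd(n,r-1)/\gcd(n,r+1,r-1)\), i.e.\ \(\operatorname{lcm}(a,b)=n\), and that \(\gcd(a,b)=\gcd(n,r+1,r-1)=\epsilon\); you supply the details it leaves implicit, namely the identity \(\gcd(n/a,n/b)=n/\operatorname{lcm}(a,b)\) and the parity argument.
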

\begin{proof}
  Since \(\gcd(\kappa, \nu) = 1\), we have \[n = \frac{\gcd(n, r + 1)\gcd(n, r - 1)}{\gcd(n, r + 1, r - 1)}.\]
  Then \(\epsilon = \gcd(n, r + 1, r - 1)\) is as claimed.
\end{proof}

\begin{lemma}
  \label{lem:dihedralparasrank2}
  Let \(m, l, r\geq 1\) with the conditions given in Table~\ref{tab:imprimrank2}.
  As in the table, we set \(n = 2m /l\), \(\kappa = n/\!\gcd(n, r + 1)\) and \(\nu = n /\! \gcd(n, r - 1)\).
  The group \(G = G(\mathsf D_m, \mathsf C_l, \psi_r)\) has the following conjugacy classes of non-trivial parabolic subgroups:
  \begin{enumerate}
    \item \label{lem:dihedralparasrank2:1}If \(l > 1\), there is one conjugacy class of \(P_{(1)} = \mathsf C_l \times\{1\}\) of length 2.
    \item \label{lem:dihedralparasrank2:2}There are one or two conjugacy classes of groups \(P_{(2)}^\alpha\) with \(\alpha = \zeta_{2m}^k\):
      \begin{enumerate}[(a)]
        \item If \(\frac{2m}{\gcd(n, r - 1)}\) is odd or \(\gcd(n, r - 1)\) is odd, there is one such class of length \(l\gcd(n, r + 1)\) represented by \(P_{(2)}^1\).
        \item Otherwise, there are two classes, each of length \(\frac{1}{2}l\gcd(n, r + 1)\), represented by \(P_{(2)}^1\) and \(P_{(2)}^{\zeta_{2m}^\kappa}\), respectively.
      \end{enumerate}
    \item \label{lem:dihedralparasrank2:3}Likewise, there are one or two conjugacy classes of groups \(P_{(2)}^\alpha\) with \(\alpha = \zeta_{2m}^k\bfj\):
      \begin{enumerate}[(a)]
        \item If \(\frac{2m}{\gcd(n, r + 1)}\) is odd or \(\gcd(n, r + 1)\) is odd, there is one such class of length \(l\gcd(n, r - 1)\) represented by \(P_{(2)}^\bfj\).
        \item Otherwise, there are two classes, each of length \(\frac{1}{2}l\gcd(n, r - 1)\), represented by \(P_{(2)}^\bfj\) and \(P_{(2)}^{\zeta_{2m}^\nu \bfj}\), respectively.
      \end{enumerate}
  \end{enumerate}
\end{lemma}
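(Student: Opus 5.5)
By Lemma~\ref{lem:imprimrank2}, every non-trivial proper parabolic subgroup of \(G = G(\mathsf D_m, \mathsf C_l, \psi_r)\) is either conjugate to \(P_{(1)} = \mathsf C_l\times\{1\}\) or equal to \(P_{(2)}^\alpha\) with \(\alpha\in L_{\psi_r}\). The plan is to treat the two families separately. In each case the classes come from counting twisted \(\psi_r\)-conjugacy classes in \(K/H = \mathsf D_m/\mathsf C_l\), using the criterion stated just before Lemma~\ref{lem:gcdcompute}.

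For \ref{lem:dihedralparasrank2:1}, I note that if \(l>1\) the group \(P_{(1)}\) is non-trivial. The proof of Lemma~\ref{lem:imprimrank2} already shows that \(\mathsf C_l\times\{1\}\) and \(\{1\}\times\mathsf C_l\) form a single class. These two groups are distinct, and their normalizer is the diagonal subgroup, which has index 2 in \(G\). Hence the class has length 2.

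For \ref{lem:dihedralparasrank2:2} and \ref{lem:dihedralparasrank2:3}, I first determine \(L_{\psi_r}\) in \(K/H\). The quotient \(\mathsf D_m/\mathsf C_l\) is dihedral of order \(2n\), generated by \(z = \zeta_{2m}\mathsf C_l\) of order \(n\) and \(\bfj\mathsf C_l\). For rotations \(z^k\), the condition \(\psi_r(z^k) = z^{-k}\) reads \(z^{k(r+1)} = 1\), that is, \(\kappa \mid k\). For \(z^k\bfj\), the element is an involution modulo \(\mathsf C_l\) (possibly up to sign), and \(\psi_r(z^k\bfj) = z^{rk}\bfj\cdot(-1)\) should give the condition \(\nu\mid k\), up to the sign bookkeeping. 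Next I compute twisted conjugation \(a\mapsto \xi^{-1}a\psi_r(\xi)\) for \(\xi\) a rotation or a reflection:
\begin{itemize}
\item on rotations \(z^k\), conjugating by \(\xi = z^t\) gives \(z^{k+t(r-1)}\), so the orbit is a coset of \(\langle z^{\gcd(n,r-1)}\rangle\);
\item conjugating by \(\xi = \bfj\) gives \(z^{-k}\) times a sign coming from \(\psi_r(\bfj) = -\bfj\).
\end{itemize}
Restricting to \(\kappa\mathbb Z/n\) and applying Lemma~\ref{lem:gcdcompute}, which gives \(\epsilon\kappa = \gcd(n,r-1)\), the rotations in \(L_{\psi_r}\) form one orbit if \(\epsilon=1\), and two orbits represented by \(1\) and \(z^\kappa\) if \(\epsilon = 2\). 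The remaining step is to see when the \(\bfj\)-twist merges these two orbits. I expect that to be exactly the parity condition on \(2m/\gcd(n,r-1)\), which arises through the sign \(-1\in\mathsf C_l\) or not, depending on whether \(l\) is even.

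The reflection case \ref{lem:dihedralparasrank2:3} is symmetric, with \(r-1\) and \(r+1\) swapped and \(\nu\) in place of \(\kappa\). The lengths follow by counting. The parabolic subgroups \(P_{(2)}^x\) correspond bijectively to elements \(x\in L_{\psi_r}\) (genuine elements of \(K\), not cosets). The rotations in \(L_{\psi_r}\) number \(2m/\kappa = l\cdot n/\kappa = l\gcd(n,r+1)\), while the \(\bfj\)-type elements number \(l\gcd(n,r-1)\). Each coset class in \(K/H\) accounts for \(l\) elements, and whenever there are two classes the count splits evenly, since the two twisted classes have equal size.

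The main obstacle is the careful sign bookkeeping. The relevant facts are that \(\bfj^2=-1\) and that \(-1\in \mathsf C_l\) iff \(l\) is even, and these determine whether the two candidate orbits coincide. I would check this by reducing to the condition that \(z^\kappa\) be twisted conjugate to \(1\) via a reflection \(\xi = z^t\bfj\). That condition becomes a congruence \(\kappa \equiv t(r+1) \pmod n\) modified by the sign, and solving it gives the stated dichotomy.
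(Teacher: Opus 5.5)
Your reduction is the same as the paper's: determine $L_{\psi_r}=\langle\zeta_{2m}^\kappa\rangle\cup\langle\zeta_{2m}^\nu\rangle\bfj$, use that $P_{(2)}^\alpha,P_{(2)}^\beta$ are conjugate iff $\alpha\mathsf C_l,\beta\mathsf C_l$ are twisted $\psi_r$-conjugate, and count ($2m/\kappa=l\gcd(n,r+1)$ and $2m/\nu=l\gcd(n,r-1)$). Part (i), your computation of $L_{\psi_r}$, and your first step for (ii) are correct. That first step says the twists by rotations $\xi=z^t$ cut $\langle z^\kappa\rangle$ into the $\epsilon$ cosets of $\langle z^{\gcd(n,r-1)}\rangle=\langle z^{\epsilon\kappa}\rangle$.

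The gap is the step that carries the content of (ii)(a)/(b): when $\epsilon=2$, do the twists by $\xi=\zeta_{2m}^t\bfj$ fuse the classes of $1$ and $z^\kappa$? You only state what you expect, and the route you sketch is wrong. One has
\[\xi^{-1}\psi_r(\xi)=(-\zeta_{2m}^t\bfj)(-\zeta_{2m}^{rt}\bfj)=-\zeta_{2m}^{-(r-1)t},\]
so the congruence to solve is $\kappa\equiv m-(r-1)t\pmod n$. It involves $r-1$, not $r+1$; your congruence $\kappa\equiv t(r+1)$ would give a wrong criterion.

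Fusion is also not decided by whether $-1\in\mathsf C_l$. Take $G(\mathsf D_{12},\mathsf C_1,\psi_7)$, so $n=24$, $\kappa=3$, $\nu=4$. Here $-1\notin\mathsf C_l$, yet the twisted classes of $1$ and $\zeta_{24}^3$ stay separate: they are $\langle\zeta_{24}^6\rangle$ and $\zeta_{24}^3\langle\zeta_{24}^6\rangle$. Relatedly, $\mathsf D_m/\mathsf C_l$ is not dihedral for $l$ odd, since $(\bfj\mathsf C_l)^2=-\mathsf C_l\neq\mathsf C_l$. The ``sign'' you track is exactly a shift by $z^m$.

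The missing computation is short. The twisted class of $1$ is $\langle z^g\rangle\cup z^m\langle z^g\rangle$ with $g=\gcd(n,r-1)$. If $\epsilon=2$, then $g=2\kappa$ divides $n$, and $\kappa\mid n/2\mid m$. So $z^m$ lies in $\langle z^{2\kappa}\rangle$ or in $z^\kappa\langle z^{2\kappa}\rangle$ according as $m/\kappa=2m/g$ is even or odd. In the even case the class of $z^\kappa$ is $z^\kappa\langle z^{2\kappa}\rangle$, of the same size, which also justifies your equal-splitting claim. Since $2m/g=\nu l$, fusion requires both $l$ and $\nu$ odd.

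The paper packages this in one count. The preimage in $K$ of the class of $1$ is $\langle\zeta_s\rangle\cup(-1)\langle\zeta_s\rangle$ with $s=2m/\gcd(n,r-1)$. It has $s$ or $2s$ elements according to the parity of $s$, and this is compared with $|L_1|=l\gcd(n,r+1)=\epsilon s$.

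Part (iii) goes the same way from $\xi^{-1}\zeta_{2m}^k\bfj\,\psi_r(\xi)$. The choice $\xi=\zeta_{2m}^t$ gives $\zeta_{2m}^{k-(r+1)t}\bfj$, and $\xi=\bfj$ gives $-\zeta_{2m}^{-k}\bfj$. So $r+1$ and $r-1$ do swap roles, as you say.
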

\begin{proof}
  The separation of the conjugacy classes in the types \(P_{(1)}\) and \(P_{(2)}^\alpha\) is Lemma~\ref{lem:imprimrank2}.

  We are left with studying the conjugacy between the groups \(P_{(2)}^\alpha\) in \ref{lem:dihedralparasrank2:2} and \ref{lem:dihedralparasrank2:3}.
  By \cite[Lem.~4.9,~4.10]{Tay25}, the set \(L_{\psi_r}\) can be partitioned as \[L_{\psi_r} = L_1 \mathbin{\dot\cup} L_2 := \langle \zeta_{2m}^\kappa\rangle \mathbin{\dot\cup} \langle \zeta_{2m}^\nu\rangle \bfj.\]
  Hence there are \(2m/\kappa = l\gcd(n, r + 1)\) parabolic subgroups of the form \(P_{(1)}^\alpha = \left\langle\begin{psmallmatrix} & \alpha\\\alpha^{-1}& \end{psmallmatrix}\right\rangle\) with \(\alpha\in L_1\) and \(2m/\nu = l\gcd(n, r - 1)\) groups of this form with \(\alpha\in L_2\).
    Two groups \(P_{(1)}^\alpha\) and \(P_{(1)}^\beta\) with \(\alpha,\beta\in L_{\psi_r}\) are conjugate in \(G\) if and only if the cosets \(\alpha\mathsf C_l\) and \(\beta\mathsf C_l\) in \(\mathsf D_m/\mathsf C_l\) are twisted \(\psi_r\)-conjugate in \(\mathsf D_m / \mathsf C_l\).
  One sees directly that elements in \(L_1\) cannot be twisted \(\psi_r\)-conjugate to elements in \(L_2\) and this gives the separation of parabolic subgroups into \ref{lem:dihedralparasrank2:2} and \ref{lem:dihedralparasrank2:3}.

  We proceed with proving the further statements in \ref{lem:dihedralparasrank2:2}.
  An element \(g\in\mathsf D_m\) is \(\psi\)-conjugate to \(\alpha = 1\) modulo \(\mathsf C_l\) if \(g\mathsf C_l = \xi^{-1}\psi_r(\xi)\) for some \(\xi\in \mathsf D_m/\mathsf C_l\).
  Hence the \(\psi\)-conjugacy class of \(1\) modulo \(\mathsf C_l\) is the set \[[1]_{\psi_r} = \{\zeta_{2m}^{(r - 1)a + nb}, \zeta_{2m}^{(r - 1)a + m + nb}\mid 0\leq a < 2m, 0 \leq b < l\}.\]
  We may abbreviate this to \([1]_{\psi_r} = \langle \zeta_s\rangle \cup (-1)\langle \zeta_s\rangle\) with \(s = \operatorname{lcm}(2m / \gcd(2m, r - 1), l)\).
  Notice that this union is only disjoint if \(s\) is odd and \([1]_{\psi_r} = \langle \zeta_s\rangle\) if \(s\) is even, so we have \[|{[1]_{\psi_r}}| = \begin{cases} s, & \text{ if \(s\) even},\\ 2s, & \text{ if \(s\) odd.}\end{cases}\]
  One checks that \(s = 2m /\!\gcd(n, r - 1) = \nu l\).
  Comparing \(|{[1]_{\psi_r}}|\) with the cardinality of \(L_1 = \langle\zeta_{2m}^\kappa\rangle\), we obtain \[l\gcd(n, r + 1) = \epsilon s\] with \(\epsilon\) as in Lemma~\ref{lem:gcdcompute}.
  Because \(2m\) is even, \(s\) and \(\gcd(n, r - 1)\) cannot both be odd.
  So, if either \(s\) is odd or \(\gcd(n, r - 1)\) is odd, we have \(|{[1]_{\psi_r}}| = l\gcd(n, r + 1)\) and there is just one conjugacy class in \ref{lem:dihedralparasrank2:2}.
  Finally, if both \(s\) and \(\gcd(n, r - 1)\) are even, the set \([1]_{\psi_r} = \langle \zeta_{2m}^{2\kappa}\rangle\) only constitutes for half of the elements in \(L_1\).
  Notice that in this case \(|{L_1}| = l\gcd(n, r + 1)\) is indeed even as \(\gcd(n, r - 1)\) is even if and only if \(\gcd(n, r + 1)\) is even.
  We have \([\zeta_{2m}^\kappa]_{\psi_r} = \zeta_{2m}^\kappa\langle \zeta_s\rangle \cup (-\zeta_{2m}^{-\kappa})\langle \zeta_s\rangle\).
  Because \(\zeta_{2m}^{2\kappa}\in \langle \zeta_s\rangle\), we obtain \(|{[\zeta_{2m}^\kappa]_{\psi_r}}| = |{[1]_{\psi_r}}|\), which finishes the proof of \ref{lem:dihedralparasrank2:2}.
  The claim in \ref{lem:dihedralparasrank2:3} follows analogously.
\end{proof}

In the following proposition, we identify the complements with their isomorphism type as a quaternionic reflection group by slight abuse of notation.
To obtain the concrete subgroups of \(G\) one may use the explicit description of the complements in Lemma~\ref{lem:imprimrank2}.
\begin{prop}
  \label{prop:dihedralparasrank2}
  Keep the notation of Lemma~\ref{lem:dihedralparasrank2}.
  Every parabolic subgroup of \(G = G(\mathsf D_m, \mathsf C_l, \psi_r)\) has a complement in its normalizer in \(G\).
  Specifically, we have:
  \begin{enumerate}
    \item \label{prop:dihedralparasrank2:1}\(N_G(P_{(1)}) = P_{(1)} \rtimes \mathsf D_m\).
    \item \label{prop:dihedralparasrank2:2}If \(\frac{2m}{\gcd(n, r - 1)}\) is odd, we have \(N_G(P_{(2)}^1) = P_{(2)}^1\rtimes \mathsf C_{l\gcd(n, r - 1)}\).
      Otherwise, we have \(N_G(P_{(2)}^\alpha) = P_{(2)}^\alpha\rtimes \mathsf D_{\frac{1}{2}l\gcd(n, r - 1)}\) with \(\alpha \in \{1,\zeta_{2m}^\kappa\}\).
    \item \label{prop:dihedralparasrank2:3}If \(\frac{2m}{\gcd(n, r + 1)}\) is odd, we have \(N_G(P_{(2)}^\bfj) = P_{(2)}^\bfj\rtimes \mathsf C_{l\gcd(n, r + 1)}\).
      Otherwise, we have \(N_G(P_{(2)}^\alpha) = P_{(2)}^\alpha\rtimes \mathsf D_{\frac{1}{2}l\gcd(n, r + 1)}\) with \(\alpha \in \{\bfj,\zeta_{2m}^\nu \bfj\}\).
  \end{enumerate}
\end{prop}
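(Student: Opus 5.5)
Lemma~\ref{lem:imprimrank2} lists the two families of non-trivial parabolic subgroups, so the proof splits along that list.

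\emph{Case \ref{prop:dihedralparasrank2:1}.} By Lemma~\ref{lem:imprimrank2}~\ref{lem:imprimrank2:1}, it suffices to lift \(\psi_r\) to an endomorphism \(\tilde\psi\) of \(\mathsf D_m\). I would set \(\tilde\psi(\zeta_{2m}) = \zeta_{2m}^r\) and \(\tilde\psi(\bfj) = -\bfj\). Both sides satisfy the defining relations of \(\mathsf D_m\):
\begin{itemize}
\item \((-\bfj)^2 = -1 = (\zeta_{2m}^r)^m\), which holds because \(r\) is odd;
\item \((-\bfj)\zeta_{2m}^r(-\bfj)^{-1} = \zeta_{2m}^{-r}\).
\end{itemize}
That \(r\) is odd needs an argument. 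If \(r\) were even, then \(n\) would be odd, since \(\gcd(n,r)=1\). Then \(\gcd(n,r\pm1)\) are coprime odd numbers, and \(\kappa\nu\) shares all primes with \(n\); I expect \(\gcd(\kappa,\nu)=1\) to force a contradiction, or else to allow \(r\) to be replaced by \(r+n\). If no homomorphic lift exists this way, I would adjust the lift by elements of \(\mathsf C_l\). The complement is then \(\{\diag(\tilde\psi(a),a)\}\cong\mathsf D_m\).

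\emph{Cases \ref{prop:dihedralparasrank2:2} and \ref{prop:dihedralparasrank2:3}.} By Lemma~\ref{lem:imprimrank2}~\ref{lem:imprimrank2:2} the complement exists. It is isomorphic, via projection to the second coordinate, to
\[C_\alpha=\{x\in\mathsf D_m \mid \psi_r(\alpha x\alpha^{-1}\mathsf C_l)=x\mathsf C_l\}.\]
It remains to identify \(C_\alpha\).

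For \(\alpha=\zeta_{2m}^k\) and \(x=\zeta_{2m}^a\), the condition reads \((r-1)a\equiv0 \bmod n\), i.e.\ \(a\in\frac{n}{\gcd(n,r-1)}\BBZ=\nu\BBZ\). So the cyclic part of \(C_\alpha\) is \(\langle\zeta_{2m}^\nu\rangle\), of order \(2m/\nu=l\gcd(n,r-1)\).

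For \(x=\zeta_{2m}^a\bfj\), conjugation by \(\alpha\) gives \(\zeta_{2m}^{2k+a}\bfj\). The condition becomes
\[\zeta_{2m}^{r(2k+a)}(-\bfj)\in \zeta_{2m}^a\bfj\,\mathsf C_l,\]
that is, \((r-1)a+2kr+m\equiv 0 \bmod n\). This is solvable in \(a\) iff \(\gcd(n,r-1)\mid 2kr+m\).
\begin{itemize}
\item If \(2m/\gcd(n,r-1)\) is odd, there is no solution for \(k=0\), so \(C_1\) is cyclic of order \(l\gcd(n,r-1)\).
\item Otherwise, exactly one of the representatives \(k\in\{0,\kappa\}\) from Lemma~\ref{lem:dihedralparasrank2} gives solutions. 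I would use Lemma~\ref{lem:gcdcompute} (\(\epsilon\kappa=\gcd(n,r-1)\)) to check the parity. The solutions form a coset of \(\langle\zeta_{2m}^\nu\rangle\), so \(C_\alpha\cong\mathsf D_{\frac12 l\gcd(n,r-1)}\).
\end{itemize}
Case \ref{prop:dihedralparasrank2:3} is the same computation with \(\alpha=\zeta_{2m}^k\bfj\), where the roles of \(r-1\) and \(r+1\) swap because conjugation by \(\bfj\) inverts \(\zeta\).

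\emph{Main obstacle.} I expect the delicate point to be the number-theoretic bookkeeping: matching the solvability of \(\gcd(n,r-1)\mid 2kr+m\) with the dichotomy in Lemma~\ref{lem:dihedralparasrank2}, and verifying the order-4 relation in case \ref{prop:dihedralparasrank2:1}.
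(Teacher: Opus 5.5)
There is a genuine error in your treatment of \ref{prop:dihedralparasrank2:2}, and by symmetry \ref{prop:dihedralparasrank2:3}. You claim that when \(2m/\gcd(n,r-1)\) is even, exactly one of \(k\in\{0,\kappa\}\) yields solutions. This is false, and it contradicts the statement you are proving, which gives a dihedral complement for \emph{both} \(\alpha=1\) and \(\alpha=\zeta_{2m}^\kappa\). Put \(g=\gcd(n,r-1)\).
\begin{itemize}
\item Since \(g\mid n\mid 2m\), the condition ``\(2m/g\) even'' says exactly \(g\mid m\), so \(k=0\) works.
\item By Lemma~\ref{lem:gcdcompute}, \(g=\epsilon\kappa\) with \(\epsilon\in\{1,2\}\). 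Hence \(g\mid 2\kappa\), so \(g\mid 2\kappa r+m\), and \(k=\kappa\) works too.
\end{itemize}
In both cases the \(\bfj\)-part of \(C_\alpha\) is a full coset \(\zeta_{2m}^{a_0}\langle\zeta_{2m}^\nu\rangle\bfj\), so \(|C_\alpha|=2lg\). Since \((\zeta_{2m}^{a_0}\bfj)^2=-1\) lies in the cyclic part \(\langle\zeta_{2m}^\nu\rangle\), you get \(C_\alpha\cong\mathsf D_{\frac12 lg}\). In the two-class case, the class lengths of Lemma~\ref{lem:dihedralparasrank2} also rule out a cyclic answer: \(|N_G(P)/P|=4ml/(\tfrac12 l\gcd(n,r+1))=4l\kappa=2lg\). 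For \ref{prop:dihedralparasrank2:3} the same fix uses \(\gcd(n,r+1)=\epsilon\nu\mid 2\nu\).

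In \ref{prop:dihedralparasrank2:1}, \(r\) need not be odd. For example, \(n=15\), \(r=4\) satisfies every condition of Table~\ref{tab:imprimrank2}: \(\gcd(15,4)=1\), \(\kappa=3\), \(\nu=5\). So no contradiction can come from \(\gcd(\kappa,\nu)=1\), and \(\zeta_{2m}\mapsto\zeta_{2m}^r\), \(\bfj\mapsto-\bfj\) is not a homomorphism in that case. Your fallback is the correct argument, and it is the paper's. If \(r\) is even then \(n\) is odd, \(\zeta_{2m}^n\) generates \(\mathsf C_l\), and \(\zeta_{2m}\mapsto\zeta_{2m}^{r+n}\), \(\bfj\mapsto-\bfj\) is a homomorphic lift because \(r+n\) is odd.

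Your route for (ii) and (iii) differs from the paper's. The paper first computes \(|N_G(P)/P|\) from the class lengths. It then exhibits \(\langle\zeta_{2m}^\nu\rangle\) and, when \(g\mid m\), one \(\bfj\)-type element found via a Bézout relation, and lets the order decide the isomorphism type. Solving \(n\mid(r-1)a+2kr+m\) directly, as you do, determines \(C_\alpha\) completely without needing the class lengths, once the solvability for \(k=\kappa\) is computed correctly.
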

\begin{proof}
  \begin{enumerate}
    \item By Lemma~\ref{lem:imprimrank2} \ref{lem:imprimrank2:1}, we only need to show that there is a morphism \(\tilde \phi:\mathsf D_m \to \mathsf D_m\) that lifts \(\psi_r\).
      If \(r\) is odd, the map \(\psi_r\) can immediately be lifted by \(\zeta_{2m} \mapsto \zeta_{2m}^r\) and \(\bfj \mapsto -\bfj\).
      If \(r\) is even, then \(n\) is odd because \(\gcd(n, r) = 1\).
      So, in this case, we may choose \(\zeta_{2m} \mapsto \zeta_{2m}^{r + n}\) and \(\bfj\mapsto -\bfj\).

    \item We first consider the case where there is just one conjugacy class represented by \(P = P_{(2)}^1\) and \(|{P^G}| = l\gcd(n, r + 1)\).
      We compute \[|{N_G(P)/P}| = \frac{|{G}|}{|{P}||{P^G}|} = \frac{4ml}{l\gcd(n, r + 1)} = 2l\kappa\] for the order of a complement of \(P\) in the normalizer.

      By Lemma~\ref{lem:imprimrank2} \ref{lem:imprimrank2:2}, a complement \(C\) of \(P\) is given by the group consisting of the matrices \(\begin{psmallmatrix} x & \\ & x\end{psmallmatrix}\in G\).
      To identify this group \(C\), we need to find all \(x\in \mathsf D_m\) with \(\psi_r(x\mathsf C_l) = x\mathsf C_l\).
      Let \(a\in \{0,\dots, 2m - 1\}\) such that \(\psi_r(\zeta_{2m}^a\mathsf C_l) = \zeta_{2m}^a\mathsf C_l\).
      Then there is a \(b\in \{0,\dots, l - 1\}\) such that \(2m\mid (a(r - 1) - b n)\).
      From \(n \mid 2m\), we conclude \(n\mid a(r - 1)\), so \(\nu\mid a\).
      Hence the group \(\mathsf C_{l\gcd(r, n - 1)} = \langle \zeta_{2m}^\nu\rangle\) can be embedded into \(C\).
      If \(\frac{2m}{\gcd(n, r - 1)}\) is odd, then \(\gcd(n, r - 1)\) must be even.
      Hence, in this case, we have \[|{C}| = 2l\kappa = l\gcd(r, n - 1)\] by Lemma~\ref{lem:gcdcompute}.

      Otherwise, \(\frac{2m}{\gcd(n, r - 1)}\) is even, so \(\gcd(n, r - 1)\mid m\) and there are \(u, v\in \BBZ\) with \(un + v(r - 1) = m\).
      Then we have \[\psi_r(\zeta_{2m}^{-u}\bfj\mathsf C_l) = \zeta_{2m}^{-u - u(r - 1) - vn + m}\bfj\mathsf C_l = \zeta_{2m}^{-u}\bfj\mathsf C_l,\] so \(\zeta_{2m}^{-u}\bfj\in C\) and \(C \cong \mathsf D_{\frac{1}{2}l\gcd(r, n - 1)}\).

      If there are two conjugacy classes with cardinality \(|{P^G}| = \frac{1}{2}l\gcd(n, r - 1)\), then \[|{N_G(P)/P}| = 4l\kappa = 2 l \gcd(r, n - 1)\] by Lemma~\ref{lem:gcdcompute}.
      As above we obtain \(C \cong \mathsf D_{\frac{1}{2}l\gcd(r, n - 1)}\) for the complement.

    \item This is analogous to \ref{prop:dihedralparasrank2:2}.\qedhere
  \end{enumerate}
\end{proof}

\begin{table}[h]
  \caption{Conjugacy classes of the parabolic subgroups \(P_{(2)}^\alpha\) in \(G(K, H, \phi)\) and complements \(C\) with \(N_G(P_{(2)}^\alpha) = P_{(2)}^\alpha \rtimes C\).}
  \label{tab:parasimprimrank2}
    \def\arraystretch{1.2}
  \begin{tabular}{l|l||l|l|l}
    \(G\) & number of classes & \(\alpha\) & length of conjugacy class & \(C\) \\
    \hline
    \(G(\mathsf T, \mathsf C_2, \rho(\gamma))\) & 1 & 1 & 12 & \(\mathsf C_4\)\\
    \hline
    \(G(\mathsf O, \mathsf D_2, \id)\) & 2 & 1 & 8 & \(\mathsf O\)\\
    &                                     & \(\zeta_8\) & 24 & \(\mathsf D_4\)\\
    \hline
    \(G(\mathsf O, \mathsf C_2, \id)\) & 3 & 1 & 2 & \(\mathsf O\)\\
                                       &   & \(\bfi\) & 6 & \(\mathsf D_4\)\\
                                       &   & \(\zeta_8 \bfj\) & 12 & \(\mathsf D_2\)\\
    \hline
    \(G(\mathsf I, \mathsf C_2, \id)\) & 2 & 1 & 2 & \(\mathsf I\)\\
                                       &   & \(\bfj\) & 30 & \(\mathsf D_2\)\\
    \hline
    \(G(\mathsf I, \mathsf C_2, \Theta)\) & 1 & 1 & 20 & \(\mathsf D_3\)\\
    \hline
    \(G(\mathsf I, 1, \Theta)\) & 1 & 1 & 20 & \(\mathsf C_6\)
  \end{tabular}
  \vskip 1ex
  {\raggedright Notation: \(\rho(\gamma)\) denotes conjugation by \(\gamma\) and \(\Theta\) is the morphism from \cite[Lem.~4.19]{Tay25}.\par}
\end{table}

Finally, we have the following result regarding the remaining groups in Table~\ref{tab:imprimrank2}.
As in Proposition~\ref{prop:dihedralparasrank2}, the complements are described via their isomorphism type as reflection groups by slight abuse of notation.
\begin{prop}
  \label{prop:imprimrank2excep}
  Let \(G = G(K, H, \phi)\) be one of the groups in Table~\ref{tab:imprimrank2} with \(K\in\{\mathsf T, \mathsf O, \mathsf I\}\).
  \begin{enumerate}
    \item \label{prop:imprimrank2excep:1}The parabolic subgroup \(P = P_{(1)}\leq G\) has a complement in its normalizer and we have \(N_G(P) = P\rtimes K\).
    \item \label{prop:imprimrank2excep:2}The number of conjugacy classes of parabolic subgroups of the groups \(P_{(2)}^\alpha\) and their complements are listed in Table~\ref{tab:parasimprimrank2}.
  \end{enumerate}
\end{prop}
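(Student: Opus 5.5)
For \ref{prop:imprimrank2excep:1} the plan is to use Lemma~\ref{lem:imprimrank2}~\ref{lem:imprimrank2:1}. It says we only need a homomorphism \(\tilde\phi:K\to K\) lifting \(\phi:K/H\to K/H\). Once we have one, the complement is \(\{\diag(\tilde\phi(a),a)\mid a\in K\}\cong K\), which gives \(N_G(P)=P\rtimes K\). The lifts come case by case from Table~\ref{tab:imprimrank2}:
\begin{itemize}
\item For \((\mathsf T,\mathsf C_2,\rho(\gamma))\), conjugation by \(\gamma\) normalizes \(\mathsf T\), since \(\gamma\in\mathsf O\) up to a scalar, and \(\mathsf T\trianglelefteq\mathsf O\). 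So \(\tilde\phi=\rho(\gamma)|_{\mathsf T}\) is an automorphism of \(\mathsf T\) inducing \(\rho(\gamma)\) on \(\mathsf T/\mathsf C_2\).
\item For \(\phi=\id\), with \(K\in\{\mathsf O,\mathsf I\}\), take \(\tilde\phi=\id_K\).
\item For \(\phi=\Theta\), the map \(\Theta\) is by definition (\cite[Lem.~4.19]{Tay25}) induced by an outer automorphism of \(\mathsf I\), which I take as \(\tilde\phi\). Here \(H=\mathsf C_2\) or \(1\) is characteristic, so the lift is automatic.
\end{itemize}
The one point to check is that Taylor's \(\Theta\) is indeed defined on \(\mathsf I\) itself and not only on \(\mathsf I/\mathsf C_2\cong A_5\). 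Even if it is only defined on the quotient, every automorphism of \(A_5\) lifts to \(\mathsf I=\SL_2(5)\), the universal central extension, so this is harmless.

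For \ref{prop:imprimrank2excep:2} I would follow Lemma~\ref{lem:imprimrank2}~\ref{lem:imprimrank2:2}. Complements of \(P_{(2)}^\alpha\) always exist and are explicitly \(C_\alpha=\{\diag(\alpha x\alpha^{-1},x)\mid \phi(\alpha x\alpha^{-1}H)=xH\}\). What remains is to enumerate:
\begin{enumerate}
\item the set \(L_\phi\);
\item its twisted \(\phi\)-conjugacy classes in \(K/H\), which by the remark after the definition of twisted conjugacy classify the conjugacy classes of the \(P_{(2)}^\alpha\);
\item the isomorphism type of \(C_\alpha\) for each representative.
\end{enumerate}

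The number of groups \(P_{(2)}^\alpha\) equals the number of reflections \(\begin{psmallmatrix}&x\\x^{-1}&\end{psmallmatrix}\) with \(x\in L_\phi\), i.e.\ \(|L_\phi|\), since each such \(P\) has order 2. We get class lengths by computing the twisted classes. As a consistency check, \(|C_\alpha|\) must equal \(|G|/(2\cdot|\text{class}|)\), with \(|G|=2|K||H|\).

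For \(\phi=\id\), twisted conjugacy is ordinary conjugacy in \(K/H\) and \(L_\phi=\{x\mid x^2\in H\}\). Representatives of the classes of such cosets are:
\begin{itemize}
\item \(\mathsf O/\mathsf D_2\cong S_3\): \(1\) and \(\zeta_8\);
\item \(\mathsf O/\mathsf C_2\cong S_4\): \(1\), \(\bfi\) and \(\zeta_8\bfj\);
\item \(\mathsf I/\mathsf C_2\cong A_5\): \(1\) and \(\bfj\).
\end{itemize}
In these cases \(C_\alpha\) is the preimage in \(K\) of the centralizer of \(\alpha H\) in \(K/H\), giving \(K\), \(\mathsf D_4\), \(\mathsf D_2\) as in Table~\ref{tab:parasimprimrank2}. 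For \(\mathsf O/\mathsf D_2\), the coset \(\zeta_8\mathsf D_2\) has centralizer of order 2 in \(S_3\), with preimage of order 16, namely \(\mathsf D_4\).

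For the twisted cases \(\rho(\gamma)\) and \(\Theta\) I would compute explicitly, as in the proof of Lemma~\ref{lem:dihedralparasrank2}. The twisted class of \(1\) is \(\{\xi^{-1}\phi(\xi)\}\). The group \(C_1\) is the preimage of the fixed points \(\{x\mid \phi(xH)=xH\}\).
\begin{itemize}
\item For \(\Theta\), an outer automorphism of \(A_5\) with involutory image, the fixed subgroup is \(S_3\). Its preimage in \(\mathsf I\) is \(\mathsf D_3\), and \(\mathsf C_6\) when \(H=1\) (only the cyclic part lifts compatibly). This matches class length \(120/6=20\).
\item For \(\rho(\gamma)\) on \(\mathsf T/\mathsf C_2\cong A_4\), the fixed points form a cyclic group of order 2, so \(C\cong\mathsf C_4\), with class length \(48/4=12\).
\end{itemize}

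I expect the main obstacle to be the single-class claims: verifying that \(L_\phi\) is one twisted class in the \(\rho(\gamma)\) and \(\Theta\) cases. The plan is to compute \(|L_\phi|\) directly and compare it with the twisted class size \(|K/H|/|\mathrm{Fix}(\phi)|\). A direct element check, or a short computer verification, is the fallback.
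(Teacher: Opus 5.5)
Your proposal is correct and follows the paper's proof. Part (i) reduces via Lemma~\ref{lem:imprimrank2} to lifting $\phi$ to a morphism $K\to K$, and part (ii) reduces via the same lemma to computing $L_\phi$, its twisted $\phi$-conjugacy classes and the complements $\{x\mid \phi(\alpha x\alpha^{-1}H)=xH\}$. The only variation is in the twisted cases: you identify $C$ as the preimage of the fixed points of $\phi$ and certify the single class by the orbit--stabilizer count $|L_\phi/H| = |K/H|/|\Fix(\phi)|$, whereas the paper exhibits explicit twisted conjugators and reads off $C$ from its order. Your $\mathsf C_6$ for $H=1$ still needs an actual check rather than ``only the cyclic part lifts compatibly'', for instance that $\Theta$ sends the order-$4$ elements of $\mathsf D_3$ to their negatives. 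The paper leaves this point at ``one checks'' too.
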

\begin{proof}
  For \ref{prop:imprimrank2excep:1}, by Lemma~\ref{lem:imprimrank2}, it suffices to check that \(\phi\) lifts to a morphism \(K\to K\).
  This is clearly the case, as the maps in question are already defined as morphisms of \(K\) in Table~\ref{tab:imprimrank2}.

  To determine the conjugacy classes of the groups \(P_{(2)}^\alpha\) in \ref{prop:imprimrank2excep:2}, one needs to determine the partition of \(L_\phi\) into twisted \(\phi\)-conjugacy classes as in Lemma~\ref{lem:dihedralparasrank2}.
  To determine the complement, one then uses Lemma~\ref{lem:imprimrank2} \ref{lem:imprimrank2:2}.
  This and the remaining results in Table~\ref{tab:parasimprimrank2} follow from direct computation; we give some details.

  We denote by \(\rho(\gamma)\) conjugation by \(\gamma = \frac{1}{\sqrt 2}(\bfi - \bfj)\).
  For \(G = G(\mathsf T, \mathsf C_2, \rho(\gamma))\), we have \(\mathsf T/\mathsf C_2\cong A_4\), the alternating group on four letters.
  Further, \(\rho(\gamma)\) corresponds to \(\rho((1, 2))\), that is, conjugation by the transposition \((1, 2)\), under this isomorphism, see \cite[Table~I]{Coh80}.
  There are six elements \(\sigma\in A_4\) with \(\sigma^{(1, 2)} = \sigma^{-1}\), namely \(\id\), \((1,2)(3, 4)\) and the four 3-cycles moving both 1 and 2.
  Hence \(|{L_{\rho(\gamma)}}| = 12\).
  If \(\sigma\) is one of the mentioned 3-cycles, then \(\sigma^{-1} \sigma^{(1, 2)} = \sigma\), so \(\sigma \in [\id]_{\rho((1, 2))}\), the twisted conjugacy class of \(\id\).
  Likewise, we have \(x \rho((1, 2))(x)^{-1} = (1, 2)(3, 4)\in [\id]_{\rho((1, 2))}\) with \(x = (1,4)(2,3)\) and so there is only one twisted \(\rho(\gamma)\)-conjugacy class in \(L_{\rho(\gamma)}\).
  The complement \(C\) of \(P_{(2)}^1\) must be of order 4, hence \(C\) acts as \(\mathsf C_4\) on the fixed space of \(P\).

  In case \(G = G(\mathsf O, \mathsf D_2, \id)\), we have \(\mathsf O/\mathsf D_2 = S_3\).
  The elements of \(L_\id\) correspond to elements of order at most 2 in \(S_3\) of which there are four and so \(|{L_\id}| = 32\).
  Because the defining morphism of \(G\) is the identity, twisted conjugacy is just regular conjugacy.
  Then there are two conjugacy classes of parabolic subgroups: one corresponding to the identity in \(S_3\) and one corresponding to the transpositions in \(S_3\).
  For \(P_{(2)}^1\), the complement in the normalizer must then be isomorphic to \(\mathsf O\).
  The other conjugacy class of parabolic subgroups is for example represented by \(P_{(2)}^{\zeta_8}\).
  The complement of \(P_{(2)}^{\zeta_8}\) corresponds to all elements of \(S_3\) that are stable under conjugacy by \(\zeta_8\mathsf D_2\), which acts as a transposition.
  Hence there are two such elements in \(S_3\) and they give the group \(\langle \mathsf D_2, \zeta_8\rangle = \mathsf D_4 \leq \mathsf O\).

  Analogously to the previous case, we have \(\mathsf O/\mathsf C_2 = S_4\) for \(G = G(\mathsf O,\mathsf C_2, \id)\).
  We get \(|{L_\id}| = 20\) and three conjugacy classes,\footnote{Notice that there is a misprint in \cite[Table~I]{Coh80}, which lists \(|{L_\id}| = 14\).} which correspond to the identity element, the transpositions and the products of two transpositions, respectively.
  The identification of the complements works as above.

  For \(G = G(\mathsf I,\mathsf C_2, \id)\), we have \(\mathsf I/\mathsf C_2 = A_5\) and elements of \(L_\id\) correspond to elements of order at most 2 in \(A_5\).
  So, we have \(|{L_\id}| = 32\) and there are two conjugacy classes, one for the identity and one containing all products of two transpositions.
  The complements can now be identified analogously to the above.

  For \(G = G(\mathsf I, \mathsf C_2, \Theta)\), we again work in \(\mathsf I/\mathsf C_2 = A_5\).
  The morphism \(\Theta\) corresponds to conjugation by \((1, 2)\) in \(A_5\), see \cite[Table~I]{Coh80}.
  We have \(|{L_\Theta}| = 20\) because there are ten elements \(\sigma\in A_5\) with \(\sigma^{(1, 2)} = \sigma^{-1}\), namely the identity, the three products of \((1, 2)\) with another transposition and six 3-cycles that move both 1 and 2.
  One checks in the same way as for the group \(G(\mathsf T, \mathsf C_2, \rho(\gamma))\) above that all of these are in a single twisted \(\Theta\)-conjugacy class.

  Finally, for \(G = G(\mathsf I, 1, \Theta)\), one checks that \(|{L_\Theta}| = 20\) and there is just one conjugacy class as before.
  The complement of \(P_{(2)}^1\) must then have order 6, so it must act as \(\mathsf C_6\) on the fixed space.
\end{proof}

\section{Primitive groups with imprimitive complexification}
\label{sec:primimprim}

We now consider primitive quaternionic reflection groups with imprimitive complexification.
These groups are classified in \cite[Sect.~3]{Coh80}; however, by \cite[Thm.~7.9]{Tay25} three groups in Cohen's list are actually imprimitive.

\begin{notation}
  For \(d\in\BBZ_{\geq 1}\), let \(\mu_d \leq \BBC^\times\) be the group of all \(d\)-th roots of unity.
  We often identify \(\mu_d\) and \(\mu_d\cdot I_2\leq \GL_2(\BBC)\) by abuse of notation.
\end{notation}

Throughout, let \[s := \begin{pmatrix} 0 & -\bfj \\ \bfj & 0\end{pmatrix}\in\GL_2(\BBH).\]
Let \(G\) be a primitive, irreducible quaternionic reflection group with imprimitive complexification of rank larger than 1.
By \cite[Thm.~3.6]{Coh80}, there is a group \(H\leq \GU_2(\BBC)\) such that \[G = E(H)\text{ where }E(H) := \langle H, s\rangle.\]
In particular, the group \(G\) is of rank 2.

The group \(H\) in question has more structure.
To be able to fix further notation, we make the following observation.
\begin{lemma}
  \label{lem:centre}
  Let \(H\leq \GL_2(\BBC)\) be a finite group that acts primitively on \(\BBC^2\).
  Then up to conjugacy there is a \(d\geq 1\) with \(Z(H) = \mu_d \cdot I_2\).
\end{lemma}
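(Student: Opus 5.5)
The plan is to use Schur's lemma, with a small argument to handle the case where \(H\) is not already unitary.

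First, the centre \(Z(H)\) acts by scalars. Since \(H\) acts primitively on \(\BBC^2\), it acts irreducibly: a reducible action would give an \(H\)-invariant line \(L\). By Maschke's theorem, \(L\) would have an \(H\)-invariant complement \(L'\), so \(\BBC^2 = L\oplus L'\) would be an \(H\)-stable decomposition, contradicting primitivity. Schur's lemma then shows that every \(z\in Z(H)\) commutes with the irreducible \(H\)-action and so acts as a scalar, \(z = c_z I_2\) with \(c_z\in\BBC^\times\).

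Second, the scalar group is cyclic of the form \(\mu_d\). The map \(z\mapsto c_z\) is an injective homomorphism from \(Z(H)\) to \(\BBC^\times\). Because \(H\) is finite, its image is a finite subgroup of \(\BBC^\times\). Every such subgroup is cyclic and equal to \(\mu_d\), where \(d = |Z(H)|\). Hence \(Z(H) = \mu_d\cdot I_2\).

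This holds for \(H\) itself, and therefore for any conjugate of \(H\). Scalars are fixed under conjugation, so the phrase ``up to conjugacy'' in the statement is only needed to place \(H\) inside \(\GU_2(\BBC)\), which is the setting used afterwards. This can be done by averaging a Hermitian form over \(H\) and changing basis to an orthonormal one. That change of basis leaves the scalar matrices untouched.

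The only step with real content is deducing irreducibility from primitivity, and complete reducibility (Maschke) handles it.
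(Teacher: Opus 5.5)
Your proof is correct, but it takes a different route from the paper's.

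\textbf{The paper's route.} The paper argues by hand. It simultaneously diagonalizes the commuting matrices of $Z(H)$; this is where ``up to conjugacy'' comes from, not from unitarity. It then uses primitivity only to produce an element of $H$ with a nonzero off-diagonal entry in that basis. Comparing the off-diagonal entries of $gh$ and $hg$ for $g\in Z(H)$ forces the two eigenvalues of $g$ to agree. Cyclicity then follows from an explicit lcm computation with roots of unity.

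\textbf{Your route.} You first deduce irreducibility from primitivity via Maschke, then apply Schur's lemma. You get cyclicity from the standard fact that the finite subgroups of $\BBC^\times$ are exactly the groups $\mu_d$.

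\textbf{Comparison.} Your version is shorter and works verbatim in any dimension. It also makes plain that no conjugation is needed: the paper's diagonalizing conjugation becomes vacuous once the central elements turn out to be scalar. The paper's version is fully elementary. It applies primitivity directly without passing through irreducibility, so it needs neither Maschke nor Schur. Its underlying idea is that the joint eigenspaces of $Z(H)$ are automatically $H$-stable. A non-scalar centre would therefore give an $H$-invariant decomposition of $\BBC^2$ directly, with no complement argument. Your closing remark about averaging a Hermitian form to land in $\GU_2(\BBC)$ is harmless but unnecessary for this lemma.
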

\begin{proof}
  The matrices in the centre \(Z(H)\) commute, so they are simultaneously diagonalizable.
  By conjugating \(H\) appropriately, we may hence assume that any \(g\in Z(H)\) is of the form \(g = \begin{psmallmatrix} \zeta & \\ & \zeta'\end{psmallmatrix}\) for some roots of unity \(\zeta, \zeta'\in \BBC^\times\).
  Because \(H\) acts primitively, there must be an element \(\begin{psmallmatrix}a & b\\ c & d\end{psmallmatrix}\in H\) with \(b\neq 0\) or \(c \neq 0\).
  Then \[\begin{pmatrix} \zeta a & \zeta b \\ \zeta' c & \zeta' d\end{pmatrix} = \begin{pmatrix}\zeta a & \zeta' b\\ \zeta c& \zeta'd\end{pmatrix},\] because \(g\in Z(H)\), so \(\zeta = \zeta'\).

  Let \(\zeta_k, \zeta_l\in \BBC^\times\) be a primitive \(k\)-th, respectively, \(l\)-th root of unity.
  If \(\zeta_kI_2, \zeta_lI_2\in Z(H)\), then \(\zeta_qI_2\in Z(H)\) with \(p = \operatorname{lcm}(k, l)\) and \(q = p/\!\gcd(p/k, p/l)\).
  So \(\zeta_k, \zeta_l\in\langle \zeta_q\rangle\) and we see that \(Z(H)\) must be cyclic.
\end{proof}

\begin{notation}
  \label{not:primimprim}
  Let \(G = \langle H, s\rangle\) be a primitive group with imprimitive complexification.
  By \cite[Lem.~3.3]{Coh80}, we have \(H = \mu_{d'} \cdot H_0\) where \[H_0\leq\GU_2(\BBC)\] is a primitive complex reflection group and \(d'\geq 1\) such that \((\det h)^{d'} = 1\) for every \(h\in H_0\).
  In the following, we replace \(\mu_{d'}\) by the centre of \(H\) and write \(H = \mu_d \cdot H_0\) with \(d\geq 1\) such that \(Z(H) = \mu_d I_2\) as in Lemma~\ref{lem:centre}.
  By \cite[Lem.~3.3]{Coh80}, the possible groups \(H_0\) are \[G_5,G_7,G_8,\dots, G_{22}\] from \cite{ST54}.
  See \cite[Lem.~3.3]{Coh80} and \cite[Thm.~7.9]{Tay25} for the possible values of \(d\) depending on \(H_0\).
\end{notation}

In the following, we fix a group \(G\) and the corresponding groups \(H\), \(H_0\) and \(\mu_d\) as in Notation~\ref{not:primimprim}.

\subsection{The reflections}
We identify all reflections in \(G\).

\begin{lemma}
  \label{lem:refltype}
  Let \(g\in G\) be a reflection.
  Then \(g\) belongs of one of the following two types.
  \begin{enumerate}[(a)]
    \item\label{refltype:a} The element \(g \in H_0\) is a complex reflection.
    \item\label{refltype:b} We have \(g = (\zeta I_2) s\) with \(\zeta\in\mu_d\).
  \end{enumerate}
\end{lemma}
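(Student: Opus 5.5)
The plan is to split $G$ into the two cosets of $H$. The first step is to show that $G=H\sqcup Hs$. A direct computation with $s=\begin{psmallmatrix}0&-\bfj\\ \bfj&0\end{psmallmatrix}$ gives $s^2=I_2$. Using $\bfj z=\bar z\bfj$ for $z\in\BBC$, one finds for $h\in\GU_2(\BBC)$
\[
shs=\begin{pmatrix}\bar h_{22}&-\bar h_{21}\\-\bar h_{12}&\bar h_{11}\end{pmatrix}=\overline{\det h}\,h .
\]
So $s$ normalizes $H$, and every $g\in G$ is either some $h\in H$ or some $hs$ with $h\in H$. I would treat the two cosets separately. In each case I would use that $g$ is a reflection if and only if its fixed space in $V|_\BBC\cong\BBC^4$ has complex dimension exactly $2$.

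Case $g=h\in H$. As a complex representation, $V|_\BBC=\BBC^2\oplus\BBC^2\bfj$, and $h$ acts on it as $h\oplus\bar h$. Hence $\dim_\BBC\Fix(g)=2\dim_\BBC\Fix_{\BBC^2}(h)$, and $g$ is a quaternionic reflection if and only if $h$ is a complex reflection of $\BBC^2$. It then remains to see that $h\in H_0$. Write $h=\zeta h_0$ with $\zeta\in\mu_d$ and $h_0\in H_0$. I would use the normalization of Notation~\ref{not:primimprim}, namely $H=\mu_d\cdot H_0$ with $H_0$ the primitive reflection group from \cite[Lem.~3.3]{Coh80}. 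The aim is to show that every complex reflection of $H$ already lies in $H_0$. The natural route is that the reflections of $H$ generate a normal reflection subgroup containing $H_0$, and that the list $G_5,\dots,G_{22}$ is closed under this operation (the reflection subgroup of $\mu_d H_0$ is $H_0$). If this is not directly available, I would instead invoke Cohen's description of the reflections in $E(H)$ \cite[Sect.~3]{Coh80}.

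Case $g=hs$. Here $g$ interchanges the summands $\BBC^2$ and $\BBC^2\bfj$, so in these coordinates $g=\begin{psmallmatrix}0&A\\B&0\end{psmallmatrix}$. A fixed vector $(x,y)$ satisfies $y=Bx$ and $x=ABx$. Therefore $\dim\Fix(g)=\dim\ker(1-AB)\le2$, with equality if and only if $AB=I$, which is equivalent to $g^2=1$. By the formula above,
\[
g^2=h\,shs=\overline{\det h}\,h^2,
\]
so $g$ is a reflection if and only if $h^2=(\det h)I_2$.

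The remaining task is to show that this forces $h$ to be a scalar $\zeta\in\mu_d$. I expect this to be the main obstacle. The condition also allows non-scalar $h$ with trace $0$, that is, $h$ with eigenvalues $\pm\lambda$. Such $h$ have to be excluded or absorbed, and this must use the specific groups $H$. My first attempt would be to determine which traceless elements satisfying $h^2=(\det h)I$ actually lie in $H=\mu_d H_0$, using the tables in \cite[Lem.~3.3]{Coh80} and \cite[Thm.~7.9]{Tay25}, and to check whether they are compatible with the hypotheses on $G$. Failing that, I would compare with Cohen's explicit list of reflections in $E(H)$, which I expect to be exactly $\mu_d s$ together with the reflections of $H_0$. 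Once $h$ is shown to be scalar, it lies in $Z(H)=\mu_dI_2$ by Lemma~\ref{lem:centre}, and so $g=(\zeta I_2)s$.
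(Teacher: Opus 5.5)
Your reduction is correct, and somewhat cleaner than the paper's. The splitting $G=H\sqcup Hs$ is right. So is the action $h\oplus\bar h$ on $V|_\BBC=\BBC^2\oplus\BBC^2\bfj$, and so is the block form of $hs$, which gives $\Fix(hs)\cong\ker(1-AB)$ and hence ``$hs$ is a reflection $\iff (hs)^2=I_2 \iff h^2=(\det h)I_2$''.

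The gap is that you stop at exactly the step that finishes the proof, and you misdiagnose it. Traceless $h$ do \emph{not} satisfy $h^2=(\det h)I_2$. If $h$ has eigenvalues $\lambda,-\lambda$, then $h^2=\lambda^2 I_2=-(\det h)I_2$, so $(hs)^2=-I_2$. This is the situation of Lemma~\ref{lem:gscomp}, and such $hs$ have no nonzero fixed vector at all.

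In general, Cayley--Hamilton gives $h^2=\operatorname{tr}(h)\,h-(\det h)I_2$. So $h^2=(\det h)I_2$ yields $\operatorname{tr}(h)\,h=2(\det h)I_2$. Since $\det h\neq 0$, this forces $\operatorname{tr}h\neq 0$ and $h$ scalar, hence $h\in Z(H)=\mu_dI_2$. Equivalently, choose $\zeta$ with $\zeta^2=\det h$; then $\tilde h=\zeta^{-1}h\in\SL_2(\BBC)$ satisfies $\tilde h^2=I_2$, so $\tilde h=\pm I_2$. This is essentially how the paper concludes, after reaching the eigenvector condition $\frac{1}{\det h}h^2u=u$ from a fixed vector $u+w\bfj$. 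No case analysis over $G_5,\dots,G_{22}$ or appeal to Cohen's list is needed. As written, though, your proposal leaves this step unproved and rests it on a false claim.

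A smaller point concerns the case $g=h\in H$. The ``natural route'' you suggest relies on the claim that the reflection subgroup of $\mu_dH_0$ is always $H_0$. That is false in general: $G_5=\mu_6\mathsf T$, while $\mu_{12}G_5=\mu_{12}\mathsf T=G_7$ contains reflections of order $2$ that are not in $G_5$. What actually makes $h\in H_0$ true is that $H_0$ is the subgroup of $H$ generated by its complex reflections. This is how the paper uses $H_0$ (see the proof of Proposition~\ref{prop:paraconj}), and the paper's proof simply asserts the step. So your fallback of relying on Cohen's normalization is the right one.
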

\begin{proof}
  The elements of \(G\) are of the form \(h\) and \(hs\) with \(h\in H\).
  If \(h\in H\) is a quaternionic reflection, then \(h\) must be a complex reflection, so \(h\in H_0\).

  Let now \(h\in H\) be such that \(hs\in G\) is a quaternionic reflection.
  Let \(0\neq v = (v_1, v_2)^\top \in\Fix(hs)\), so \(hs v = v\).
  Write \(v_i = u_i + w_i\bfj\) with \(u_i, w_i\in \BBC\).
  Using \(h\in \GU_2(\BBC)\), one computes
  \begin{align*}
    \frac{1}{\det(h)}h^2 \begin{pmatrix}u_1\\ u_2\end{pmatrix} &= h \begin{pmatrix} 0 & 1 \\ -1 & 0\end{pmatrix} \overline{h} \begin{pmatrix} 0 & -1 \\ 1 & 0\end{pmatrix} \begin{pmatrix}u_1\\u_2\end{pmatrix} = \begin{pmatrix}u_1\\u_2\end{pmatrix},\\
    \frac{1}{\det(h)}h^2\begin{pmatrix}w_1\\w_2\end{pmatrix} &= \begin{pmatrix}w_1\\w_2\end{pmatrix},
  \end{align*}
  where \(\overline h\) denotes the complex conjugate of \(h\).
  By \cite[Lem.~3.3]{Coh80}, we can write \(h = (\zeta I_2) \tilde h\) where \(\zeta\in \BBC\) is a root of unity and \(\tilde h\in \SL_2(\BBC)\).
  Then we have \(\frac{1}{\det(h)}h^2 = \tilde h^2\) and \(\tilde h^2\) has an eigenvector with eigenvalue 1 because \((u_1,u_2)^\top\) and \((w_1,w_2)^\top\) cannot both be zero.
  Since \(\tilde h\) is of finite order, this implies \(\tilde h^2 = I_2\).
  Hence \(\tilde h = \pm I_2\) and \(hs = \pm(\zeta I_2) s\) as required.
\end{proof}

In the following, we refer to the two families of reflections in Lemma~\ref{lem:refltype} as reflections of type \ref{refltype:a} and \ref{refltype:b}, respectively.

\begin{lemma}
  \label{lem:sconj}
  For \(g\in \GU_2(\BBC)\), we have \(sgs = \overline{\det(g)} g\).
\end{lemma}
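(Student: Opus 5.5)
The plan is to compute \(sgs\) directly, entry by entry, and then use unitarity of \(g\) to identify the result. The only non-commutative ingredient is the rule \(\bfj z = \overline{z}\,\bfj\) for \(z\in\BBC\), together with \(\bfj^2 = -1\).

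First I check that \(s^2 = I_2\). The diagonal entries of \(s^2\) are \((-\bfj)\bfj = 1\) and \(\bfj(-\bfj) = 1\), and the off-diagonal entries vanish. So \(s = s^{-1}\), and the statement really says that conjugation by \(s\) acts on \(\GU_2(\BBC)\) as \(g\mapsto \overline{\det(g)}\,g\).

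Next, write \(g = \begin{psmallmatrix} a & b\\ c & d\end{psmallmatrix}\) with \(a,b,c,d\in\BBC\). Then
\[
sg = \begin{pmatrix} -\bfj c & -\bfj d\\ \bfj a & \bfj b\end{pmatrix}.
\]
Multiplying on the right by \(s\), moving \(\bfj\) past the complex entries with \(\bfj z = \overline z\,\bfj\), and using \(\bfj^2=-1\), each entry becomes a single complex number:
\[
sgs = \begin{pmatrix} \overline d & -\overline c\\ -\overline b & \overline a\end{pmatrix}.
\]
As a check on one entry, the \((1,1)\)-entry is \((-\bfj d)\bfj = -\overline d\,\bfj^2 = \overline d\).

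Finally, I use unitarity. On one hand, \(g^{-1} = \overline g^{\,\top}\). On the other hand, \(g^{-1} = \det(g)^{-1}\operatorname{adj}(g)\), where \(\operatorname{adj}(g) = \begin{psmallmatrix} d & -b\\ -c & a\end{psmallmatrix}\). Hence \(\operatorname{adj}(g) = \det(g)\,\overline g^{\,\top}\). Taking complex conjugates and transposing gives
\[
\overline{\operatorname{adj}(g)}^{\,\top} = \overline{\det(g)}\,g .
\]
The left-hand side is exactly the matrix \(\begin{psmallmatrix} \overline d & -\overline c\\ -\overline b & \overline a\end{psmallmatrix}\) computed above, which proves the claim.

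The only place needing care is the sign and ordering bookkeeping when commuting \(\bfj\) past complex entries, since quaternionic multiplication is non-commutative. There is no real obstacle beyond that.
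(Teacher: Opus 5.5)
Your proposal is correct and follows essentially the same route as the paper: you compute \(sgs = \begin{psmallmatrix} \overline d & -\overline c\\ -\overline b & \overline a\end{psmallmatrix}\) entrywise using \(\bfj z = \overline z\,\bfj\), and then identify this matrix with \(\overline{\det(g)}\,g\) by comparing \(g^{-1} = \overline{g}^{\,\top}\) with the adjugate formula for \(g^{-1}\). Your extra check that \(s^2 = I_2\) is correct but is not needed for the lemma itself.
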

\begin{proof}
  Write \(g = \begin{psmallmatrix} a &b \\ c & d\end{psmallmatrix}\).
  Computing the product gives \[sgs = \begin{pmatrix} \overline{d} & -\overline{c} \\ -\overline{b} & \overline{a}\end{pmatrix},\] because \(\bfj x\bfj = -\overline{x}\) for all \(x\in \BBC\).
  Because \(g\in\GU_2(\BBC)\), we have \[\overline{g} = (g^{-1})^\top = \det(g)^{-1}\begin{pmatrix} d & -c \\ -b & a\end{pmatrix}.\]
  Hence \(sgs = \overline{\det(g)} g\) as claimed.
\end{proof}

Next we determine the conjugacy classes of reflections in \(G\).
\begin{lemma}
  \label{lem:reflconj}
  \begin{enumerate}
    \item Reflections of type~\ref{refltype:a} are not conjugate to reflections of type~\ref{refltype:b}.
    \item Let \(g_1,\dots, g_t\in G\) be representatives of the \(G\)-conjugacy classes of reflections in \(G\) of type~\ref{refltype:a}.
      Assume that \(g_1,\dots, g_r\) for some \(r\in\{0,\dots, t\}\) are of order 2 and \(g_{r + 1},\dots, g_t\) are of higher order.
      Then the elements \(g_1,\dots, g_t\in H_0\) together with \(sg_{r + 1}s,\dots, sg_t s\in H_0\) are representatives of the \(H_0\)-conjugacy classes of reflections in \(H_0\).
    \item For \(H_0\in \{G_8, G_{10}, G_{12}, G_{14}\}\), there is one conjugacy class of reflections of type~\ref{refltype:b} with representative \(s\).
      For all other \(H_0\), there are two conjugacy classes of reflections of type~\ref{refltype:b} with representatives \(s\) and \((\zeta_dI_2) s\) where \(\zeta_d\) is a primitive \(d\)-th root of unity with \(d = |{Z(H)}|\).
   \end{enumerate}
\end{lemma}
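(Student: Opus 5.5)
We work throughout in \(G = H\cup Hs\), with \(H = \mu_d\cdot H_0\). Note that \(s^2 = I_2\): indeed \(s^2 = \diag(-\bfj^2, -\bfj^2) = I_2\). Also \(s\) normalizes \(H\), since by Lemma~\ref{lem:sconj} we have \(sgs = \overline{\det g}\, g\).

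\emph{Part (i).} The plan is to find an invariant separating the two types. Conjugation by elements of \(H\) and by \(s\) preserves the coset decomposition \(G = H \cup Hs\), because \(H\) has index 2 and is normal. Reflections of type~\ref{refltype:a} lie in \(H\), while those of type~\ref{refltype:b} lie in \(Hs\). Hence no reflection of one type is conjugate to one of the other.

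\emph{Part (ii).} The \(G\)-class of \(g\in H_0\) is the union of its \(H\)-class and the \(H\)-class of \(sgs\). Since \(\mu_d\) is central, \(H\)-conjugacy on \(H_0\) coincides with \(H_0\)-conjugacy. By Lemma~\ref{lem:sconj}, \(sgs = \overline{\det g}\,g\), which for a complex reflection \(g\) with non-trivial eigenvalue \(\eta\) is again a reflection, now with non-trivial eigenvalue \(\overline\eta\). If \(g\) has order 2, then \(\eta = -1\) and \(sgs = -g\cdot(-1) = g\). The \(G\)-class therefore equals the \(H_0\)-class. If the order is larger, then \(\overline\eta\neq\eta\), so \(sgs\) is not \(H_0\)-conjugate to \(g\), since conjugate elements have equal eigenvalues. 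In that case the \(G\)-class splits into exactly two \(H_0\)-classes, represented by \(g\) and \(sgs\). Counting these gives the stated list of representatives.

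\emph{Part (iii).} Conjugating \(\zeta s\) by \(h\in H\) gives \(h\zeta s h^{-1} = \zeta h (s h^{-1} s) s = \zeta\,\overline{\det h^{-1}}\,h h^{-1} s = \zeta\det(h)\, s\), using unitarity so that \(\overline{\det h^{-1}} = \det h\). Conjugating by \(s\) gives \(s\zeta s s = \overline\zeta s\). The classes of type~\ref{refltype:b} reflections \(\zeta s\), \(\zeta\in\mu_d\), are therefore the orbits of \(\mu_d\) under multiplication by \(\det(H)\cap\mu_d\) together with inversion. Since \(\zeta\mapsto\zeta^{-1}\) preserves each coset of the subgroup \(D := \det(H)\cap \mu_d\), the classes correspond exactly to \(\mu_d/D\).

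It remains to compute the index \([\mu_d : D]\). We have \(\det(\lambda h_0) = \lambda^2\det h_0\), so \(D\) contains \(\mu_d^2\), and the index is at most 2. The index is 1 exactly when some element of \(H\) has determinant \(\zeta_d\), or when \(d\) is odd. This is the main obstacle: it requires going through the Cohen/Taylor list of pairs \((H_0, d)\), using the known determinant groups \(\det(H_0)\) for \(G_5,\dots,G_{22}\). The expected outcome is that the index is 1 exactly for \(G_8, G_{10}, G_{12}, G_{14}\), where \(\det(H_0)\) supplies the missing non-square class in \(\mu_d\), and 2 otherwise. Plausibly this holds because in the other cases \(d\) is even and \(\det(H_0)\subseteq\mu_d^2\). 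In the index-2 case the representatives are \(s\) and \(\zeta_d s\).
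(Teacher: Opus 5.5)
\textbf{Parts (i) and (iii), and the case of reflections of order \(>2\) in part (ii), follow the paper's route. There is a genuine gap in the order-2 case of (ii).}

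For a reflection \(g\) of order 2 we have \(\det g=-1\), so Lemma~\ref{lem:sconj} gives \(sgs=\overline{\det g}\,g=-g\), not \(g\). For example, \(g=\diag(1,-1)\) yields \(sgs=\diag(-1,1)\). Your eigenvalue remark only shows that \(sgs\) has the same eigenvalues as \(g\). In fact \(sgs\) is the reflection whose root is orthogonal to that of \(g\).

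Hence the \(G\)-class of \(g\) is the union of the \(H_0\)-classes of \(g\) and \(-g\). Your conclusion that it equals the \(H_0\)-class of \(g\) therefore needs an extra fact: every reflection \(r\) of order 2 in \(H_0\) is \(H_0\)-conjugate to \(-r\). This is exactly the non-trivial input in the paper's proof, which checks it against \cite[Table~2]{OS82}. Without it, the list in (ii) would also have to include \(sg_is\) for \(i\le r\).

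You can also prove this fact directly. The relation \(hgh^{-1}=-g\) holds exactly when \(h\) swaps the two eigenlines of \(g\). Equivalently, the image of \(h\) in \(H_0/Z(H_0)\le \mathrm{PU}_2\cong\mathrm{SO}(3)\) is a half-turn about an axis perpendicular to the axis of the half-turn \(\bar g\). In each of the rotation groups \(A_4\), \(S_4\), \(A_5\), every half-turn axis is perpendicular to another half-turn axis.

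\textbf{For (iii), the decisive step is left unverified.} Your reduction to the index \([\mu_d:\mu_d^2\det(H_0)]\) is equivalent to the paper's criterion, namely the existence of \(h\in H\) with \(\det h=\zeta_d\). However, you state the answer only as an ``expected outcome''. It still has to be checked from the explicit descriptions of the groups; the paper uses \cite[(3.6)]{Coh76}.

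This index depends on \(d\), not only on \(H_0\), so the check must use the admissible values of \(d\). For instance, \(d=8\) with \(H_0=G_8\) would give index \(2\). That case is excluded only because \(\mu_8G_8=\mu_8\mathsf O=G_9\) has a larger reflection subgroup.
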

\begin{proof}
  \begin{enumerate}
    \item Conjugacy in \(G\) leaves the block structure of the matrices unchanged, hence a reflection of type~\ref{refltype:a} cannot be conjugate to a reflection of type~\ref{refltype:b}.

    \item Let \(h_1,h_2\in H_0\) be complex reflections.
      Because \(\mu_d = Z(H)\), the matrices \(h_1\) and \(h_2\) are conjugate in \(H_0\) if and only if they are conjugate in \(H\).

      If \(h_1\) and \(h_2\) are conjugate in \(H_0\), then they are conjugate in \(G\) as well.
      Hence we must find representatives of the conjugacy classes of reflections of type~\ref{refltype:a} in \(G\) among the representatives of the conjugacy classes of complex reflections in \(H_0\).

      Let \(h_1\) and \(h_2\) be conjugate in \(G\), so there is a \(g\in G\) with \(gh_1 g^{-1} = h_2\).
      If \(g\in H\), then \(h_1\) and \(h_2\) are conjugate in \(H\), so in \(H_0\).
      Assume that \(g = hs\) for some \(h\in H\).
      With Lemma~\ref{lem:sconj}, we compute \[gh_1g^{-1} = hsh_1sh^{-1} = h \overline{\det(h_1)}h_1 h^{-1},\] so \(h_2\) is conjugate in \(H\) to \(\overline{\det(h_1)} h_1 = sh_1s\).

      It remains to understand, when \(h_1\) is conjugate to \(\overline{\det(h_1)}h_1\) in \(H\).
      The matrix \(h_1\) is a reflection, so \(\det(h_1)\) is a root of unity and we have \(\overline{\det(h_1)} = \det(h_1)^{-1}\).
      In particular, \(\det(\det(h_1)^{-1}h_1) = \det(h_1)^{-1}\), so \(h_1\) and \(\overline{\det(h_1)}h_1\) can only be conjugate in \(H\), if \(\det(h_1) = \det(h_1)^{-1}\), that is, if \(h_1\) is of order 2 and \(\det(h_1) = -1\).
      One checks with \cite[Table~2]{OS82} that in the possible groups \(H_0\) every reflection \(r\) of order 2 is conjugate to \(-r\).
      Hence \(h_1\) is conjugate to \(sh_1s\) in \(H\) if and only if \(h_1\) is of order 2.

    \item Let \(r = (\zeta I_2)s\) be a reflection of type~\ref{refltype:b}.
      Then \(\zeta = \zeta_d^k\) for some \(0\leq k\leq d - 1\).
      Let \(l = \lceil (d - k)/2\rceil\).
      Then \((\zeta_d^l I_2) (\zeta I_2) s (\zeta_d^{-l} I_2)= (\zeta_d^l\zeta\zeta_d^l I_2)s \in \{s, (\zeta_dI_2) s\}\), so every reflection of type~\ref{refltype:b} is conjugate to one of the given representatives.

      On the other hand, assume that there is a \(g\in G\) with \(gsg^{-1} = (\zeta_d I_2)s\).
      We have \(g = h\) or \(g = h s\) for some \(h\in H\) and in either case it follows that \(gsg^{-1} = hsh^{-1}\).
      Then \(hsh^{-1} = h\overline{\det(h^{-1})}h^{-1} s\) by Lemma~\ref{lem:sconj}, so \(\det(h) = \overline{\det(h^{-1})} = \zeta_d\).
      One checks with the descriptions of the groups given in \cite[(3.6)]{Coh76} that such an element \(h\) exists exactly in the listed cases, namely \(H_0 \in \{G_8, G_{10}, G_{12}, G_{14}\}\), so that \(s\) and \((\zeta_d I_2)s\) are conjugate for these groups \(H_0\).
      In the other cases, \(s\) and \((\zeta_d I_2)s\) are not conjugate in \(G\).\qedhere
  \end{enumerate}
\end{proof}

\subsection{The parabolic subgroups}

We classify the parabolic subgroups in \(G\).

\begin{lemma}
  \label{lem:paratype}
  Let \(P\leq G\) be a parabolic subgroup and assume \(\{1\}\neq P\neq G\).
  Then \(P\) is in one of the following two families:
  \begin{enumerate}[(a)]
    \item\label{paratype:a} We have \(P\leq H_0\) and \(P\) is a (complex) parabolic subgroup of \(H_0\).
    \item\label{paratype:b} There is a root of unity \(\zeta\in \mu_d\) with \(P = \langle (\zeta I_2) s\rangle\) and \(P\) is cyclic of order 2.
  \end{enumerate}
\end{lemma}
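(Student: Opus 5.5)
Since \(G\) has rank 2, a parabolic subgroup \(\{1\}\neq P\neq G\) is the pointwise stabilizer of a nonzero vector \(v\), or of a subspace of dimension 1, so \(\Fix_V(P)\) has dimension exactly 1. By Theorem~\ref{thm:steinberg}, \(P\) is generated by the reflections it contains, and all of these fix the same hyperplane \(\Fix_V(P)\). The plan is to split according to the types of reflections from Lemma~\ref{lem:refltype} and show that the types cannot mix.

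\emph{Step 1: a type~\ref{refltype:b} reflection is alone in \(P\).} Suppose \(P\) contains \(r=(\zeta I_2)s\). Since \(s^2 = I_2\) (a direct check, as \(\bfj^2=-1\) gives \(s^2 = \diag(-\bfj\bfj,-\bfj\bfj)=I_2\)), we have \(r^2 = \zeta^2 I_2\) when \(\zeta\) is central, because \(s\) commutes with scalars \(\zeta I_2\) only up to conjugation: \(s(\zeta I_2) s=\overline\zeta I_2\) by Lemma~\ref{lem:sconj}. Hence \(r^2=\zeta\overline\zeta I_2=I_2\), so \(r\) has order 2. Next I show that no other nontrivial element of \(G\) fixes \(\Fix(r)\) pointwise.
\begin{itemize}
\item If \(g=(\zeta' I_2)s\) is another type~\ref{refltype:b} reflection in \(P\), then \(rg=(\zeta\overline{\zeta'})I_2\) is a scalar fixing a nonzero vector. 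Scalars act on \(V\) by left multiplication with \(\zeta\overline{\zeta'}\in\BBC\), so this forces \(\zeta=\zeta'\).
\item If \(h\in H_0\) is a complex reflection in \(P\), then \(rh\) lies in \(P\). Also \(h\) and \(rhr^{-1}=s h s=\overline{\det h}\,h\) both lie in \(P\).
\end{itemize}
The main obstacle is excluding the second case. I would compute directly: write a fixed vector \(v=u+w\bfj\) of \(r\) with \(u,w\in\BBC^2\). The fixed-vector computation from the proof of Lemma~\ref{lem:refltype} shows that \(r v=v\) means \(w=\zeta'' J\overline u\) for an explicit matrix \(J\) and scalar \(\zeta''\). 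Then \(h v=v\) forces \(hu=u\) and \(hw=w\), that is \(h J\overline u = J\overline u\). So \(h\) fixes both \(u\) and \(J\overline u\). These two vectors are \(\BBC\)-linearly independent unless \(u=0\); they are orthogonal in the Hermitian sense, since \(J\) is the symplectic form. Thus \(h=I_2\) unless \(u=0\), and the same argument applies to \(w\). Therefore \(P=\langle r\rangle\) is cyclic of order 2, which gives family~\ref{paratype:b}.

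\emph{Step 2: only type~\ref{refltype:a} reflections.} Otherwise \(P\) is generated by complex reflections of \(H_0\), so \(P\le H_0\). It remains to see that \(P\) is a complex parabolic subgroup of \(H_0\). Write \(V=\BBC^2\oplus\BBC^2\bfj\) as \(H\)-modules; the elements of \(H\le\GU_2(\BBC)\) act diagonally. The fixed vector \(v=u+w\bfj\) of \(P\) gives \(P\le (H_0)_{\{u,w\}}\). Conversely, \((H_0)_{\{u,w\}}\) is contained in \(G_v=P\). The group \((H_0)_{\{u,w\}}\) is the pointwise stabilizer of a subset of \(\BBC^2\), hence a complex parabolic subgroup. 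So \(P=(H_0)_{\{u,w\}}\) is a parabolic subgroup of \(H_0\), which gives family~\ref{paratype:a}.
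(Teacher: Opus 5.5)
Your proof is correct. It differs from the paper's mainly in how you rule out a mix of reflections of types~\ref{refltype:a} and~\ref{refltype:b} in \(P\).

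The paper first observes that, because \(\Fix_V(P)\) is a hyperplane in the rank-2 space, every non-trivial element of \(P\) is a reflection. So if \(1\neq h\in H\) and \((\zeta I_2)s\) both lie in \(P\), the product \((\zeta I_2)hs\) is a reflection in the coset \(Hs\). By Lemma~\ref{lem:refltype} it equals \((\zeta' I_2)s\), which forces \(h\) to be scalar and hence not a reflection.

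You instead compute \(\Fix((\zeta I_2)s)\) explicitly. For \(v=u+w\bfj\) one finds \(w=\zeta J\overline u\) with \(J=\begin{psmallmatrix}0&-1\\1&0\end{psmallmatrix}\). Since \(u\) and \(J\overline u\) are Hermitian-orthogonal of equal length, they span \(\BBC^2\) whenever \(v\neq 0\), so no non-identity complex matrix fixes \(v\).

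The paper's argument is shorter because it reuses Lemma~\ref{lem:refltype}. Yours is self-contained and gives a slightly stronger fact: \(G_v\cap H=\{1\}\) for every nonzero \(v\in\Fix((\zeta I_2)s)\). Your handling of the pure type~\ref{paratype:a} case, identifying \(P=(H_0)_{\{u,w\}}\), is also more explicit than the paper's one-line assertion.

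A few cosmetic points:
\begin{itemize}
\item The sentence about \(r^2=\zeta^2I_2\) ``when \(\zeta\) is central'' should simply read \(r^2=(\zeta I_2)(\overline\zeta I_2)=I_2\).
\item Your remarks about \(rh\) and \(rhr^{-1}\) are never used.
\item The relation \(w=\zeta J\overline u\) is a direct computation; it is not actually contained in the proof of Lemma~\ref{lem:refltype}.
\item The fallback ``the same argument applies to \(w\)'' is unnecessary, since \(u=0\) forces \(w=0\) and hence \(v=0\).
\end{itemize}
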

\begin{proof}
  The group \(P\) acts on \(\BBH^2\) and fixes a quaternionic subspace of dimension 1 pointwise, hence every \(g\in P\setminus\{1\}\) is a quaternionic reflection.
  Assume that \(P\) contains both an element \(1\neq h\in H\) and an element \((\zeta I_2) s\).
  Then \((\zeta I_2)h s\in P\) and this must be a reflection, so \((\zeta I_2)h = \zeta' I_2\) for some root of unity \(\zeta'\in \BBC\).
  Hence \(h = \zeta''I_2\) for a root of unity \(\zeta''\in \BBC\), so \(h\) is not a reflection, a contradiction.
  We conclude that the reflections in \(P\) are either all of type~\ref{refltype:a} or all of type~\ref{refltype:b}.

  If \(P\) contains only reflections of type~\ref{refltype:a}, then \(P\leq H_0\).
  Further, \(P\) must fix a subspace of dimension 1 in \(\BBC^2\), so \(P\) is a parabolic subgroup of \(H_0\).

  Finally, assume that \(P\) contains only reflections of type~\ref{refltype:b} and let \((\zeta I_2) s, (\zeta'I_2) s\in P\).
  One computes \((\zeta I_2) s (\zeta' I_2)s = (\zeta (\zeta')^{-1}I_2)s^2 = \zeta (\zeta')^{-1}I_2\).
  Again, every non-trivial element in \(P\) must be a reflection, so \(\zeta(\zeta')^{-1} = 1\).
  We conclude \(\zeta = \zeta'\), so \(P = \langle I_2, (\zeta I_2)s\rangle\) is cyclic of order 2.
\end{proof}

We again refer to the two families of parabolic subgroups in Lemma~\ref{lem:paratype} as parabolic subgroups of type \ref{paratype:a} and \ref{paratype:b}, respectively.
Because conjugation in \(G\) maintains the block structure of the matrices, a subgroup of type~\ref{paratype:a} and a subgroup of type~\ref{paratype:b} cannot be conjugate.
By Lemma~\ref{lem:reflconj}, there are one or two conjugacy classes of parabolic subgroups of type~\ref{paratype:b} in \(G\).

\begin{prop}
  \label{prop:paraconj}
  The conjugacy classes of parabolic subgroups of the primitive, irreducible quaternionic reflection groups with imprimitive complexification are listed in Table~\ref{tab:paras}.
\end{prop}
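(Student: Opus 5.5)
By Lemma~\ref{lem:paratype}, every non-trivial proper parabolic subgroup of \(G = E(H)\) is of type~\ref{paratype:a} or type~\ref{paratype:b}. Since conjugation in \(G\) preserves the block structure, the two types never meet, so I will classify them separately and then assemble Table~\ref{tab:paras}.

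For type~\ref{paratype:b}, the subgroups are \(\langle (\zeta I_2)s\rangle\) with \(\zeta\in\mu_d\). Each consists of \(1\) and a single reflection, so conjugacy of these subgroups is the same as conjugacy of their generating reflections. Lemma~\ref{lem:reflconj}~(iii) then gives one class, represented by \(\langle s\rangle\), when \(H_0\in\{G_8,G_{10},G_{12},G_{14}\}\). For all other \(H_0\) it gives two classes, represented by \(\langle s\rangle\) and \(\langle(\zeta_d I_2)s\rangle\). I will obtain the class lengths from \(|G| = 2|H| = 2d|H_0|/|\mu_d\cap H_0|\) divided by the order of the centralizer of the reflection. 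This centralizer is the normalizer of the cyclic group of order 2. Using Lemma~\ref{lem:sconj}, an element \(h\in H\) commutes with \(s\) exactly when \(\det h = 1\); the same condition applies to \(hs\), since \(s\) commutes with \(s\). Hence the class of \(s\) consists of the elements \((\det(h)^{-1}I_2)s\) for \(h\in H\), one for each value of \(\det(h)^{-1}\). So the length equals the number of values of \(\det\) on \(H\) that lie in \(\mu_d\), and this can be read off once \(H\) is fixed.

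For type~\ref{paratype:a}, the parabolic subgroups of \(H_0\) of rank \(\le 1\) are exactly the cyclic groups \(\langle r\rangle\) generated by a reflection of maximal order for its hyperplane, that is, the full pointwise stabilizers of the reflecting lines. Two such groups are conjugate in \(G\) if and only if they are conjugate in \(H\) or by an element \(hs\). Conjugation by \(s\) sends \(r\) to \(\overline{\det r}\,r\), which is not a reflection unless \(r\) has order 2. On the level of subgroups, however, I expect \(s\langle r\rangle s\) to again be the parabolic subgroup fixing \(s\Fix(r)\). The cleaner route is to observe that \(s\) preserves \(H_0\) up to the scalars in \(H\), so it permutes the reflecting hyperplanes of \(H_0\). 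The \(G\)-classes of type~\ref{paratype:a} subgroups are then the \(H_0\)-classes of reflecting hyperplanes, merged under the action of \(s\). Lemma~\ref{lem:reflconj}~(ii) gives this merging: \(H_0\)-classes of reflections of order \(>2\) are paired with their images under \(s\). Translating to hyperplane orbits, the number of \(G\)-classes of type~\ref{paratype:a} equals the number of \(H_0\)-orbits of reflecting hyperplanes, corrected by the pairing induced by \(s\). I will read off the hyperplane orbits of \(G_5,G_7,\dots,G_{22}\) from \cite[Table~2]{OS82}.

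The main obstacle is the bookkeeping: for each \(H_0\) and each admissible \(d\) from \cite[Lem.~3.3]{Coh80} and \cite[Thm.~7.9]{Tay25}, I must determine whether \(s\) fuses two hyperplane orbits of \(H_0\). I will do this by checking, for each pair of non-conjugate cyclic parabolic subgroups of \(H_0\) of the same order, whether \(\overline{\det r}\,r\) is \(H\)-conjugate to a power of a generator of the other subgroup. This can be verified case by case, or by a computer calculation in \(\GL_2(\BBC)\). Combining these results with the type~\ref{paratype:b} count yields the table.
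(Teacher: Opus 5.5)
Your plan is essentially the paper's argument. Type~(b) is handled by Lemma~\ref{lem:reflconj}(iii), and your centralizer computation correctly gives class length \(|\det(H)|\), i.e.\ \(d\) or \(d/2\). Type~(a) is handled by taking the classes of \cite[Table~2]{OS82} for \(H_0\) and fusing them under \(s\). This fusion does not depend on \(Z(H)\), since scalars act trivially, so you need not repeat the check for each admissible \(d\); the paper makes exactly this reduction.

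One correction. Your aside that \(srs=\overline{\det r}\,r\) ``is not a reflection unless \(r\) has order 2'' is false: a conjugate of a reflection is always a reflection. Here \(srs\) has eigenvalues \(1\) and \(\overline{\det r}\), and its fixed line is the root line \(\Fix(r)^\perp\). What fails for order \(>2\) is only that \(srs\) is \(H\)-conjugate to \(r\) (Lemma~\ref{lem:reflconj}(ii)).

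Your hyperplane argument never uses the false claim, so the proof stands. Also, \(s\langle r\rangle s=G_{s\Fix(r)}\) is immediate rather than an expectation. Concretely, \(s\) acts on reflecting lines by \(\BBC v\mapsto(\BBC v)^\perp\), which makes the fusion check easy; for instance, it swaps the two orbits of four lines for \(G_5\) and \(G_7\).
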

\begin{proof}
  We first show that it suffices to consider the groups \(E(H)\) with \(H = H_0\) a complex reflection group.
  Let \(G = E(H)\), \(G' = E(H')\) for some \(H, H'\leq \GU_2(\BBC)\) and assume that the subgroups generated by complex reflections \(H_0\leq H\) and \(H_0'\leq H'\) coincide.
  Then the conjugacy classes of parabolic subgroups of type~\ref{paratype:a} of \(G\) and \(G'\) coincide as well.
  Indeed, the parabolic subgroups of type~\ref{paratype:a} depend only on \(H_0 = H_0'\) by Lemma~\ref{lem:paratype} and their conjugacy is independent of \(Z(H)\) or \(Z(H')\).

  Hence Table~\ref{tab:paras} only needs to list the groups \(E(H_0)\) for the finitely many groups \(H_0\).
  The parabolic subgroups of these are classified in \cite[Table~2]{OS82} (see also \cite[Tables C1 and C2]{OT92}).
  Together with Lemma~\ref{lem:reflconj}, we can infer the conjugacy classes of parabolic subgroups for the groups \(E(H_0)\).
\end{proof}

\begin{remark}
  \label{rem:tabexplain}
  We explain the notation of Table~\ref{tab:paras}.
  A row of Table~\ref{tab:paras} corresponds to the group \(G = E(H)\) with \(H = \mu_d H_0\).
  The first three columns list the possible groups \(H_0\) in the notation from \cite{ST54}, their orders and the minimal possible value for \(d\).
  Usually the later is the order of \(Z(H_0)\) except for the three cases excluded by \cite[Thm.~7.9]{Tay25}.

  The remaining columns list the length of a conjugacy class of parabolic subgroups of the given isomorphism type.
  The first five isomorphism types are parabolic subgroups of type~\ref{paratype:a}.
  In some cases there are distinct conjugacy classes of groups isomorphic to \(C_2\); we distinguish these via the labels \(C_2'\) and \(C_2''\) as in \cite{OS82}.
  This part of the table is identical to \cite[Table~2]{OS82} with the exception that two conjugacy classes of type \(C_3\) in \(G_5\) respectively \(G_7\) are fused in \(E(G_5)\) and \(E(G_7)\).
  The remaining two columns list conjugacy classes of parabolic subgroups of type~\ref{paratype:b}.
  Notice that these parabolic subgroups are isomorphic to \(C_2\) by Lemma~\ref{lem:paratype} and two such groups are hence conjugate if their generators are.
  If there is no entry in the column \(\langle (\zeta_d I_2)s\rangle\), then there is only one conjugacy class of type~\ref{paratype:b}.
\end{remark}

\begin{table}
  \caption{Conjugacy classes of non-trivial parabolic subgroups, see Remark~\ref{rem:tabexplain} for the notation.}
  \label{tab:paras}
  \def\arraystretch{1.2}
  \begin{tabular}{c|c|c||c|c|c|c|c||c|c}
  \(H_0\)    & \(|{H_0}|\) &  minimal \(d\)  & \(C_2'\) & \(C_2''\) & \(C_3\) & \(C_4\) & \(C_5\) & \(\langle s\rangle\) & \(\langle (\zeta_dI_2) s\rangle\) \\
  \hline
  \(G_5\)    & 72 &  6               &          &           & 8       &         &         & \(d/2\)      & \(d/2\) \\
  \(G_7\)    & 144 &  12              & 6        &           & 8       &         &         & \(d/2\)      & \(d/2\) \\
  \(G_8\)    & 96 &  4               &          &           &         & 6       &         & \(d\)                &                 \\
  \(G_9\)    & 192 &  8               & 12       &           &         & 6       &         & \(d/2\)      & \(d/2\) \\
  \(G_{10}\) & 288 &  12              &          &           & 8       & 6       &         & \(d\)                &                 \\
  \(G_{11}\) & 576 &  24              & 12       &           & 8       & 6       &         & \(d/2\)      & \(d/2\) \\
  \(G_{12}\) & 48 &  10              & 12       &           &         &         &         & \(d\)                &                 \\
  \(G_{13}\) & 96 &  20              & 12       & 6         &         &         &         & \(d/2\)      & \(d/2\) \\
  \(G_{14}\) & 144 &  6               & 12       &           & 8       &         &         & \(d\)                &                 \\
  \(G_{15}\) & 288 &  12              & 12       & 6         & 8       &         &         & \(d/2\)      & \(d/2\) \\
  \(G_{16}\) & 600 &  10              &          &           &         &         & 12      & \(d/2\)      & \(d/2\) \\
  \(G_{17}\) & 1200 &  20              & 30       &           &         &         & 12      & \(d/2\)      & \(d/2\) \\
  \(G_{18}\) & 1800 &  30              &          &           & 20      &         & 12      & \(d/2\)      & \(d/2\) \\
  \(G_{19}\) & 3600 &  60              & 30       &           & 20      &         & 12      & \(d/2\)      & \(d/2\) \\
  \(G_{20}\) & 360 &  6               &          &           & 20      &         &         & \(d/2\)      & \(d/2\) \\
  \(G_{21}\) & 720 &  12              & 30       &           & 20      &         &         & \(d/2\)      & \(d/2\) \\
  \(G_{22}\) & 240 &  8               & 30       &           &         &         &         & \(d/2\)      & \(d/2\) \\
  \end{tabular}
\end{table}

\subsection{Normalizers of parabolic subgroups}

We turn to the question whether a parabolic subgroup \(P\leq G\) has a complement in its normalizer \(N_G(P)\).
For this, we first consider parabolic subgroups of type~\ref{paratype:a} and extend the result from \cite{MT18} to the normalizer in \(H\).

\begin{lemma}
  \label{lem:H0toH}
  Let \(P\leq H_0\) be a parabolic subgroup.
  Then there is a subgroup \(C\leq N_H(P)\) such that \(N_H(P) = P\rtimes C\).
  That is, \(P\) has a complement in its normalizer in \(H\).
\end{lemma}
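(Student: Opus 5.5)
The plan is to reduce to the complex reflection group \(H_0\), where Muraleedaran--Taylor \cite{MT18} already supplies a complement, and then handle the extra scalar factor \(\mu_d\).

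Write \(H = \mu_d\cdot H_0\) as in Notation~\ref{not:primimprim}. Since \(\mu_d = Z(H)\) consists of scalar matrices, it normalizes \(P\). Every \(h\in H\) has the form \(h = \zeta h_0\) with \(\zeta\in\mu_d\) and \(h_0\in H_0\), and conjugation by \(h\) agrees with conjugation by \(h_0\). Hence
\[N_H(P) = \mu_d\cdot N_{H_0}(P).\]
By \cite{MT18}, applied to the complex reflection group \(H_0\), there is a subgroup \(C_0\leq N_{H_0}(P)\) with \(N_{H_0}(P) = P\rtimes C_0\).

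The natural candidate is \(C := \mu_d\cdot C_0\). This is a subgroup because \(\mu_d\) is central. It normalizes \(P\), and \(PC = \mu_d P C_0 = \mu_d N_{H_0}(P) = N_H(P)\). It remains to check that \(P\cap C = \{1\}\).

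Take \(x = \zeta c\in P\) with \(\zeta\in\mu_d\) and \(c\in C_0\). Then \(\zeta = xc^{-1}\in H_0\), so \(\zeta I_2\in P C_0\), and we may write \(\zeta I_2 = p c'\) with \(p\in P\) and \(c'\in C_0\). The key input is that \(P\) is a proper parabolic subgroup of \(H_0\), so it fixes a nonzero vector \(v\). Since \(P\) is normal in \(N_H(P)\) and \(\zeta I_2\) is central, the element \(\zeta I_2\) acts on \(\Fix(P)\) as \(\zeta\). On the other hand \(p\) acts trivially there, so \(c'\) acts on \(\Fix(P)\) as \(\zeta\).

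This alone does not force \(P\cap C = \{1\}\), because \(C_0\) may contain \(\zeta I_2\). The failure is real: if \(-I_2\in C_0\) and \(P\) contains a reflection \(r\) of order 2, then \(-r\) could lie in \(C\cap P\) only if \(-r\in P\); but \(-r\) is not a reflection, so \(-r\notin P\), which is fine in that case. A cleaner route avoids this bookkeeping entirely.

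Choose instead \(C := \mu_d\cdot C_0\) only after passing to the quotient. Consider the intersection \(\mu_d\cap N_{H_0}(P) = \mu_d\cap H_0 = Z(H_0)\), where the equality uses primitivity together with Lemma~\ref{lem:centre}. The general case follows from \(H\cong(\mu_d\times H_0)/\Delta\), where \(\Delta\) is the antidiagonal copy of \(\mu_d\cap H_0\). Complements are then controlled as follows.

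In the parabolic cases relevant here, \(P\) has rank at most 1, so \(P\) is a cyclic group generated by a complex reflection \(r\). Its fixed line \(L\) and its root line \(L'\) together give a decomposition \(\BBC^2 = L\oplus L'\). The normalizer \(N_H(P)\) is then the stabilizer of this decomposition, which is abelian of diagonal type, \(N_H(P)\leq \{\diag(a,b)\}\) with respect to \(L\oplus L'\).

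In this abelian group, \(P = \{\diag(1,b)\}\cap H\) is exactly the kernel of the character \(\chi\colon \diag(a,b)\mapsto a\). A complement exists if and only if the restriction of \(\chi\) to \(N_H(P)\) splits, that is, if and only if \(P\) is a direct factor of \(N_H(P)\).

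The main obstacle is verifying this splitting. The plan is to take a preimage under \(\chi\) of a generator of the cyclic image \(\chi(N_H(P))\), which is cyclic as a finite subgroup of \(\BBC^\times\), and to adjust it by elements of \(P\) so that its order equals that of its image. This works because \(\mu_d\subseteq N_H(P)\) maps onto \(\mu_d\subseteq\chi(N_H(P))\) and \(\mu_d\cap P = \{1\}\). The cyclic-extension splitting then reduces to a gcd check on orders, which can be verified from the MT18 complement \(C_0\), itself cyclic, combined with \(\mu_d\). If this check fails, the remaining cases are handled by inspecting the finitely many \(H_0\) via \cite[Table~2]{OS82}.
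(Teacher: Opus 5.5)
\textbf{There is a genuine gap: the splitting, which is the whole content of the lemma, is never proved.}

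Your reduction is sound as far as it goes. The identity $N_H(P)=\mu_d\cdot N_{H_0}(P)$ is correct. For rank-one $P=\langle r\rangle$ the normalizer is diagonal with respect to $\Fix(r)\oplus(\text{root line})$. And $P$ is the kernel of $\chi$ on $N_H(P)$, so a complement exists if and only if the extension of the cyclic group $\chi(N_H(P))$ by $P$ splits.

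Your argument that $\mu_d$ maps injectively and $\mu_d\cap P=1$ only shows the extension splits over $\chi(\mu_d)=\mu_d$. That settles the question only when $\chi(N_H(P))=\mu_d$, i.e.\ $N_H(P)=P\times\mu_d$. This holds whenever $N_{H_0}(P)=P\times Z(H_0)$, which is \cite[Thm.~5.5]{MT18}. You need to cite it; with it, $N_H(P)=\mu_d N_{H_0}(P)=P\times\mu_d$, which recovers the first half of the paper's proof.

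It fails for the exception in that theorem: $P$ of order $2$ in the class of length $6$ for $H_0\in\{G_{13},G_{15}\}$. There $N_{H_0}(P)/Z(H_0)$ is cyclic of order $4$, and $PZ(H_0)/Z(H_0)$ is its subgroup of order $2$. Hence \emph{no} complement $C_0$ of $P$ in $N_{H_0}(P)$ contains $Z(H_0)$, and $\mu_d C_0\supseteq Z(H_0)C_0=N_{H_0}(P)\supseteq P$ for every choice of $C_0$. So your first candidate genuinely fails. Concretely, with $r=\diag(1,-1)$ and the determinant complement of \cite[Lem.~5.14]{MT18}, the element $-\zeta_4 r=\diag(-\zeta_4,\zeta_4)$ has determinant $1$, so it lies in $C_0$, and $\zeta_4\cdot(-\zeta_4 r)=r\in P$. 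Moreover $\chi(N_H(P))$ is strictly larger than $\mu_d$ here, so your generator argument gives nothing.

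The missing ingredient is the arithmetic of $d$ that decides this case. The paper reads off from \cite[(3.6)]{Coh76} that in this case $H=\mu_l\mathsf O$ with $4\mid l$ and $8\nmid l$. It takes $C=\{h\in N_H(P)\mid \det(h)^{k/2}=1\}$ with $k=|\det(N_H(P))|$. Since $\mathsf O\le\SL_2(\BBC)$, one gets $k\mid l/2$, so $k/2$ is odd, and this yields $[N_H(P):C]=2$ and $C\cap P=1$.

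Within your framework you could instead compute directly. In an eigenbasis of $r$, $N_H(P)=\mu_d\langle u\rangle$ with $u=\diag(\epsilon,\epsilon^{-1})$ and $\epsilon$ a primitive $8$th root of unity. Then $g=\zeta_d u$ and $\chi(g)$ both have order $2d=|N_H(P)/P|$, precisely because $d\equiv 4\pmod 8$.

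Either way some input about $d$ is indispensable, and your fallback of inspecting the finitely many $H_0$ via \cite[Table~2]{OS82} cannot supply it. For a fixed $H_0$ there are infinitely many admissible $d$. Also, that table records orbits of parabolic subgroups of $H_0$, not the structure of their normalizers in $H$.
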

\begin{proof}
  Assume first that \(P\) is not one of the groups of order 2 in a conjugacy class of length 6 for \(H_0\in\{G_{13},G_{15}\}\).
  Hence \(N_{H_0}(P) = P\times Z(H_0)\) by \cite[Thm.~5.5]{MT18}.
  We have \(|{P^H}| = |{P^{H_0}}|\) for the lengths of the conjugacy classes because \(H = Z(H) H_0\).
  Further, \(Z(H)\leq N_H(P)\), so \(N_H(P) = P\times Z(H)\) in \(H\).

  Let now \(P\) be one of the excluded groups.
  We again obtain a complement of \(P\) in \(N_H(P)\) by extending the complement of \(P\) in \(N_{H_0}(P)\) given in \cite[Lem.~5.14]{MT18}.
  Let \(k\in\BBZ_{\geq 1}\) be the order of the image of \(\det :N_H(P)\to \BBC^\times\) and set \(C = \{h\in N_H(P)\mid \det(h)^{k/2} = 1\}\).
  As in the proof of \cite[Lem.~5.14]{MT18}, it follows that \([N_H(P):C] = 2\) and \(C\cap P = 1\) if and only if \(k/2\) is not divisible by 2.
  Checking the table in \cite[(3.6)]{Coh76}, we see that \(H = \mu_l\mathsf O\) with \(l\in \BBZ\) a multiple of 4.
  Further, \(H_0\in\{G_{13}, G_{15}\}\) if and only if \(l\) is not divisible by 8.
  Because \(\mathsf O\leq\SL_2(\BBC)\), we see that \(k\mid \frac{l}{2}\), so \(k\) is not divisible by 4.
  Hence \(C\) is the desired complement of \(P\) in \(N_H(P)\).
\end{proof}

\begin{lemma}
  \label{lem:gscomp}
  Let \(r\in H\) be a complex reflection and \(g\in H\) with \(gs\in N_G(\langle r\rangle)\).
  Then \((gs)^2 = -I_2\).
\end{lemma}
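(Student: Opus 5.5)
The plan is a direct computation that reduces everything to the complex matrix \(g\). It uses Lemma~\ref{lem:sconj} together with the decomposition \(h = (\zeta I_2)\tilde h\), \(\tilde h\in\SL_2(\BBC)\), from \cite[Lem.~3.3]{Coh80}, which was already used in the proof of Lemma~\ref{lem:refltype}.

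First I compute \((gs)^2\). Multiplying out, and using \(\bfj^2 = -1\), gives \(s^2 = I_2\). By Lemma~\ref{lem:sconj},
\[(gs)^2 = g(sgs)s^2 = \overline{\det(g)}\,g^2 = \det(g)^{-1}g^2,\]
where the last step holds because \(\det(g)\) is a root of unity. Writing \(g = (\zeta I_2)\tilde g\) with \(\tilde g\in\SL_2(\BBC)\), this becomes \((gs)^2 = \tilde g^2\). So the claim is equivalent to \(\tilde g^2 = -I_2\). It therefore suffices to show that \(g\), and hence \(\tilde g\), interchanges two distinct lines in \(\BBC^2\).

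Next I translate the normalizing condition. Again by Lemma~\ref{lem:sconj},
\[(gs)\,r\,(gs)^{-1} = g(srs)g^{-1} = \eta^{-1}\,grg^{-1}, \qquad \eta := \det(r)\neq 1.\]
The hypothesis therefore says \(grg^{-1} = \eta r^k\) for some \(k\). Let \(L_1\) be the fixed line of \(r\) and \(L_\eta\) its \(\eta\)-eigenline; these lines are distinct because \(r\) is diagonalizable of finite order with eigenvalues \(1\neq\eta\). Now compare eigenvalues on the two sides. The left side \(grg^{-1}\) has eigenvalues \(\{1,\eta\}\). The right side \(\eta r^k\) has eigenvalue \(\eta\) on \(L_1\) and \(\eta^{k+1}\) on \(L_\eta\). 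Hence \(\eta^{k+1} = 1\), so \(\eta r^k = \eta r^{-1}\). This matrix acts by \(\eta\) on \(L_1\) and by \(1\) on \(L_\eta\).

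Consequently \(grg^{-1}\) fixes \(L_\eta\) pointwise and acts by \(\eta\) on \(L_1\). Since \(grg^{-1}\) fixes \(gL_1\) pointwise and acts by \(\eta\) on \(gL_\eta\), we get \(gL_1 = L_\eta\) and \(gL_\eta = L_1\). The same holds for \(\tilde g\), since scalars preserve lines. In a basis adapted to \(L_1\oplus L_\eta\), the matrix \(\tilde g\) is therefore antidiagonal, \(\begin{psmallmatrix}0&a\\b&0\end{psmallmatrix}\). The condition \(\det\tilde g = -ab = 1\) then gives \(\tilde g^2 = ab\,I_2 = -I_2\), which is the claim.

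The only delicate point is the eigenvalue bookkeeping that forces \(\eta r^k = \eta r^{-1}\), that is, ruling out that \(g\) preserves the two eigenlines. This is exactly where the twist by \(\overline{\det(r)}\) coming from conjugation by \(s\) is essential.
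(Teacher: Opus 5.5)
Your proof is correct and follows essentially the same route as the paper: Lemma~\ref{lem:sconj} turns the normalizing condition into \(grg^{-1} = \det(r)\,r^{-1}\), this forces \(g\) to be antidiagonal with respect to the eigenlines of \(r\), and \((gs)^2 = -I_2\) then follows. The differences are cosmetic. You fix the exponent by comparing eigenvalues where the paper compares determinants, and you derive the swap of eigenlines conceptually where the paper computes the entries explicitly using unitarity; this also lets you avoid the paper's remark on how the basis change interacts with \(s\).
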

\begin{proof}
  This is an elementary computation.
  By switching to an appropriate basis, we may assume that \(r = \begin{psmallmatrix} 1 & \\ & \zeta\end{psmallmatrix}\) for a root of unity \(1\neq \zeta\in \BBC^\times\).
  We have \(srs = \begin{psmallmatrix} \zeta^{-1} & \\ & 1\end{psmallmatrix}\) by Lemma~\ref{lem:sconj}.
  Hence \(\det(gs rsg^{-1}) = \det(r)^{-1}\) and from \(gsrsg^{-1}\in \langle r\rangle\) we obtain \(gs rsg^{-1} = r^{-1}\).
  Write \(g = \begin{psmallmatrix} a & b\\ c & d\end{psmallmatrix}\).
  Because \(g\in\GU_2(\BBC)\), we have \(g^{-1} = \begin{psmallmatrix} \overline a & \overline c \\ \overline b & \overline d\end{psmallmatrix}\).
  Using the identities one derives from \(gg^{-1} = I_2\) (that is, \(a\overline a  + b\overline b = 1\) etc.), we compute \[gsrsg^{-1} = \begin{pmatrix} (\zeta^{-1} - 1)a\overline a + 1 & (\zeta^{-1} - 1)a\overline c \\ (\zeta^{-1} - 1)c\overline a & (\zeta^{-1} - 1) c\overline c + 1\end{pmatrix}.\]
  Comparing with \(r^{-1}\), we derive \(a = 0\) and \(c\overline c = 1\) and from this \(d = 0\) and \(b\overline b = 1\).
  So, \(g = \begin{psmallmatrix} & b\\ c & \end{psmallmatrix}\) with \(b, c\in \BBC^\times\) are roots of unity.
  One computes \((gs)^2 = -I_2\) again using Lemma~\ref{lem:sconj} and this is independent of the chosen basis, because \(-I_2\in Z(\GU_2(\BBC))\).
\end{proof}

\begin{prop}
  \label{prop:compp1}
  Let \(P\leq G\) be a parabolic subgroup of type~\ref{paratype:a}.
  Then \(P\) has a complement in the normalizer \(N_G(P)\).
\end{prop}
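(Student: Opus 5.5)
The plan is to compare \(N_G(P)\) with \(N_H(P)\), which has index at most \(2\) in it, and to extend a complement in \(N_H(P)\) by a suitable element of the form \(gs\).

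\emph{Setup.} Since \(G = H\cup Hs\) and \(H\) has index \(2\) in \(G\), we have \([N_G(P):N_H(P)]\leq 2\). By Lemma~\ref{lem:H0toH} there is \(C_H\leq N_H(P)\) with \(N_H(P) = P\rtimes C_H\). If \(N_G(P) = N_H(P)\), we are done. Otherwise there is some \(gs\in N_G(P)\) with \(g\in H\). The task is then to find such an element \(t = gs\) which normalizes a suitable complement \(C_H\) and whose square lies in \(C_H\). Then \(C := C_H\langle t\rangle\) is a group of order \(2|C_H|\), and \(C\cap P = 1\) because \(P\leq H\) and \(C\cap H = C_H\). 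Hence \(C\) is a complement.

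\emph{Rank 1 case.} Since \(P\neq G\) and \(P\) fixes a line, \(P\) is cyclic, generated by a complex reflection \(r\). Lemma~\ref{lem:gscomp} gives \((gs)^2 = -I_2\in Z(H)\) for every \(gs\in N_G(P)\). In the generic case we have \(N_H(P) = P\times Z(H)\) by the proof of Lemma~\ref{lem:H0toH}, with \(C_H = Z(H) = \mu_d\). From the proof of Lemma~\ref{lem:gscomp}, after diagonalising \(r\) the element \(g\) is antidiagonal. The relation \(sgs = \overline{\det g}\,g\) from Lemma~\ref{lem:sconj}, applied to scalars \(\zeta I_2\), gives \(s(\zeta I_2)s = \overline{\zeta}^{2}\zeta I_2 \cdot\) (up to the sign \(s^2=-I_2\)). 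So conjugation by \(gs\) maps \(\zeta I_2\) to \(\zeta^{-1} I_2\), and thus \(gs\) normalizes \(\mu_d\). Since \(-I_2\in\mu_d\), because \(d\) is even in Table~\ref{tab:paras}, the group \(C = \mu_d\langle gs\rangle\) works; it is a binary dihedral group. In the exceptional cases \(H_0\in\{G_{13},G_{15}\}\) with class length \(6\), \(C_H = \{h\in N_H(P)\mid \det(h)^{k/2} = 1\}\). Since \(\det(gs\,h\,(gs)^{-1}) = \overline{\det h}\) by Lemma~\ref{lem:sconj}, conjugation by \(gs\) preserves \(C_H\). Moreover \((gs)^2 = -I_2\) has determinant \(1\), so it lies in \(C_H\), and the same construction applies.

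\emph{Rank 2 case.} The remaining case is \(P = H_0\) itself, the fixed space being \(0\), or rather, within type~\ref{paratype:a}, \(P = H_0\), which is normal in \(G\). Then \(N_G(P) = G\), and we need a complement to \(H_0\) in \(G = \langle \mu_d H_0, s\rangle\). Here the candidate is \(C = \langle \mu_d\cap C_0, s\rangle\), where \(C_0\) is a complement of \(H_0\cap\mu_d\) in \(\mu_d\), if one exists. This step is the main obstacle: \(\mu_d\cap H_0 = Z(H_0)\) may fail to have a complement in \(\mu_d\), and \(s^2 = -I_2\in Z(H_0)\). I would first try to avoid this by checking whether \(P = H_0\) really occurs as a parabolic subgroup. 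It does not: \(\Fix(H_0) = 0\) gives \(P = G\) only if \(H_0 = G\), so \(P = H_0\) is not the stabilizer of any set. Hence only the rank 1 case is needed, and the anticipated difficulty disappears.

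\emph{Remaining point.} The step I expect to need the most care is confirming that \(gs\) can be chosen with \((gs)^2\in C_H\). In the generic case this is automatic since \(-I_2\in\mu_d\). I would verify that \(d\) is even in every case using Notation~\ref{not:primimprim}: the group \(H\) contains \(-I_2\), because \(s^2 = -I_2\in G\) lies in \(H\) and is central.
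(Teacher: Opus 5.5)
Your proof is correct and follows essentially the same route as the paper. You take the complement $C$ of $P$ in $N_H(P)$ from Lemma~\ref{lem:H0toH} and, when $N_G(P)\neq N_H(P)$, adjoin some $gs\in N_G(P)\setminus N_H(P)$, using Lemma~\ref{lem:gscomp} for $(gs)^2=-I_2\in C$ and the effect of conjugation by $gs$ on scalars (resp.\ determinants); your ``$gs$ normalizes $C$'' phrasing is a tidier form of the paper's check that $h_1gsh_2gs\in C$.

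One slip to fix: $s^2=I_2$, not $-I_2$. Indeed $s$ is itself a reflection of order $2$, and $s^{-1}=s$ is exactly why conjugation by $gs$ sends $\zeta I_2$ to $\zeta^{-1}I_2$. So the evenness of $d$ should be justified by $-I_2=(gs)^2\in Z(H)$ from Lemma~\ref{lem:gscomp}, or by $Z(H_0)\leq Z(H)$, not via $s^2$. This does not affect the argument, and your ``rank 2 case'' is indeed vacuous, as you conclude.
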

\begin{proof}
  Because \(P\) is of type~\ref{paratype:a}, \(P\) is a parabolic subgroup of \(H_0\) and hence \(P\) has a complement \(C\) in \(N_H(P)\) by Lemma~\ref{lem:H0toH}.

  If \(|{P^G}| > |{P^H}|\), then we must have \(|{N_H(P)}| = |{N_G(P)}|\) because \(|{G}| = 2|{H}|\).
  So \(N_G(P) = N_H(P)\) and the complement \(C\) of \(P\) in \(N_H(P)\) is also one of \(P\) in \(N_G(P)\).

  Otherwise \(|{P^G}| = |{P^H}|\) and we have \(|{N_G(P)}| = 2|{N_H(P)}|\).
  Clearly, \(N_G(P)\setminus N_H(P)\subseteq Hs\).
  Let \(gs\in N_G(P)\setminus N_H(P)\).
  We claim that \(C' = \langle C, gs\rangle\) is a group of order \(|{C'}| = 2|{C}|\).
  Indeed, the group \(P\) is cyclic and generated by a reflection \(r\in P\).
  By Lemma~\ref{lem:gscomp}, we hence have \((gs)^2 = -I_2\) and so \((gs)^2\in C\) by the explicit constructions in Lemma~\ref{lem:H0toH}.
  Further, if \(h_1gs, h_2gs\in C'\) with \(h_1,h_2\in C\), then \(h_1gsh_2gs\in H\).
  If \(C = Z(H) = \mu_d I_2\), then \[h_1gsh_2gs = h_1\overline{\det(h_2)}h_2gsgs = -\overline{\det(h_2)}h_1h_2 \in C.\]
  In the other case, we conclude via \(\det(h_1gsh_2gs) = \frac{\det(h_1)}{\det(h_2)}\) that \(h_1gsh_2gs\in C\).
  Therefore, every element in \(C'\) can be written as \(h(gs)^l\) with \(h\in C\) and \(l\in\{0, 1\}\), so \(|{C'}| = 2|{C}|\).
  From this we conclude that \(C'\cap H = C\) and hence \(C'\cap P = \{1\}\).
  So \(N_G(P) = P\rtimes C'\) and \(C'\) is a complement of \(P\) in \(N_G(P)\).
\end{proof}

\begin{prop}
  \label{prop:compp2}
  Let \(P\leq G\) be a parabolic subgroup of type~\ref{paratype:b} and let \(H_1\leq H\) be the subgroup consisting of all elements of determinant 1.
  Then the normalizer of \(P\) is \(N_G(P) = P\rtimes H_1\).
  In particular, \(P\) has a complement in its normalizer.
\end{prop}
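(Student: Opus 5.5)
Since $P = \langle (\zeta I_2)s\rangle$ has order 2 (Lemma~\ref{lem:paratype}), $N_G(P)$ equals the centralizer $C_G(t)$ of $t := (\zeta I_2)s$. The plan is to compute this centralizer explicitly using Lemma~\ref{lem:sconj}. We then check that $H_1$ lies in it, meets $P$ trivially, and has the right index.

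\emph{Step 1: $H_1 \le N_G(P)$.} For $h\in H$, Lemma~\ref{lem:sconj} gives $hsh^{-1} = h\,\overline{\det(h^{-1})}h^{-1}s = \det(h)\,s$, using that $\det(h)$ is a root of unity. Since $\zeta I_2$ is central in $H$, we get $h t h^{-1} = \det(h)\,t$. Hence $h\in H$ centralizes $t$ if and only if $\det(h)=1$, so $N_H(P) = H_1$. Moreover $H_1\cap P = \{1\}$, because $P\cap H = \{1\}$: the element $t$ lies in $Hs$, not in $H$.

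\emph{Step 2: $N_G(P) = P H_1$.} Every element of $G$ is $h$ or $hs$ with $h\in H$, so $G = H \sqcup Ht$ (note $Hs = Ht$). If $ht$ normalizes $P$, then, since $t\in N_G(P)$, also $h = (ht)t^{-1}\in N_G(P)$, and therefore $h\in H_1$ by Step 1. Thus $N_G(P) = H_1 \sqcup H_1 t = H_1 P$. Because $|P|=2$ and $H_1\cap P=1$, we obtain $|N_G(P)| = 2|H_1|$ and $N_G(P) = P\cdot H_1$.

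\emph{Step 3: semidirect structure.} It remains to see that $H_1$ is normal in $N_G(P)$, so that $N_G(P) = P\rtimes H_1$. By Lemma~\ref{lem:sconj}, conjugation by $s$ sends $h\in H_1$ to $\overline{\det h}\,h = h$, and conjugation by the scalar $\zeta I_2$ is trivial. So $t$ centralizes $H_1$, and in fact $N_G(P) = P\times H_1$, which is more than enough. Since $P$ is central in $N_G(P)$, the semidirect product notation $P\rtimes H_1$ is consistent (here it is even a direct product), and $H_1$ is a complement to $P$.

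\emph{Where care is needed.} The only delicate point is Step 1: the conjugation formula $hsh^{-1} = \det(h)s$ requires $h\in\GU_2(\BBC)$ and the identity $\overline{\det h^{-1}} = \det h$. This holds because $H\le\GU_2(\BBC)$ and $\det h$ has modulus one. Everything else is bookkeeping.
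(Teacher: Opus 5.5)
Your proof is correct and follows the paper's argument: you compute $hth^{-1} = \det(h)\,t$ for $h\in H$ via Lemma~\ref{lem:sconj}, and you reduce the coset $Ht$ to $H$ using $t\in N_G(P)$. In Step~3, note that $P\rtimes H_1$ only requires $P$ to be normal, which is automatic, so normality of $H_1$ is not actually needed; your observation that the product is in fact direct is nevertheless correct.
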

\begin{proof}
  Let \((\zeta I_2) s\in P\) be the non-trivial element of \(P\) with a root of unity \(\zeta\in \mu_d\).
  For \(g\in G\), we have either \(g\in H\) or \(gs\in H\).
  Because \((\zeta I_2) s\in N_G(P)\), we may assume without loss of generality that \(g\in H\).
  Lemma~\ref{lem:sconj} yields \[g(\zeta I_2)sg^{-1} = (\zeta I_2)s\overline{\det(g)}gg^{-1} = (\zeta I_2) s\overline{\det(g)}.\]
  Hence \(g\in N_G(P)\cap H\) if and only if \(g\in H_1\).
  Because \(G = H\times \langle s\rangle\), it follows immediately that \(N_G(P) = P\rtimes H_1\).
\end{proof}

\begin{remark}
  \label{rem:possh1}
  The group \(H_1\) in Proposition~\ref{prop:compp2} only depends on the complex reflection group \(H_0\).
  There are only three possibilities for \(H_1\), namely \(\mathsf T, \mathsf O,\mathsf I\leq\SL_2(\BBC)\).
  The precise group can be derived from \(H_0\) with the table in \cite[(3.6)]{Coh76}.
\end{remark}

\begin{remark}
  In \cite[Lem.~5.1]{MT18}, it is proved that for a rank 1 parabolic subgroup of a complex reflection group, the normalizer coincides with the centralizer.
  This is false in general for quaternionic reflection groups; the proof in \cite{MT18} does not work over a skew field.
  For example, let \(P = \langle r\rangle\) be a parabolic subgroup of isomorphism type \(C_4\) in \(G = E(G_9)\).
  Write \(r^G\) for the conjugacy class of \(r\) in \(G\).
  We have \(|{G}| = 2|{G_9}|\) and \(|{r^G}| = 2|{r^H}|\) by Lemma~\ref{lem:reflconj}.
  So, \(|{C_G(P)}| = |{C_H(P)}|\).
  On the other hand, \(|{P^G}| = 6 = |{P^H}|\) and this implies \(|{N_G(P)}| = 2|{N_H(P)}|\).
  We have \(N_H(P) = C_H(P)\) by \cite[Lem.~5.1]{MT18} and so \(N_G(P) \neq C_G(P)\).
\end{remark}

\section{Primitive groups with primitive complexification}
\label{sec:primprim}

There are 13 irreducible quaternionic reflection groups of rank at least 2 which are primitive and have a primitive complexification, see \cite[Thm.~4.2]{Coh80}.
These groups are denoted by \(W(X)\), where \(X\) is a quaternionic root system.
The root systems are labelled by the capital letters \(O\) to \(U\) with some indices.

We list the parabolic subgroups of these groups in the tables in Section~\ref{sec:tabsprim} and give a complement in the normalizer, if such a complement exists.
We may summarize the existence of such a complement as follows.

\begin{prop}
  \label{prop:primprim}
  Let \(G\) be a primitive, irreducible quaternionic reflection group with primitive complexification of rank at least 2 and let \(P\leq G\) be a parabolic subgroup.
  Then \(P\) has a complement in its normalizer \(N_G(P)\) if and only if the pair \((G, P)\) is not one of the following:
\[(W(Q), G(4, 2, 2)),\ (W(R), \mathsf C_2),\ (W(S_3), \mathsf C_2),\ (W(U), \mathsf C_2\times \mathsf C_2),\ (W(U), \mathsf C_2\times\mathsf C_2\times\mathsf C_2).\]
\end{prop}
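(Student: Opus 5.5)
This will be a finite, computer-assisted verification over the 13 groups \(W(X)\) of \cite[Thm.~4.2]{Coh80}, in the spirit of the tables in Section~\ref{sec:tabsprim}. For each \(G = W(X)\) I take explicit quaternionic matrix generators from the root systems of \cite{Coh80}. I then realise \(G\) as a permutation group or as a matrix group over a number field via complexification \(G\leq \Sp(V|_\BBC)\), so that the computations can be done in GAP or Magma.

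The first step is to enumerate the conjugacy classes of parabolic subgroups. By Theorem~\ref{thm:steinberg}, every parabolic subgroup is generated by the reflections it contains, and it equals \(G_X\) for \(X = \Fix_V(P)\). So I will run over \(G\)-orbits of subsets of reflections, or equivalently orbits of intersections of reflecting hyperplanes, which is the lattice of fixed spaces. For each such intersection \(X\) I form \(G_X\) and keep one representative per conjugacy class. As a consistency check, the class lengths times \(|P|\) should match the counts of fixed subspaces of each dimension.

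For each representative \(P\) I compute \(N = N_G(P)\) and decide whether \(P\) has a complement in \(N\). Existence is witnessed by exhibiting a subgroup \(C\) with \(|C| = |N/P|\) and \(C\cap P = 1\); the tables list such a \(C\), together with the action of \(C\) on \(\Fix_V(P)\). When no obvious candidate exists, I search the conjugacy classes of subgroups of \(N\) of order \(|N/P|\), or use a cohomological complement routine. If the quotient \(N/P\) is small or \(P\) is abelian, the latter amounts to checking whether the extension class in \(H^2(N/P, Z(P))\) vanishes.

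The main obstacle is certifying the five non-existence cases, and also making the search feasible for large groups such as \(W(U)\), whose order is huge. For non-existence I plan to give a conceptual witness where possible, in the manner of Lemma~\ref{lem:nosuchmap}. For \(P\cong \mathsf C_2\), \(\mathsf C_2\times\mathsf C_2\), \(\mathsf C_2^3\) or \(G(4,2,2)\), I would look for an element \(x\in N\setminus P\) whose image in \(N/P\) has order \(k\) while every lift of it has order \(2k\). Equivalently, some Sylow \(2\)-subgroup of \(N\) already has no complement to \(P\), which reduces the question to a \(2\)-group computation by Gaschütz's theorem. This restriction to Sylow \(2\)-subgroups is what I would try first for certifying non-splitting. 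It also keeps the computations small in the large groups.
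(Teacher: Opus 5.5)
Your overall strategy is the paper's own: machine enumeration of the classes of parabolic subgroups, their normalizers and their complements. The tables in Section~\ref{sec:tabsprim} were produced this way in OSCAR. The gap is in the step you yourself call the main obstacle. Your non-splitting certificates do not work for \((W(Q), G(4,2,2))\), and the Gaschütz reduction you invoke would give the \emph{wrong} answer there.

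\textbf{The Sylow test.} Gaschütz's theorem requires the normal subgroup to be abelian, and \(G(4,2,2)\) is not. So only the implication ``no complement in a Sylow \(2\)-subgroup \(\Rightarrow\) no complement in \(N\)'' is available, and here it is inconclusive. We have \(|N| = 192\) and \(|P| = 16\), and \(N' = N_{G(4,2,3)}(P)\), of order \(64\), is a Sylow \(2\)-subgroup of \(N\). By \cite{MT18}, \(N'\) \emph{does} contain a complement of \(P\).

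\textbf{The order-doubling test.} This fails as well. The map \(x\mapsto x_{1,1}\) identifies \(N/P\) with \(\langle\omega,\bfi\rangle\cong\mathsf D_3\). Every element of \(N/P\) has a lift of the same order, obtained from the following elements and their conjugates:
\(z=\diag(-1,1,1)\), the reflection with root \((1,0,0)\), which is central in \(N\);
the element \(g\) of order \(3\) from the paper's Example;
\(gz\), of order \(6\);
an order-\(4\) generator \(h\) of a complement in \(N'\).
In particular, your ``equivalently'' linking the two tests is unfounded.

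\textbf{The actual obstruction.} It is not \(2\)-local. In a complement \(C\cong\mathsf D_3\), the unique involution is \(h^2\), where \(h\) is the order-\(4\) generator of \(C\cap N'\). Such \(h\) is necessarily diagonal, since the non-diagonal lifts have order \(8\). Hence this involution is \(\diag(-1,-1,1)\) or \(\diag(-1,1,-1)\), and it must commute with the elements of order \(3\) in \(C\). But every element of order \(3\) in \(N\) is \(gp\) or \(g^2p\) with \(p\in P\). Its \(2\times 2\) block has non-zero off-diagonal entries, and conjugation by such an involution negates them. This is exactly the paper's hand argument.

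So for this pair you need either this kind of argument or an exhaustive search over subgroups of order \(12\) in \(N\), which is trivial at this size. The Sylow reduction is a genuine equivalence only in the four abelian cases \(\mathsf C_2,\mathsf C_2^2,\mathsf C_2^3\).

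\textbf{Two minor slips.} First, \(N_G(P)\) is the setwise stabilizer of \(\Fix_V(P)\). Hence the class length \(|P^G|\) itself, not \(|P^G|\cdot|P|\), equals the length of the \(G\)-orbit of \(\Fix_V(P)\). Second, for non-abelian \(P\), splitting is not the vanishing of a class in \(H^2(N/P, Z(P))\).
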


\subsection{An example}

We study one of the parabolic subgroups that does not admit a complement in its normalizer in detail.
Additionally, we provide a computer-free argument showing that there is no such complement.
\begin{example}
  \textbf{Construction of the group.}
  Let \(G = W(Q)\).
  All reflections in \(G\) are of order 2 and we may describe them via the quaternionic lines that are orthogonal to the corresponding reflection hyperplanes.
  These root lines are given in \cite[Table~II]{Coh80} as follows.
  Let \(V = \BBH^3\).
  We have the vectors \((1, 0, 0)\), \((0, 1, 0)\) and \((0, 0, 1)\) as well as the 12 vectors
  \[(1, \pm1, 0), (1, \pm\bfi, 0), (1, 0, \pm1), (1, 0, \pm\bfi), (0, 1, \pm1), (0, 1, \pm\bfi).\]
  The corresponding reflections generate the reflection group \(G(4, 2, 3)\) acting on \(V\).

  Let \(\alpha = \frac{1}{2}(1 - \bfi - \bfj - \sqrt 5 \bfk)\).
  Then the root system \(Q\) additionally contains the 48 vectors of the form \((a, b, c \alpha)\) and all permutations of the coordinates of this with \(a, b, c\in \langle\bfi\rangle\) and \(abc = 1\).
  We have \(|{Q}| = 63\) in total, the reflections corresponding to the roots in \(Q\) generate the group \(G = W(Q)\) and every reflection in \(G\) stems from one of the listed roots, see \cite[Rem.~4.3~(ii)]{Coh80}.
  The order of \(G\) is \(|{G}| = 12096\), see \cite[Table~III]{Coh80}.

  \textbf{The normalizer.}
  Let \(P\leq G\) be the stabilizer of the vector space spanned by \((1, 0, 0)\).
  Then \(P\) is a reflection group \cite{BST23} and contains the reflections with roots \[(0, 1, 0), (0, 0, 1), (0, 1, 1), (0, 1, \bfi), (0, 1, -1), (0, 1, -\bfi).\]
  That is, \(P\) is isomorphic (as a reflection group) to the group \(G(4, 2, 2)\) acting on the space \(\langle (0, 1, 0), (0, 0, 1)\rangle\) and \(|{P}| = 16\).
  For any two root lines \(v_1\) and \(v_2\), there is an element \(g\in G\) with \(g.v_1 = v_2\).
  Hence the stabilizer of any root line is conjugate to \(P\) in \(G\).
  Therefore the conjugacy class of \(P\) contains at least 63 groups and we obtain \(|{N_G(P)}| \leq |{G}| / 63 = 192\) as upper bound for the order of the normalizer of \(P\) in \(G\).
  We have the inclusions \[P\leq G(4, 2, 3) \leq G\] and hence \(P\leq N_{G(4, 2, 3)}(P) \leq N_G(P)\).
  Write \[N' := N_{G(4, 2, 3)}(P)\text{ and }N := N_G(P).\]
  The elements of \(N'\) are those matrices in \(G(4, 2, 3)\) that leave the block structure given by \(\Fix(P) \oplus \Fix(P)^\perp = \langle (1, 0, 0)\rangle\oplus \langle (0, 1, 0), (0, 0, 1)\rangle\) invariant, so we have \(|{N'}| = 64\).

  The group \(N\) is strictly larger than \(N'\).
  Indeed, consider \[g = \frac{1}{2}\begin{pmatrix} -2 + \alpha + \alpha \bfi & 0 & 0 \\ 0 & -\bfi - 1 & \bfi + 1\\ 0 & \bfi - 1 & \bfi - 1 \end{pmatrix}.\]
    We have \(g\in G\) because \(g\) is the product of the reflections corresponding to the root lines \((1, \bfi \alpha, -\bfi)\), \((\bfi, \bfi, \alpha)\), \((\bfi, -\bfi, \alpha)\), \((\bfi, 1, \bfi \alpha)\).
  Clearly, \(g\in N\) because \(g\) leaves the decomposition \(\Fix(P)\oplus \Fix(P)^\perp\) invariant.
  Further, one computes \(\ord_G(g) = 3\), so \(g\notin N'\).
  In particular, we have \(|{\langle N', g\rangle}| \geq 192\), so \(N = \langle N', g\rangle\) and \(|{N}| = 192\).

  \textbf{Elements of order 3.}
  In order to show that \(P\) does not have a complement in \(N\), we require more information on the elements of order 3 in \(N\).
  Because \(3\nmid |{N'}|\), we have \(N = \langle g\rangle N'\) with \(g\) as above.
  Let \(\omega = g_{1, 1}\), the \((1, 1)\)-entry of the matrix \(g\), and let \(h\in N\).
  Then there is an \(n\in N'\) and a \(k\in\{0, 1, 2\}\) with \(h = g^k n\), so \(h_{1, 1} = \omega^kn_{1, 1}\) with \(n_{1, 1}\in \langle \bfi\rangle\).
  Assume \(\ord_G(h) = 3\), hence in particular \(h_{1, 1}^3 = 1\).
  We cannot have \(k = 0\), because then \(h\in N'\) and \(3\nmid |{N'}|\).
  One checks that \(\ord_{\BBH^\times}(-\omega^k) = 6\) and \(\ord_{\BBH^\times}(\omega^k(\pm \bfi)) = 4\) for \(k = 1, 2\).
  We conclude \(h_{1, 1} = \omega^k\) and \(n\in P\).
  Therefore, any element in \(N\) of order 3 is of the form \(gp\) or \(g^2p\) for some \(p\in P\).

  \textbf{Non-existence of a complement.}
  Assume there is a complement \(C\leq N\) of \(P\) in \(N\).
  Then \(C' \coloneqq C\cap N'\) is a complement of \(P\) in \(N'\).
  By \cite[Thm.~3.12]{MT18}, the group \(C'\) acts as reflection group \(\mathsf C_4\) on \(\Fix(P)\).
  Let \(h\in C'\) be a generator of \(C'\).
  We may assume without loss of generality that \(h_{1, 1} = \bfi\) and so the possibilities for \(h\) are of the form \[\begin{psmallmatrix} \bfi & & \\ & a & \\ & & b\end{psmallmatrix}\text{ or }\begin{psmallmatrix}\bfi & & \\ & & a\\ b & & \end{psmallmatrix}\] with \(a, b\in\langle \bfi\rangle\) and \(ab = \pm \bfi\).
  The matrices of the second (non-diagonal) form are of order 8, so \(h\) must be a diagonal matrix.
  In particular, the square of \(h\) is \(\diag(-1, -1, 1)\) or \(\diag(-1, 1, -1)\).
  We assume without loss of generality that \[h^2 = \diag(-1, -1, 1) \in C.\]

  By construction, we have \(|{C}| = |{N}|/|{P}| = 12\) and \(C\) must contain an element \(\tilde g\in N\) of order 3.
  Recall from above that \(\tilde g_{1, 1} \in\{\omega, \omega^2\}\) and assume without loss of generality that \(\tilde g_{1, 1} = \omega\).
  The element \(\tilde gh^2\) must have order divisible by 6, because \((\tilde gh^2)_{1, 1} = -\omega\).

  Assume that \(\ord_G(\tilde gh^2) = 6\).
  We show that \(\langle \tilde g\rangle\cap\langle \tilde gh^2\rangle = \{1\}\).
  Assume \(\tilde g^k = (\tilde g h^2)^l\) for some choice of \(k = 1, 2\) and \(l = 1,\dots, 5\).
  From the entry \[(\tilde g^k)_{1, 1} = \omega^k = (-\omega)^l = ((\tilde gh^2)^l)_{1, 1},\] we see that the only permissible options are \((k, l)\in\{(1, 4), (2, 2)\}\).
  Both choices of \(k\) and \(l\) give the same relation \(\tilde g^2 = (\tilde g h^2)^2\) (after squaring in the case \(k = 1\)) and this results in \(\tilde g = h^2\tilde gh^2\).
  Because \(\tilde g\in N\), there are \(a, b, c, d\in \BBH\) with \(g = \diag(\omega,\begin{psmallmatrix} a & b \\ c & d\end{psmallmatrix})\) and one computes \(h^2\tilde gh^2 = \diag(\omega, \begin{psmallmatrix} a & -b \\ -c & d\end{psmallmatrix})\).
  We have \(b\neq 0\neq c\) because \(\tilde g = gp\) for some \(p\in P\), so the above choices for \(k\) and \(l\) are in fact not permissible either.
  Hence we have \(\langle \tilde g\rangle\cap\langle \tilde gh^2\rangle = \{1\}\) and in particular \(|{C}| \geq 3\cdot 6 = 18\), a contradiction.

  Finally, assume that \(\ord_G(\tilde gh^2) = 12\).
  In particular, the group \(C\) is cyclic and we must have \(h = (\tilde gh^2)^l\) for some \(0\leq l\leq 11\).
  But this is not possible since \(((\tilde gh^2)^l)_{1, 1} \neq \bfi\) for all \(l\).
  In conclusion, \(P\) does not have a complement in \(N\).
\end{example}

\subsection{Tables of parabolic subgroups}
\label{sec:tabsprim}

The non-trivial parabolic subgroups and their complements for the primitive quaternionic reflection groups \(G\) with primitive complexification are listed in Tables \ref{tab:wop}, \ref{tab:wqr}, \ref{tab:ws}, \ref{tab:wt} and \ref{tab:wu}.
All data was computed using the computer algebra system OSCAR \cite{Osc26, Dec+25}.
The code to reproduce the results in the tables can be found at \url{https://gitlab.com/math5724907/normalizersofparabolics} together with precomputed parabolic subgroups that can be loaded in OSCAR.

The columns of the tables contain the following information.
\begin{itemize}
  \item \(P\): a parabolic subgroup of \(G\) with pointwise fixed space \(U\).
  \item \(|{P^G}|\): the length of the conjugacy class of \(P\) in \(G\).
  \item rank: the rank of \(P\), that is, the dimension of the complement of \(U\).
  \item \(Q\): the parabolic subgroup of \(G\) fixing the orthogonal complement \(U^\perp\).
  \item \(C\): a complement of \(P\) in \(N_G(P)\) (described in relation to \(C^\circ\) or \(Q\)).
  \item \(C^\circ\): the largest subgroup of \(C\) acting on \(U\) by reflections.
\end{itemize}

The groups are written in the various notations established in earlier sections.
To summarize:
\begin{itemize}
  \item \(\mathsf C_d\), \(\mathsf D_d\), \(\mathsf T\), \(\mathsf O\) and \(\mathsf I\) are the Kleinian groups as in Notation~\ref{not:kleinian}.
  \item \(G(m, p, n)\) and \(G_k\) denote complex reflection groups following \cite{ST54}.
  \item \(\mathfrak S_n\) denotes the symmetric group acting irreducibly on an \((n - 1)\)-dimensional space.
  \item \(G_n(K, H)\) denotes an imprimitive group as described in Section~\ref{sec:imprim}.
  \item \(G(K, H, \phi)\) denotes an `exceptional' imprimitive group of rank 2, see Section~\ref{sec:imprimexcep}.
  \item \(E(H)\) denotes a primitive group with imprimitive complexification as described in Section~\ref{sec:primimprim}.
  \item \(W(X)\) denotes a primitive group with primitive complexification corresponding to the root system \(X\), see \cite[Sect.~4]{Coh80}.
\end{itemize}

\begin{table}[b]
  \caption{Non-trivial parabolic subgroups of the rank 2 primitive groups \(G\).}
  \label{tab:wop}
  \def\arraystretch{1.2}
  \begin{tabular}{c|ccc|ccc}
    \(G\) & \(P\) & \(|{P^G}|\) & rank & \(Q\) & \(C\) & \(C^\circ\) \\
    \hline
    \(W(O_1)\) & \(\mathsf C_3\) & 10 & 1 &\(1\) & \(C^\circ\) & \(\mathsf C_4\) \\
    \hline
    \(W(O_2)\) & \(\mathsf C_3\) & 20 & 1 &\(\mathsf C_3\) & \(C^\circ\) & \(\mathsf D_3\) \\
    \hline
    \(W(O_3)\) & \(\mathsf C_2\) & 30 & 1 & \(\mathsf C_2\) & \(C^\circ\) & \(\mathsf T\) \\
               & \(\mathsf C_3\) & 20 & 1 & \(\mathsf C_3\) & \(C^\circ\) & \(\mathsf D_6\) \\
    \hline
    \(W(P_1)\) & \(\mathsf C_4\) & 10 & 1 & \(\mathsf C_4\) & \(C^\circ\) & \(\mathsf D_2\) \\
    \hline
    \(W(P_2)\) & \(\mathsf D_2\) & 10 & 1 & \(\mathsf D_2\) & \(C^\circ\) & \(\mathsf T\) \\
    \hline
    \(W(P_3)\) & \(\mathsf C_2\) & 40 & 1 & \(\mathsf C_2\) & \(C^\circ\) & \(\mathsf O\) \\
               & \(\mathsf D_2\) & 10 & 1 & \(\mathsf D_2\) & \(C^\circ\) & \(\mathsf O\)
  \end{tabular}
\end{table}

\begin{table}[b]
  \caption{Non-trivial parabolic subgroups of the rank 3 primitive groups \(G\).}
  \label{tab:wqr}
  \def\arraystretch{1.2}
  \begin{tabular}{c|ccc|ccc}
    \(G\) & \(P\) & \(|{P^G}|\) & rank & \(Q\) & \(C\) & \(C^\circ\) \\
    \hline
    \(W(Q)\) & \(\mathsf C_2\) & 63 & 1 & \(G(4, 2, 2)\) & \(C^\circ\) & \(G_8\) \\
     & \(G(3, 3, 2)\) & 336 & 2 & 1 & \(C^\circ\) & \(\mathsf C_6\) \\
     & \(G(4, 2, 2)\) & 63 & 2 & \(\mathsf C_2\) & (none) & (none) \\
    \hline
    \(W(R)\) & \(\mathsf C_2\) & 315 & 1 & \(G_2(\mathsf D_2, \mathsf C_2)\) & (none) & (none) \\
     & \(G(3, 3, 2)\) & 8400 & 2 & 1 & \(C^\circ\) & \(\mathsf T\) \\
     & \(G(5, 5, 2)\) & 1008 & 2 & 1 & \(C^\circ\) & \(\mathsf I\) \\
     & \(G_2(\mathsf D_2, \mathsf C_2)\) & 315 & 2 & \(\mathsf C_2\) & \(C^\circ\) & \(\mathsf I\)
  \end{tabular}
\end{table}

\begin{table}
  \caption{Non-trivial parabolic subgroups of the groups \(W(S_i)\).}
  \label{tab:ws}
  \def\arraystretch{1.2}
  \begin{tabular}{c|ccc|ccc}
    \(G\) & \(P\) & \(|{P^G}|\) & rank & \(Q\) & \(C\) & \(C^\circ\) \\
    \hline
    \(W(S_1)\) & \(\mathsf C_2\) & 36 & 1 & \(\mathsf C_2^3\) & Extension of \(A_4\) by \(Q\) & \(Q\) \\
    & \(\mathsf C_2^2\) & 54 & 2 & \(\mathsf C_2^2\) & \(C^\circ\) & \(G_2(\mathsf D_2, \mathsf C_2)\) \\
    & \(G(3, 3, 2)\) & 48 & 2 & 1 & \(C^\circ\) & \(G_4\) \\
    & \(G(3, 3, 2)\) & 48 & 2 & 1 & \(C^\circ\) & \(G_4\) \\
    & \(G(3, 3, 2)\) & 48 & 2 & 1 & \(C^\circ\) & \(G_4\) \\
    & \(G(3, 3, 2)\) & 48 & 2 & 1 & \(C^\circ\) & \(G_4\) \\
    & \(\mathsf C_2^3\) & 36 & 3 & \(\mathsf C_2\) & \(C^\circ\) & \(\mathsf T\) \\
    & \(G(2, 2, 3)\) & 36 & 3 & 1 & \(C^\circ\) & \(\mathsf D_2\)\\
    & \(G(2, 2, 3)\) & 36 & 3 & 1 & \(C^\circ\) & \(\mathsf D_2\)\\
    & \(G(2, 2, 3)\) & 36 & 3 & 1 & \(C^\circ\) & \(\mathsf D_2\)\\
    & \(G(2, 2, 3)\) & 36 & 3 & 1 & \(C^\circ\) & \(\mathsf D_2\)\\
    & \(G(3, 3, 3)\) & 64 & 3 & 1 & \(C^\circ\) & \(\mathsf C_2\)\\
    \hline
    \(W(S_2)\) & \(\mathsf C_2\) & 72 & 1 & \(G(2, 1, 3)\) & Extension of \(A_4\) by \(Q\) & \(Q\) \\
     & \(\mathsf C_2^2\) & 216 & 2 & \(\mathsf C_2^2\) & \(C^\circ\) & \(G(\mathsf T, \mathsf C_2, \rho(\gamma))\)\\
     & \(G(3, 3, 2)\) & 96 & 2 & \(G(3, 3, 2)\) & \(C^\circ\) & \(E(G_5)\)\\
     & \(G(3, 3, 2)\) & 576 & 2 & 1 & \(C^\circ\) & \(G_4\) \\
     & \(G(2, 1, 2)\) & 54 & 2 & \(G(2, 1, 2)\) & \(C^\circ\) & \(E(G_8)\) \\
     & \(\mathsf C_2 \times G(3, 3, 2)\) & 288 & 3 & 1 & \(C^\circ\) & \(\mathsf T\) \\
     & \(G(2, 2, 3)\) & 864 & 3 & 1 & \(C^\circ\) & \(\mathsf C_4\) \\
     & \(G(2, 1, 3)\) & 72 & 3 & \(\mathsf C_2\) & \(C^\circ\) & \(\mathsf T\) \\
     & \(G(3, 3, 3)\) & 256 & 3 & 1 & \(C^\circ\) & \(\mathsf C_6\) \\
     & \(G(4, 4, 3)\) & 108 & 3 & 1 & \(C^\circ\) & \(\mathsf D_2\) \\
    \hline
    \(W(S_3)\) & \(\mathsf C_2\) & 180 & 1 & \(G_3(\mathsf D_2, \mathsf C_2)\) & (none)  & (none) \\
    & \(\mathsf C_2^2\) & 2160 & 2 & \(\mathsf C_2^2\) & \(C^\circ\) & \(G_2(\mathsf T, \mathsf D_2)\) \\
    & \(G(3, 3, 2)\) & 3840 & 2 & \(G(3, 3, 2)\) & \(C^\circ\) & \(E(G_5)\) \\
    & \(G_2(\mathsf D_2, \mathsf C_2)\) & 54 & 2 & \(G_2(\mathsf D_2, \mathsf C_2)\) & \(C^\circ\) & \(W(P_2)\) \\
    & \(\mathsf C_2\times G(3, 3, 2)\) & 11520 & 3 & 1 & \(C^\circ\) & \(\mathsf T\) \\
    & \(G(2, 2, 3)\) & 17280 & 3 & 1 & \(C^\circ\) & \(\mathsf D_2\) \\
    & \(G(3, 3, 3)\) & 2560 & 3 & 1 & \(C^\circ\) & \(\mathsf T\) \\
    & \(G_3(\mathsf D_2, \mathsf C_2)\) & 180 & 3 & \(\mathsf C_2\) & \(C^\circ\) & \(\mathsf T\)
  \end{tabular}
\end{table}

\begin{table}
  \caption{Non-trivial parabolic subgroups of \(W(T)\).}
  \label{tab:wt}
  \def\arraystretch{1.2}
  \begin{tabular}{c|ccc|ccc}
    \(G\) & \(P\) & \(|{P^G}|\) & rank & \(Q\) & \(C\) & \(C^\circ\) \\
    \hline
    \(W(T)\) & \(\mathsf C_2\) & 180 & 1 & \(G_{23}\) & Extension of \(A_5\) by \(Q\) & \(Q\) \\
    & \(\mathsf C_2^2\) & 1350 & 2 & \(\mathsf C_2^2\) & \(C^\circ\) & \(G(\mathsf I, \mathsf C_2, \id)\)\\
    & \(G(3, 3, 2)\) & 3600 & 2 & 1 & \(C^\circ\) & \(W(O_1)\) \\
    & \(G(3, 3, 2)\) & 600 & 2 & \(G(3, 3, 2)\) & \(C^\circ\) & \(E(G_{20})\) \\
    & \(G(5, 5, 2)\) & 216 & 2 & \(G(5, 5, 2)\) & \(C^\circ\) & \(E(G_{16})\)  \\
    & \(\mathsf C_2\times G(3, 3, 2)\) & 1800 & 3 & 1 & \(C^\circ\) & \(\mathsf I\) \\
    & \(\mathsf C_2\times G(5, 5, 2)\) & 1080 & 3 & 1 & \(C^\circ\) & \(\mathsf I\) \\
    & \(G(2, 2, 3)\) & 13500 & 3 & 1 & \(C^\circ\) & \(\mathsf D_2\)  \\
    & \(G(2, 2, 3)\) & 900 & 3 & 1 & \(C^\circ\) & \(\mathsf I\)  \\
    & \(G(3, 3, 3)\) & 4000 & 3 & 1 & \(C^\circ\) & \(\mathsf D_3\)  \\
    & \(G_{23}\) & 180 & 3 & \(\mathsf C_2\) & \(C^\circ\) & \(\mathsf I\) \\
    & \(G(5, 5, 3)\) & 864 & 3 & 1 & \(C^\circ\) & \(\mathsf D_5\)
  \end{tabular}
\end{table}

\begin{table}
  \caption{Non-trivial parabolic subgroups of \(W(U)\).}
  \label{tab:wu}
  \def\arraystretch{1.2}
  \begin{tabular}{c|ccc|ccc}
    \(G\) & \(P\) & \(|{P^G}|\) & rank & \(Q\) & \(C\) & \(C^\circ\) \\
    \hline
    \(W(U)\) & \(\mathsf C_2\) & 165 & 1 & \(W(S_1)\) & Extension of \(A_4\) by \(Q\) & \(Q\) \\
    & \(\mathsf C_2^2\) & 2970 & 2 & \(\mathsf C_2^3\) & (none) & (none) \\
    & \(G(3, 3, 2)\) & 3520 & 2 & \(G(3, 3, 3)\) & \(C^\circ\) & \(G_{26}\) \\
    & \(\mathsf C_2^3\) & 2970 & 3 & \(\mathsf C_2^2\) & (none) & (none) \\
    & \(\mathsf C_2\times G(3, 3, 2)\) & 31680 & 3 & 1 & \(C^\circ\) & \(G_5\) \\
    & \(G(2, 2, 3)\) & 23760 & 3 & \(\mathsf C_2\) & Extension of \(C_3\) by \(C^\circ\) & \(\mathsf C_2\times \mathsf D_2\) \\
    & \(G(3, 3, 3)\) & 3520 & 3 & \(G(3, 3, 2)\) & \(C^\circ\) & \(E(G_5)\) \\
    & \(\mathsf C_2 \times G(2, 2, 3)\) & 23760 & 4 & 1 & \(C^\circ\) & \(\mathsf T\) \\
    & \(\mathsf C_2\times G(3, 3, 3)\) & 10560 & 4 & 1 & \(C^\circ\) & \(\mathsf T\) \\
    & \(\mathfrak S_5\) & 38016 & 4 & 1 & \(C^\circ\) & \(\mathsf C_6\) \\
    & \(G(3, 3, 4)\) & 7040 & 4 & 1 & \(C^\circ\) & \(\mathsf C_6\) \\
    & \(W(S_1)\) & 165 & 4 & \(\mathsf C_2\) & \(C^\circ\) & \(\mathsf T\)
  \end{tabular}
\end{table}


\clearpage

\begin{thebibliography}{aaaaaaa}

  \bibitem[Bea00]{Bea00}
    A.~Beauville, \emph{Symplectic singularities}, Invent.~Math. \textbf{139} (2000), no.~3, 541--549.

  \bibitem[Bel16]{Bel16}
    G.~Bellamy, \emph{Counting resolutions of symplectic quotient singularities}, Compos.~Math. \textbf{152} (2016), no.~1, 99--114.

  \bibitem[BRS26]{BRS26}
    G.~Bellamy, G.~Röhrle, J.~Schmitt,
    \emph{Namikawa--Weyl groups of symplectic quotient singularities}, 2026,  \url{http://arxiv.org/abs/2607.24158}.

  \bibitem[BST18]{BST18}
    G.~Bellamy, T.~Schedler, U.~Thiel, \emph{Hyperplane arrangements associated to symplectic quotient singularities}, Phenomenological approach to algebraic geometry, Banach Center Publ., vol.~116, Polish Acad. Sci. Inst. Math., Warsaw, 2018, pp. 25--45.

  \bibitem[BST23]{BST23}
    G.~Bellamy, J.~Schmitt, U.~Thiel, \emph{On parabolic subgroups of symplectic reflection groups}, Glasg.~Math.~J. \textbf{65} (2023), no.~2, 401--413.

  \bibitem[BH99]{BH99}
    B.~Brink and R.~B. Howlett, \emph{Normalizers of parabolic subgroups
      in {C}oxeter groups}, Invent. Math. \textbf{136} (1999), 323--351.

  \bibitem[Coh76]{Coh76}
    A.~M.~Cohen,
    \emph{Finite complex reflection groups},
    Ann. Sci. Éc. Norm. Supér. (4) \textbf{9} (1976), no.~3, 379--436.

  \bibitem[Coh80]{Coh80}
    A.~M.~Cohen,
    \emph{Finite quaternionic reflection groups},
    J. Algebra \textbf{64} (1980), no.~2, 293--324.

  \bibitem[Dec+25]{Dec+25}
    W.~Decker, C.~Eder, C.~Fieker, M.~Horn, M.~Joswig (eds.). \emph{The computer algebra system {OSCAR}: algorithms and examples.} 1st ed., vol. 32. Algorithms and Computation in Mathematics.
    Springer, 2025.

  \bibitem[DuV64]{DuV64}
    P. Du~Val, \emph{Homographies, quaternions and rotations}. Oxford Mathematical Monographs, Clarendon Press, Oxford, 1964.

  \bibitem[GRS25]{GRS25}
    L.~Giordani, G.~Röhrle, J.~Schmitt,
    \emph{Invariants in the cohomology of the complement of quaternionic reflection arrangements}, 2025,  \url{http://arxiv.org/abs/2510.27311}.

  \bibitem[How80]{How80}
    R.~B. Howlett, \emph{Normalizers of parabolic subgroups of reflection
    groups}, J. Lond. Math. Soc. (2) \textbf{21} (1980), 62--80.

  \bibitem[MT18]{MT18}
    K.~Muraleedaran, D.~E.~Taylor,
    \emph{Normalisers of parabolic subgroups in finite unitary reflection groups}, J. Algebra \textbf{504} (2018), 479--505.

  \bibitem[Nam10]{Nam10}
    Y.~Namikawa, \emph{Poisson deformations of affine symplectic varieties, II}, Kyoto J.~Math. \textbf{50} (2010), 727--752.

  \bibitem[Osc26]{Osc26}
    \emph{OSCAR -- Open Source Computer Algebra Research system}. Version~1.7.0. The OSCAR Team, 2026, \url{https://www.oscar-system.org}.

  \bibitem[OS82]{OS82}
    P.~Orlik and L.~Solomon,
    \emph{Arrangements defined by unitary reflection groups},
    Math. Ann. \textbf{261} (1982), 339--357.

  \bibitem[OT92]{OT92}
    P.~Orlik and H.~Terao,
    \emph{Arrangements of hyperplanes},
    Springer-Verlag, 1992.

  \bibitem[ST54]{ST54}
    G.~C.~Shephard, J.~A.~Todd,
    \emph{Finite unitary reflection groups}, Canad.~J.~Math. \textbf{6} (1954), 274--304.

  \bibitem[Ste64]{Ste64}
    R.~Steinberg,
    \emph{Differential equations invariant under finite reflection groups},
    Trans. Amer. Math. Soc., \textbf{112} (1964), 392--400.

  \bibitem[Tay25]{Tay25}
    D.~E.~Taylor,
    \emph{Systems of imprimitivity of rank two quaternionic reflection groups}, 2025,
    \url{https://arxiv.org/abs/2510.22134}.

  \bibitem[Wal25]{Wal25}
    S.~Waldron,
    \emph{An elementary classification of the quaternionic reflection groups of rank two}, 2025,
    \url{https://arxiv.org/abs/2509.01849}.
\end{thebibliography}
\end{document}